\documentclass[a4paper, 10pt]{article}
\usepackage[left=4.1cm, right=4.1cm]{geometry}

\usepackage{packageList}
\usepackage{macros}

\usetikzlibrary{arrows}

\graphicspath{{./Figures/}}
\pgfplotsset{compat=1.11}
\newlength\fwidth

\title{Sharp inverse statements for kernel approximation: \\
Superconvergence and saturation} %

\author[1,2]{Tizian Wenzel \thanks{wenzel@math.lmu.de}}
\affil[1]{Deparment of Mathematics, Ludwig Maximilian University of Munich (Munich, Germany)}
\affil[2]{Munich Center for Machine Learning (Munich, Germany)}

\newcommand{\Ht}{\mathcal{H}_{\vartheta}}

\usepackage{comment}
\newif\iflong			
\longfalse

\newif\ifappendix
\appendixtrue

\usepackage{comment}

\usepackage[normalem]{ulem}			%
\usepackage{thm-restate}

\makeatletter
\newcommand*\bigcdot{\mathpalette\bigcdot@{.5}}
\newcommand*\bigcdot@[2]{\mathbin{\vcenter{\hbox{\scalebox{#2}{$\m@th#1\bullet$}}}}}
\makeatother

\begin{document}

\maketitle %

\begin{abstract}
This article establishes sharp inverse and saturation statements for kernel-based approximation using finitely smooth Sobolev kernels on bounded Lipschitz regions.
The analysis focuses on the superconvergence regime, for which direct statements have only recently been obtained.

The resulting theory yields a one-to-one correspondence between the smoothness of a target function -- quantified in terms of power spaces -- and the achievable approximation rates by kernel-based approximation. 
In this way, we extend existing results beyond the escaping-the-native-space regime and provide a unified characterization covering the full scale of admissible smoothness spaces.

\end{abstract}

\section{Introduction} \label{sec:introduction}

We consider kernel-based approximation \cite{wendland2005scattered, fasshauer2007meshfree, fasshauer2015kernel, steinwart2008support} 
and in particular interpolation of continuous functions $f \in \mathcal{C}(\Omega)$ on some bounded region $\Omega \subset \R^d$ for interpolation points $X \subset \Omega$:
\begin{align}
\label{eq:interpolation_conditions}
s_{f, X}(x_i) = f(x_i) \quad \forall x_i \in X.
\end{align}
For this, we consider strictly positive definite kernels $k: \Omega \times \Omega \rightarrow \R$ with corresponding reproducing kernel Hilbert space (RKHS) $\ns$,
which is also called the \emph{native space}.
In this setting,
the minimum-norm kernel interpolant $s_{f, X}$ can be written as
\begin{align}
\label{eq:interpolant_sum}
s_{f, X} = \sum_{j=1}^{|X|} \alpha_j k(\cdot, x_j),
\end{align}
where the $X$-dependent coefficient vector $(\alpha_j)_{j=1}^{|X|}$ is determined by the unique solution of the linear equation system arising from plugging Eq.~\eqref{eq:interpolant_sum} into Eq.~\eqref{eq:interpolation_conditions}.
If $f \in \ns$, then the kernel interpolant $s_{f, X}$ of Eq.~\eqref{eq:interpolation_conditions} can also be expressed as the orthogonal projection $\Pi_{V(X)}(f)$ of $f$ onto $V(X) := \Sp \{ k(\cdot, x_i), x_i \in X \} \subset \ns$, i.e.\
\begin{align}
\label{eq:interpolant_projection}
s_{f, X} = \Pi_{V(X)}(f).
\end{align}

For our analysis, we will be interested in finitely smooth kernels,
whose RKHS is norm-equivalent to a Sobolev space of smoothness $\tau > d/2$,
i.e.\ $\ns \asymp H^\tau(\Omega)$.
Such kernels are frequently used in practice,
as they cover the popular case of translational invariant kernels $k(x, z) = \Phi(x - z)$, 
for which the Fourier transform $\hat{\Phi}$ satisfies
\begin{align}
\label{eq:fourier_decay}
c_\Phi (1+\Vert \omega \Vert_2^2)^{-\tau} \leq \hat{\Phi}(\omega) \leq C_\Phi (1 + \Vert \omega \Vert_2^2)^{-\tau}
\end{align}
for some constants $c_\Phi, C_\Phi > 0$ and $\tau > d/2$.
In particular radial basis function (RBF) kernels like Matérn kernels or Wendland kernels satisfy this property.

Plenty of research has been conducted to bound the approximation error between functions $f \in \mathcal{C}(\Omega)$ (satisfying some additional smoothness assumptions) and the interpolant $s_{f, X}$,
see e.g.\ \cite{narcowich2006sobolev,arcangeli2007extension} using well distributed points,
\cite{wenzel2021novel,wenzel2023analysis} using adaptively (greedily) chosen points
or \cite{krieg2024random} using randomly chosen points.

For such bounds, one distinguishes two regimes, depending on the smoothness of the function $f$.
First, there is the \emph{escaping the native space regime} \cite{narcowich2004scattered,narcowich2006sobolev},
where the smoothness of $f$ is less than the smoothness of the RKHS $\ns$, i.e.\ it holds $f \notin \ns$.
In this case, error bounds read for example
\begin{align}
\label{eq:error_bound_escaping}
\Vert f - s_{f, X} \Vert_{L_2(\Omega)} \leq C h_X^{\vartheta \tau} \cdot \Vert f \Vert_{H^{\vartheta \tau}(\Omega)}
\end{align}
for $f \in H^{\vartheta \tau}(\Omega)$ with $\vartheta \in \left( \frac{d}{2\tau}, 1 \right]$ using quasi-uniform points $X \subset \Omega$.
Here, the quantity $h_X$ denotes the fill distance, see Eq.~\eqref{eq:fill_sep_dist},
and $H^{\vartheta \tau}(\Omega)$ refers to the ordinary Sobolev space of smoothness $\vartheta \tau > d/2$.

Second, there is the so-called \emph{superconvergence regime} \cite{schaback1999improved,schaback2018superconvergence,sloan2025doubling,karvonen2025general},
where the function $f$ possesses additional smoothness properties compared to $\ns$, 
i.e.\ $f$ is included in some particular subspaces.
These subspaces are so-called \emph{power spaces} $\calh_{\vartheta}(\Omega)$ (for $\vartheta > 1$) of the RKHS $\ns$ \cite{steinwart2012mercer}, 
see Eq.~\eqref{eq:power_spaces} for a precise definition.
The resulting convergence rates \cite{karvonen2025general} can be written as 
\begin{align}
\label{eq:error_bound_superconv}
\Vert f - s_{f, X} \Vert_{L_2(\Omega)} \leq C h_X^{\vartheta\tau} \cdot \Vert f \Vert_{\calh_\vartheta(\Omega)}
\end{align}
for functions $f \in \calh_{\vartheta}(\Omega)$ with $\vartheta \in [1, 2]$.
Note that for $\vartheta \in [0, 1]$, the norm equivalence $\ns \asymp H^\tau(\Omega)$ 
directly implies via interpolation the norm equivalence $\calh_\vartheta(\Omega) \asymp H^{\vartheta \tau}(\Omega)$ for $\vartheta \in [0, 1]$,
such that Eq.~\eqref{eq:error_bound_escaping} has actually the same form as Eq.~\eqref{eq:error_bound_superconv}.

Recently, also corresponding sharp \emph{inverse statements} for the escaping the native space regime have been developed \cite{wenzel2025sharp,avesani2025sobolev}.
These statements start with a given convergence rate as in Eq.~\eqref{eq:error_bound_escaping} and allow to conclude the exact smoothness of the approximated function $f$.
Like this, a one-to-one correspondence between smoothness and approximation rate was established.

This article will deal with sharp inverse statements for the superconvergence regime. 
This means, given an approximation rate as in Eq.~\eqref{eq:error_bound_superconv},
we will conclude the smoothness of the approximated function $f$, measured in terms of the power space parameter $\vartheta$.
Since the derived inverse statements are sharp as well,
the aforementioned one-to-one correspondence is extended from the escaping the native space regime to the superconvergence regime.
Deriving also saturation statements, which limit the maximal approximation rate,
a complete characterization of the rates of approximation in terms of the smoothness of a function is derived.
In order to work as general as possible,
we consider general kernel-based approximants, not necessarily interpolants.
An overview of references for the direct and inverse results in both regimes is provided in \Cref{table:overview_results}.
Our main result is precisely given as

\begin{restatable}{theorem}{mainresult}[Main result]
\label{th:main_result}
Consider a compact Lipschitz region $\Omega \subset \R^d$ and a continuous kernel $k$ such that $\ns \asymp H^\tau(\Omega)$ for some $\tau > d/2$.
Consider $f \in \mathcal{C}(\Omega)$ and the estimate 
\begin{align}
\label{eq:inverse_statement_assumption}
\Vert f - s_{f, X} \Vert_{L_2(\Omega)} \leq c_f h_{X}^\beta,
\end{align}
where $s_{f, X} \in \Sp \{ k(\cdot, x), x \in X \}$ is a kernel-based approximant. 
\begin{enumerate}[label=(\roman*)]
\item If Eq.~\eqref{eq:inverse_statement_assumption} holds for some $\beta \in (0, 2\tau - \frac{d}{2}]$ for one sequence $(X_n)_{n \in \N} \subset \Omega$ of point sets satisfying \Cref{ass:points},
then $f \in \mathcal{H}_{\vartheta}(\Omega)$ for all $\vartheta \in [0, \frac{\beta}{\tau})$.
\end{enumerate}
Additionally assuming \Cref{ass:method}, we have
\begin{enumerate}[label=(\roman*)]
\setcounter{enumi}{1}
\item If Eq.~\eqref{eq:inverse_statement_assumption} holds for some $\beta \in [2\tau - \frac{d}{2}, 2]$ 
for any quasi-uniform\footnotemark \ sequence $(X_n)_{n \in \N} \subset \Omega$, 
then $f \in \mathcal{H}_{\vartheta}(\Omega)$ for all $\vartheta \in [0, \frac{\beta}{\tau})$.
\item If Eq.~\eqref{eq:inverse_statement_assumption} holds for some $\beta > 2\tau$ 
for any quasi-uniform\footnotemark[\value{footnote}] sequence $(X_n)_{n \in \N} \subset \Omega$,
then $f = 0$.
\footnotetext{with uniformity constant $\rho = C_\Omega$ (see \Cref{prop:estimate_sep_fill_dist_Z})}

\end{enumerate}
\end{restatable}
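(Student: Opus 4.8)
The plan is to reduce the inverse statement to a quantitative estimate relating the approximation error to the decay of the Fourier coefficients (or power-space expansion coefficients) of $f$, and then to run the argument in three ranges of $\beta$, corresponding to the three cases of the theorem. The common engine should be a \emph{sampling inequality} (Markov/Bernstein-type bound) of the form: for quasi-uniform $X_n$ with fill distance $h_n := h_{X_n}$ and any $g \in H^\tau(\Omega)$, one has a lower bound on $\Vert g - \Pi_{V(X_n)} g\Vert_{L_2}$ in terms of how much $g$ oscillates at scale $h_n$. More precisely, I would use the standard inverse inequality for kernel spaces together with the fact that a function well-approximated at \emph{all} scales $h_n \to 0$ with rate $\beta$ must have its high-frequency content controlled: writing $f = \sum_j \hat f_j \varphi_j$ in the Mercer basis, the assumption $\Vert f - s_{f,X_n}\Vert_{L_2} \le c_f h_n^\beta$ forces tail sums $\sum_{\lambda_j \ge h_n^{-2\tau}} |\hat f_j|^2$ (appropriately weighted) to decay like $h_n^{2\beta}$, which is exactly the statement $f \in \mathcal H_\vartheta(\Omega)$ for $\vartheta < \beta/\tau$.

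For part (i) the argument is purely about the approximation space and needs no structure on the method beyond $s_{f,X} \in \Sp\{k(\cdot,x) : x \in X\}$: the best approximation from $V(X_n)$ is a lower bound for $\Vert f - s_{f,X_n}\Vert_{L_2}$ only after one passes to the interpolant, so instead I would compare $f$ directly against its $L_2$-orthogonal projection and use that any element of $V(X_n)$ lies in $H^\tau$ with norm controlled by an inverse estimate on the fill distance; combining this with \Cref{ass:points} (which guarantees $h_n$ comparable to $n^{-1/d}$ and that the point sets are nested/refining enough) yields the claim for $\beta \in (0, 2\tau - d/2]$. Part (ii) is the delicate range where $\beta$ reaches up to $2$ but $2\tau - d/2$ may be smaller than $2$: here I expect to need \Cref{ass:method} precisely to rule out that the approximant exploits cancellations unavailable to a genuine projection, and the proof should interpolate between the native-space regime ($\vartheta \le 1$, where $\mathcal H_\vartheta \asymp H^{\vartheta\tau}$ and the classical inverse estimates of \cite{wenzel2025sharp} apply) and the superconvergence regime ($\vartheta \in (1,2]$), gluing the two via the power-space scale $\mathcal H_\vartheta$. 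Part (iii) is then a limiting/saturation argument: if the rate exceeds the saturation exponent $2\tau$, the tail-decay estimate forces all expansion coefficients of $f$ to vanish, hence $f = 0$; this is the cleanest of the three once the machinery of (ii) is in place.

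The main obstacle I anticipate is part (ii) at the boundary value $\beta = 2$ (and, more generally, handling $\vartheta$ up to but not including $\beta/\tau$): one must show that the \emph{doubling} phenomenon — the fact that kernel approximation can converge at twice the native-space rate, i.e.\ up to $h^{2\tau}$ — is \emph{exactly} captured by membership in $\mathcal H_2(\Omega)$ and no more, which requires a sharp two-sided control. I would obtain the lower-bound direction by testing the error against carefully localized "bump" functions concentrated near a single point $x \in \Omega$ but oscillating at scale $h_n$: such a test function is nearly orthogonal to $V(X_n)$, so if $f$ had a nonnegligible component at that frequency the error could not decay at rate $\beta$. Making this rigorous on a general bounded Lipschitz domain (rather than a torus) will require the extension operators and interior estimates already cited in the excerpt (\cite{narcowich2006sobolev,arcangeli2007extension}), and patching the local estimates into a global one via a partition of unity subordinate to a covering of $\Omega$ at scale $h_n$. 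Once the sharp local oscillation estimate is available, summing over the covering and letting $n \to \infty$ delivers all three cases uniformly.
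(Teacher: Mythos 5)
Your proposal has genuine gaps at the two places where the actual difficulty sits. First, your ``common engine'' -- that the assumption $\Vert f - s_{f,X_n}\Vert_{L_2(\Omega)} \le c_f h_n^\beta$ forces weighted tail sums of the Mercer coefficients of $f$ to decay like $h_n^{2\beta}$ -- is asserted rather than proved, and the mechanism you propose for it (bump functions oscillating at scale $h_n$ being ``nearly orthogonal to $V(X_n)$'') does not hold: kernel translates are not band-limited, the trial space $\Sp\{k(\cdot,x):x\in X_n\}$ is not comparable to a span of the first eigenfunctions, and a bump localized near a center $x_i\in X_n$ is certainly not nearly orthogonal to $k(\cdot,x_i)$. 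The paper's route is different and avoids this entirely: it applies Bernstein inequalities not to $f$ but to differences $s_{f,X_{n+1}}-s_{f,X_n}$ of consecutive approximants (which do lie in the trial space), shows these form a Cauchy sequence in $\mathcal{H}_\vartheta(\Omega)$ for $\vartheta<\beta/\tau$, and identifies the limit with $f$ in $L_2(\Omega)$. For this, a new superconvergence Bernstein inequality ($\vartheta\in[1,2-\frac{d}{2\tau})$, \Cref{prop:superconv_bernstein}, built on the generalized reproducing property and a spectral comparison of the kernel matrices of $k$ and $k^{(2-\vartheta)}$) is the key lemma, which your sketch never supplies. Moreover, you misplace the obstruction in case (ii): the issue is not a ``sharp two-sided control of the doubling phenomenon'' but the fact that $k(\cdot,x)\notin\mathcal{H}_\vartheta(\Omega)$ once $\vartheta\ge 2-\frac{d}{2\tau}$, so no argument that keeps the approximants in the trial space can ever place the limit in $\mathcal{H}_\vartheta(\Omega)$. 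The paper resolves this by replacing $s_{f,X}$ with elements $TD_{Z_n}(f)\in TL_2(\Omega)$, where the density $D_{Z_n}(f)$ is built from the approximant coefficients over shifted grids $Z_n+b$; this is exactly why \Cref{ass:method} (continuity of coefficients in the centers, needed for the Riemann-sum limits) and the ``for any quasi-uniform sequence'' hypothesis (needed because all shifts $b$ are used) appear -- not, as you suggest, to rule out cancellations of non-projection methods.

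Second, your argument for the saturation statement (iii) is conceptually wrong: arbitrarily fast decay of the Mercer coefficients of $f$ would only make $f$ extremely smooth, not zero -- indeed eigenfunction approximation exhibits no saturation at all, as the paper itself points out, so no argument phrased purely in terms of coefficient tail decay can produce $f=0$. The actual proof shows instead that when $\beta>2\tau$ the coefficient vectors of the approximants must collapse: via the stability bound $\lambda_{\min}(A_X)\ge c_0 q_X^{2\tau-d}$ and the escaping-regime Bernstein inequality one gets $\Vert D_{Z_n}(f)\Vert_{L_2(\Omega)}\le C2^{-n(\beta-2\tau)}\to 0$, hence $TD_{Z_n}(f)\to 0$, while $TD_{Z_n}(f)\to f$, forcing $f=0$. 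This quantitative control of the coefficients (and its transfer to the density functions via Riemann sums) is the missing ingredient your outline would need even to start cases (ii) and (iii).
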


\begin{table}[h!]
\centering
 \begin{tabular}{||c|| c  c||} 
 \hline
 & Escaping the native space & Superconvergence  \\ [0.5ex] 
 \hline\hline
Direct statements & \cite{narcowich2005sobolev,wendland2005approximate,narcowich2006sobolev,arcangeli2007extension} & \cite{schaback1999improved,schaback2018superconvergence,sloan2025doubling,karvonen2025general} \\ 
Inverse statements & \cite{schaback2002inverse,wenzel2025sharp,avesani2025sobolev} & (ours) \\ [1ex] 
 \hline
 \end{tabular}
 \caption{Overview of direct and inverse statements for both the escaping the native space and superconvergence regime.}
 \label{table:overview_results}
\end{table}

Such sharp inverse as well as saturation statements as provided in \Cref{th:main_result} enable a better understanding of kernel-based approximation methods using finitely smooth kernels.
Various potential applications are discussed later on in the article.

The article is structured as follows:
In \Cref{sec:background}, we review necessary background information on kernel-based approximation, provide further details on direct and inverse statements and review power spaces.
\Cref{sec:utility_statements} introduces and proves useful preparatory results for proving the main statement.
\Cref{sec:cont_superconv_inverse} then states and proves the main results on inverse statements for the superconvergence regime, 
and discusses the result and its proof.
Finally, \Cref{sec:conclusion} concludes the paper and provides an outlook.

\section{Background}
\label{sec:background}

In the following, we recall several statements which are necessary to derive the new results as well as helpful to understand the new results in the overall context.

\subsection{Kernel approximation}
\label{subsec:kernel_approx}

We are interested in real-valued strictly positive definite kernels $k$ on sets $\Omega$,
i.e.\ $k: \Omega \times \Omega \rightarrow \R$ is a symmetric function such that the kernel matrix $A_X$ is positive definite for any choice of pairwise distinct points $X \subset \Omega$ \cite{wendland2005scattered}.
For this, the kernel matrix is given by
\begin{align}
\label{eq:kernel_matrix}
A_X := (k(x_i, x_j))_{1 \leq i, j \leq |X|} \in \R^{|X| \times |X|}.
\end{align}
For every strictly positive definite kernel,
there is a unique associated space of functions, the so-called \emph{reproducing kernel Hilbert space} $\ns$ (RKHS),
which is also frequently called \emph{native space}.
This space is characterized by the following two properties
\begin{align}
\label{eq:reproducing_property}
\begin{aligned}
&k(\cdot, x) \in \ns \qquad ~ \qquad \forall x \in \Omega \\
&f(x) = \langle f, k(\cdot, x) \rangle_{\ns} \quad \forall x \in \Omega, ~ \forall f \in \ns \hfill.
\end{aligned}
\end{align}
The second property is called the reproducing property.

We will be mostly interested in kernels,
for which the RKHS $\ns$ is norm-equivalent to a Sobolev space $H^\tau(\Omega)$ with $\tau > d/2$.
This means that the spaces coincide as sets and the norms are equivalent.
This is the case for frequently used kernels such as the family of Matérn kernels or Wendland kernels
on compact Lipschitz regions.
The main results of this paper will be formulated for such kernels,
which is thus collected in the following assumption.
\begin{assumption}
\label{ass:kernel_region}
Let $\Omega \subset \R^d$ be a compact Lipschitz region $\Omega \subset \mathbb{R}^d$ and let $k: \Omega \times \Omega \rightarrow \R$ be a continuous kernel such that $\ns \asymp H^\tau(\Omega)$ for some $\tau > d/2$. 
\end{assumption}
For use later, we already remark that a Lipschitz region implies an interior cone condition \cite[Lemma 1.5]{krieg2024random}.
This will be exploited later on.
The interior cone conditions reads as follows.
\begin{definition}
A set $\Omega \subset \R^d$ is said to satisfy an interior cone condition, if there exists an angle $\alpha \in (0, \pi/2)$ and a radius $r>0$ such that for every $x \in \Omega$ a unit vector $\xi(x)$ exists such that the cone 
\begin{align}
\label{eq:definition_cone}
C(x, \xi(x), \alpha, r) := \{ x + \lambda y: y \in \R^d, \Vert y \Vert_2 = 1, y^\top \xi(x) \geq \cos(\alpha), \lambda \in [0, r] \}
\end{align}
is contained in $\Omega$.
\end{definition}
In order to quantify approximation errors, 
some geometric quantities on the distribution of the points $X \subset \Omega$ are necessary.
For a given set of points $X \subset \Omega$ within some region $\Omega$, 
we recall the definition of the seperation distance $q_X$ and the fill distance $h_X$:
\begin{align}
\label{eq:fill_sep_dist}
\begin{aligned}
q_{X} :=& \frac{1}{2} \min_{x_i \neq x_j \in X} \Vert x_i - x_j \Vert_2, \\
h_X := h_{X, \Omega} :=& \sup_{x \in \Omega} \min_{x_j \in X} \Vert x - x_j \Vert_2.
\end{aligned}
\end{align}
For a bounded region that satisfies an interior cone condition, 
there is a constant $c_\Omega > 0$ such that it holds
\begin{align}
\label{eq:bound_fill_sep_dist}
\frac{h_{X}}{q_X} \geq c_\Omega
\end{align}
for any set $X \subset \Omega$ of least two points.
This follows from volume comparison arguments \cite[Section 14.1]{wendland2005scattered}.
For the opposite bound, one considers the so-called uniformity constant defined as
\begin{align}
\label{eq:uniformity_constant}
\rho_X := \frac{h_X}{q_X}.
\end{align}
If the uniformity constant is uniformly bounded for a sequence of sets $(X_n)_{n \in \N} \subset \Omega$,
the sequence is called quasi-uniform.
In the following, we will consider such quasi-uniform sequences of sets of points, 
and thus state the following assumption:
\begin{assumption}
\label{ass:points}
Let $(X_n)_{n \in \N} \subset \Omega$ be a nested sequence of quasi-uniformly distributed point sets with geometrically decaying fill distance,
i.e.\ it holds
\begin{align}
\label{eq:assumption_decay_fill_dist}
c_0' a^n \leq q_{X_n} \leq h_{X_n} \leq c_0'' a^n
\end{align}
for constants $c'_0, c_0'' > 0$ and $a \in (0, 1)$.
\end{assumption}
Standard error estimates are often given in $L_p(\Omega)$ norms for $1 \leq p \leq \infty$ \cite{wendland2005scattered},
and in this manuscript we focus on $L_2(\Omega)$ error estimates.
For functions $f \in \ns$, 
it holds the following standard error estimate for the kernel interpolant $s_{f, X}$:
\begin{align}
\label{eq:standard_bound}
\Vert f - s_{f, X} \Vert_{L_2(\Omega)} \leq C h_{X}^{\tau} \cdot \Vert f - s_{f, X} \Vert_{\ns}
\end{align}
In the superconvergence regime \cite{karvonen2025general},
where $f$ is included in some particular subspaces of $\ns$,
the RKHS residual term $\Vert f - s_{f, X} \Vert_{\ns}$ is decaying and can be further estimated.
On the contrary, if $f$ is not included in the RKHS $\ns$,
which is the so-called escaping (the native space) regime, 
the convergence rate is actually smaller.
Details on these regimes are provided in \Cref{subsec:direct_inverse} and \Cref{subsec:power_spaces_superconv}.

In order to assess the stability of kernel-based approximation method,
the kernel matrix $A_X$ of Eq.~\eqref{eq:kernel_matrix} has been thoroughly analyzed \cite{wendland2005scattered,diederichs2019improved,wenzel2025spectral}.
We will recall a stability bound on its smallest eigenvalue $\lambda_{\min}(A_X)$:
Under \Cref{ass:kernel_region},
i.e.\ Sobolev kernels on compact Lipschitz regions,
there exists a constant $c_0 > 0$ such that for any set of pairwise distinct points $X \subset \Omega$ it holds \cite[Theorem 5]{wenzel2025sharp}
\begin{align}
\label{eq:estimate_lambda_min}
\begin{aligned}
\lambda_{\min}(A_{X}) &\geq c_0 q_{X}^{2\tau - d}, \\
\Rightarrow \quad \Vert A_X^{-1} \Vert_{2,2} &\leq c_0^{-1} q_X^{d-2\tau}.
\end{aligned}
\end{align}

\subsection{Direct and inverse statements (escaping regime)}
\label{subsec:direct_inverse}

This subsection recalls direct and inverse statments for the \emph{escaping regime}.
This regime covers all functions, which are outside the RKHS.
The opposite \emph{superconvergence regime},
which covers functions that are in particular subspaces of the RKHS,
is treated in \Cref{subsec:power_spaces_superconv}.

First we start by recalling a direct statement, 
which is a special case of \cite[Theorem 4.2]{narcowich2006sobolev} with the improvement $\beta > d/2$ instead of $\lfloor \beta \rfloor > d/2$ according to \cite[Theorem 4.1]{arcangeli2007extension}.
\begin{theorem}
\label{th:error_estimate}
Consider a RBF kernel $k$ that satisfies Eq.\ \eqref{eq:fourier_decay} for some $\tau > d/2$ and a bounded region $\Omega \subset \R^d$ with Lipschitz boundary. 
For some $\beta$ with $d/2 < \beta \leq \tau$ let $f \in H^\beta(\Omega) \supseteq H^\tau(\Omega)$.
Then the kernel interpolant $s_{f, X}$ satisfies
\begin{align*}
\Vert f - s_{f,X} \Vert_{L_2(\Omega)} \leq C h_{X}^\beta \rho_{X}^{\tau-\beta} \Vert f \Vert_{H^\beta(\Omega)}.
\end{align*}
\end{theorem}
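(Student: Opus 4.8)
The plan is to follow the classical \emph{escaping the native space} argument of \cite{narcowich2006sobolev}, in the fractional refinement of \cite{arcangeli2007extension}. First I would pass to $\R^d$: since $\Omega$ is a bounded Lipschitz region, a bounded Sobolev extension operator allows one to replace $f$ by an extension $\tilde f \in H^\beta(\R^d)$ with $\|\tilde f\|_{H^\beta(\R^d)} \le C\|f\|_{H^\beta(\Omega)}$, which makes the Fourier-analytic tools available. The decisive structural observation is that the residual $u := f - s_{f,X}$ vanishes on $X$ (the interpolation conditions) and still belongs to $H^\beta(\Omega)$, since $f \in H^\beta(\Omega)$ and $s_{f,X} \in \ns \asymp H^\tau(\Omega) \hookrightarrow H^\beta(\Omega)$ by $\beta \le \tau$. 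The Sobolev bound for functions with scattered zeros then yields, for $\beta > d/2$ and $h_X$ small enough,
\begin{align*}
\|f - s_{f,X}\|_{L_2(\Omega)} \;\le\; C\, h_X^{\beta}\, \|f - s_{f,X}\|_{H^\beta(\Omega)} \;\le\; C\, h_X^{\beta}\bigl(\|f\|_{H^\beta(\Omega)} + \|s_{f,X}\|_{H^\beta(\Omega)}\bigr).
\end{align*}

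It remains to bound the interpolant itself in $H^\beta$, and this is where the uniformity penalty appears: I would establish the (penalized) stability estimate $\|s_{f,X}\|_{H^\beta(\Omega)} \le C\,\rho_X^{\tau-\beta}\,\|f\|_{H^\beta(\Omega)}$ for the kernel interpolation operator. The ingredients are: (a) the stability bound \eqref{eq:estimate_lambda_min} for $\lambda_{\min}(A_X)$, which controls the coefficient vector of $s_{f,X}$ by its nodal values; (b) a sampling inequality on the $q_X$-separated point cloud, estimating $\|f|_X\|_{\ell_2}$ by $q_X^{-d/2}$ times Sobolev norms of $f$; (c) a comparison of $s_{f,X}$ (the minimum-$\ns$-norm interpolant) with an explicit competitor that interpolates the same nodal data -- either a low-pass truncation $g_\sigma$ of $\tilde f$ at frequency $\sigma \sim q_X^{-1}$ (which lies in $\ns$, satisfies the Bernstein estimate $\|g_\sigma\|_{\ns} \lesssim \sigma^{\tau-\beta}\|\tilde f\|_{H^\beta}$, and whose nodal residual is absorbed via (a) and (b)), or a local quasi-interpolant assembled from Lagrange-type functions at scale $h_X$; and (d) the inverse (Bernstein) inequalities for $V(X)$ together with interpolation between the Sobolev scales $L_2$, $H^\beta$, $H^\tau \asymp \ns$ and the relation $h_X \ge q_X$, which convert the resulting mixed powers of $h_X$ and $q_X$ into the single factor $\rho_X^{\tau-\beta}$. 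Plugging this bound into the previous display, and closing a short bootstrap on $\|s_{f,X}\|_{L_2(\Omega)}$ (or invoking the direct $H^\beta(\Omega)\to L_2(\Omega)$ stability of interpolation), gives the asserted estimate.

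I expect the main obstacle to be exactly step (c)--(d): securing the \emph{sharp} geometric dependence, i.e.\ that the penalty is precisely $\rho_X^{\tau-\beta}$ and no worse. This rests on the careful localization estimates for Lagrange-type functions and the quasi-uniformity substitutes of \cite{narcowich2006sobolev} (for integer $\beta$), refined to fractional smoothness and general Lipschitz domains in \cite{arcangeli2007extension}. Since the statement above is a special case of those results, the cleanest route is simply to invoke them; the sketch given here is only an outline of their proof.
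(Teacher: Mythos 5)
The paper gives no proof of this statement at all: it is recalled as background, being a special case of \cite[Theorem 4.2]{narcowich2006sobolev} with the relaxation to fractional $\beta > d/2$ from \cite[Theorem 4.1]{arcangeli2007extension}, which is exactly the route you take in the end by invoking those references. Your outline of the underlying argument (Sobolev extension, the zeros/sampling inequality $\Vert f - s_{f,X}\Vert_{L_2(\Omega)} \leq C h_X^{\beta}\Vert f - s_{f,X}\Vert_{H^{\beta}(\Omega)}$, and the $\rho_X^{\tau-\beta}$-penalized Sobolev stability of the interpolation operator proved via band-limited competitors and the $\lambda_{\min}$ bound) faithfully reflects the structure of those cited proofs, so the proposal is consistent with the paper's treatment.
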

Note that this result can also be formulated in terms of power spaces,
because they are norm-equivalent to the Sobolev spaces, see e.g.\ \cite{wenzel2025sharp}.
This means, 
the right hand side norm $\Vert f \Vert_{H^\beta(\Omega)}$ can be replaced by the equivalent power space norm $\Vert f \Vert_{\mathcal{H}_{\beta/\tau}(\Omega)}$,
as discussed in \Cref{subsec:power_spaces_superconv}.

The direct statement of \Cref{th:error_estimate} was complemented by a corresponding sharp inverse statement \cite{wenzel2025sharp}.
Here we recall a slightly extended version of this inverse statement (with weakened assumptions), taken from \cite{avesani2025sobolev}.
\begin{theorem}[Theorem 3.5 of \cite{avesani2025sobolev}]
\label{thm:l2-inverse-statement_strengthened}
Under \Cref{ass:kernel_region} and \Cref{ass:points},
let $f \in L_2(\Omega)$ and assume there exists a sequence of point based approximants, i.e.\ $(s_{f, X_n})_{n \in \N} \subset \ns$ with $s_{f, X_n} \in \Sp \{ k(\cdot, x), x \in X_n \}$ such that
\begin{align}
\label{eq:l2-inverse-statement_strengthened}
\Vert f - s_{f, X_n} \Vert_{L_2(\Omega)} \leq c_f h_{X_n}^\beta
\end{align}
holds for some $c_f >0$ and $\beta \in (0, \tau]$.
Then $f \in H^{\beta'}(\Omega)$ for all $\beta' \in (0, \beta)$. 
\end{theorem}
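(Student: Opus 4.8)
The plan is to deduce the Sobolev regularity of $f$ from a telescoping sum over the nested point sets $X_n$, estimating each telescoping increment with a Bernstein (inverse) inequality for the finite-dimensional kernel spaces $V(X_n) = \Sp\{k(\cdot,x) : x \in X_n\}$. Since $h_{X_n} \le c_0'' a^n \to 0$, the rate assumption \eqref{eq:l2-inverse-statement_strengthened} forces $s_{f, X_n} \to f$ in $L_2(\Omega)$, and using the nestedness $V(X_n) \subseteq V(X_{n+1})$ we obtain the $L_2$-convergent decomposition
\begin{align*}
f = s_{f, X_1} + \sum_{n=1}^\infty g_n, \qquad g_n := s_{f, X_{n+1}} - s_{f, X_n} \in V(X_{n+1}),
\end{align*}
where, by the triangle inequality and \eqref{eq:assumption_decay_fill_dist}, $\|g_n\|_{L_2(\Omega)} \le c_f\big(h_{X_{n+1}}^\beta + h_{X_n}^\beta\big) \le C a^{n\beta}$.

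The key step is to establish the following \emph{Bernstein inequality}: there is a constant $C > 0$ such that for every $m \in \N$, every $h \in V(X_m)$ and every $\sigma \in [0, \tau]$,
\begin{align*}
\|h\|_{H^\sigma(\Omega)} \le C\, q_{X_m}^{-\sigma}\, \|h\|_{L_2(\Omega)}.
\end{align*}
Applying this with $\sigma = \beta'$, $h = g_n$ and $m = n+1$, and using $q_{X_{n+1}} \ge c_0' a^{n+1}$ together with the bound above, gives $\|g_n\|_{H^{\beta'}(\Omega)} \le C'' a^{n(\beta - \beta')}$, which is summable because $\beta' < \beta$ and $a \in (0,1)$. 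Hence $\sum_n g_n$ converges absolutely in the Banach space $H^{\beta'}(\Omega)$; as it also converges to $f - s_{f, X_1}$ in $L_2(\Omega)$ and $H^{\beta'}(\Omega) \hookrightarrow L_2(\Omega)$, the two limits agree, so $f - s_{f, X_1} \in H^{\beta'}(\Omega)$. Since $s_{f, X_1} \in \ns \asymp H^\tau(\Omega) \subseteq H^{\beta'}(\Omega)$, this yields $f \in H^{\beta'}(\Omega)$ for every $\beta' \in (0, \beta)$.

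To prove the Bernstein inequality, the endpoint $\sigma = 0$ is trivial, and by the norm equivalence $\ns \asymp H^\tau(\Omega)$ together with the Sobolev interpolation inequality $\|h\|_{H^\sigma(\Omega)} \le C \|h\|_{L_2(\Omega)}^{1 - \sigma/\tau} \|h\|_{H^\tau(\Omega)}^{\sigma/\tau}$ on the Lipschitz region $\Omega$, it suffices to treat the endpoint $\sigma = \tau$, i.e.\ to show $\|h\|_{\ns} \le C q_{X_m}^{-\tau} \|h\|_{L_2(\Omega)}$ for $h \in V(X_m)$. Writing $h = \sum_j \alpha_j k(\cdot, x_j)$ and $h|_{X_m} := (h(x))_{x \in X_m}$, the reproducing property gives $h|_{X_m} = A_{X_m} \alpha$, hence $\|h\|_{\ns}^2 = \alpha^\top A_{X_m} \alpha = (h|_{X_m})^\top A_{X_m}^{-1} (h|_{X_m}) \le c_0^{-1} q_{X_m}^{d - 2\tau} \sum_{x \in X_m} |h(x)|^2$ by \eqref{eq:estimate_lambda_min}. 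The whole argument therefore rests on a Marcinkiewicz--Zygmund sampling inequality
\begin{align*}
\sum_{x \in X_m} |h(x)|^2 \le C\, q_{X_m}^{-d}\, \|h\|_{L_2(\Omega)}^2 \qquad \text{for all } h \in V(X_m).
\end{align*}

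I expect this Marcinkiewicz--Zygmund inequality to be the main obstacle, since it is placed exactly at its critical scale: the sampling density $q_{X_m}$ is of the same order as the intrinsic resolution of $V(X_m)$. The naive approach -- disjoint cone sectors at the points $x \in X_m$ (admissible by the interior cone condition implied by \Cref{ass:kernel_region}) and a scaled local Sobolev embedding -- only yields $\sum_{x \in X_m} |h(x)|^2 \le C q_{X_m}^{-d} \|h\|_{L_2(\Omega)}^2 + C q_{X_m}^{2\tau - d} \|h\|_{H^\tau(\Omega)}^2$, whose second contribution is comparable to $\|h\|_{\ns}^2$ and cannot be absorbed. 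Overcoming this requires genuinely exploiting the structure $h \in V(X_m)$ -- for instance by a Littlewood--Paley frequency splitting of a Sobolev extension of $h$, or by spectral estimates for the kernel matrices $A_{X_m}$ and the embedding $\ns \hookrightarrow L_2(\Omega)$ (cf.\ \cite{wenzel2025spectral}) -- and it is precisely here that quasi-uniformity (\Cref{ass:points}) is needed; granting it, the remaining steps reduce to the elementary geometric--series bookkeeping indicated above.
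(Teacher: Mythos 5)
Your overall architecture is the right one and matches how the paper (and the cited literature) proceeds: telescope $f = s_{f,X_1} + \sum_n (s_{f,X_{n+1}} - s_{f,X_n})$, bound each increment in $L_2(\Omega)$ by $C a^{n\beta}$ via \eqref{eq:l2-inverse-statement_strengthened} and \eqref{eq:assumption_decay_fill_dist}, upgrade each increment to $H^{\beta'}(\Omega)$ with a Bernstein inequality for trial functions, sum the geometric series, and identify the $H^{\beta'}$-limit with $f - s_{f,X_1}$ through the embedding into $L_2(\Omega)$. That bookkeeping is correct. The problem is that the entire weight of the argument rests on the Bernstein inequality $\Vert h \Vert_{H^\sigma(\Omega)} \leq C q_X^{-\sigma} \Vert h \Vert_{L_2(\Omega)}$ for $h \in \Sp\{k(\cdot,x), x \in X\}$, and you do not prove it. Your reduction — interpolate down to the endpoint $\sigma=\tau$, then use $\Vert h \Vert_{\ns}^2 = (h|_X)^\top A_X^{-1}(h|_X) \leq c_0^{-1} q_X^{d-2\tau}\sum_{x\in X}|h(x)|^2$ via \eqref{eq:estimate_lambda_min} — relocates the difficulty into a Marcinkiewicz--Zygmund inequality $\sum_{x\in X}|h(x)|^2 \leq C q_X^{-d}\Vert h\Vert_{L_2(\Omega)}^2$ at exactly the critical sampling scale, which you then leave open ("granting it"). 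As you yourself note, the standard local sampling estimate produces the extra term $q_X^{2\tau-d}\Vert h\Vert_{H^\tau(\Omega)}^2$, which is of the same size as the quantity you are trying to bound; in fact the only way this critical-scale MZ inequality is known to hold for trial functions is as a \emph{consequence} of the Bernstein inequality, so your route is essentially circular rather than an independent proof. This is a genuine gap, not a technicality: it is precisely the hard core of the theorem.

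For comparison, the paper closes this step by a different mechanism (\Cref{prop:escaping_bernstein}, proved in \Cref{subsec:bernstein_ineq_proof} following \cite{zhengjie2025inverse}): for a trial function $u$ one constructs a band-limited function $f_{\sigma}$ with bandwidth $\sigma \asymp q_X^{-1}$ that agrees with $u$ on $X$, so that $u$ is the kernel interpolant (RKHS-orthogonal projection) of $f_\sigma$ and hence $\Vert u\Vert_{H^\tau(\Omega)} \leq C\Vert f_\sigma\Vert_{H^\tau(\R^d)}$; the classical Bernstein inequality for band-limited functions then gives $\Vert u\Vert_{H^\tau(\Omega)} \leq C q_X^{-\tau+\alpha}\Vert u\Vert_{H^\alpha(\Omega)}$ for $\alpha \in (d/2,\tau]$, and two applications of Gagliardo--Nirenberg interpolation bootstrap this down to $\Vert u\Vert_{H^\tau(\Omega)} \leq Cq_X^{-\tau}\Vert u\Vert_{L_2(\Omega)}$ and then to all $\sigma \in [0,\tau]$. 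Note that the naive interpolation-to-the-endpoint step you propose is fine, but the endpoint itself cannot be reached through $\lambda_{\min}(A_X)$ plus a sampling inequality alone; some device exploiting the trial-space structure (band-limited extension as above, or an equivalent spectral argument) is indispensable, and without it your proof is incomplete.
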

We recall,
that the direct statement of \Cref{th:error_estimate} and the corresponding inverse statement of \Cref{thm:l2-inverse-statement_strengthened} are sharp,
and thus establish a one-to-one correspondence between smoothness and approximation rate for the escaping regime \cite{wenzel2025sharp}.

A key tool for deriving such an inverse statement are Bernstein inequalities,
which allow to bound strong norms in terms of weak norms.
The proof of \Cref{thm:l2-inverse-statement_strengthened} essentially relied on such a Bernstein inequality,
 which was recently derived in \cite{zhengjie2025inverse} and slightly generalized in \cite[Theorem 3.4]{avesani2025sobolev}.
It states that it holds
\begin{align}
\label{eq:bernstein_old}
\Vert u \Vert_{H^s(\Omega)} \leq C q_{X}^{-s} \Vert u \Vert_{L_2(\Omega)}
\end{align}
for a constant $C = C_{d, k, \tau, \Omega} > 0$, $s \in [0, \tau]$
and for all 
point sets $X \subset \Omega$ and all trial functions $u \in \Sp \{k(\cdot, x), x \in X \}$.
Unfortunately,
the Bernstein inequality in \cite{zhengjie2025inverse} as well as in \cite{avesani2025sobolev} is formulated for RBF kernels.
In \Cref{prop:escaping_bernstein}, 
we prove that such a Bernstein inequality actually holds for any Sobolev kernel as defined in \Cref{ass:kernel_region},
i.e.\ assuming a radial kernel is not necessary.

\subsection{Mercer's theorem, power spaces and superconvergence}
\label{subsec:power_spaces_superconv}

In order to introduce power spaces and stating the superconvergence results from \cite{karvonen2025general},
we start by introducing the kernel integral operator \cite{steinwart2012mercer}:
For a given continuous strictly positive definite kernel $k: \Omega \times \Omega \rightarrow \R$ on a bounded Lipschitz region $\Omega \subset \R^d$, 
the associated kernel integral operator $T := T_k: L_2(\Omega) \rightarrow L_2(\Omega)$ is given as
\begin{align*}
Tv(x) := \int_{\Omega} k(x, z) v(z) ~ \mathrm{d}z, \qquad v \in L_2(\Omega), ~ x \in \Omega.
\end{align*}
This operator $T$ is self-adjoint and positive in $L_2(\Omega)$ with ordered eigenvalues $\lambda_1 \geq \lambda_2 \geq ... > 0$ and corresponding eigenfunctions $( \varphi_j)_{j \in \N}$,
which form an orthonormal basis (ONB) of $L_2(\Omega)$.
Furthermore it holds $\Vert \varphi_j \Vert_{\ns} = 1/\sqrt{\lambda_j}$.

Moreover, one can show that $T$ viewed as an operator $L_2(\Omega) \rightarrow \ns$ is the adjoint of the embedding operator $\ns \hookrightarrow L_2(\Omega)$ \cite[Proposition 10.28]{wendland2005scattered}, 
such that the following important identity holds:
\begin{align}
\label{eq:important_identity}
\langle f, Tv \rangle_{\ns} = \langle f, v \rangle_{L_2(\Omega)} \qquad \forall v \in L_2(\Omega), ~ f \in \ns.
\end{align}
Note that \cite[Example 20]{karvonen2025general} extended this identity to functions $v \in L_p(\Omega)$, $p \geq 1$,
however for us the $p=2$ identity is sufficient.
Using the ONB $(\varphi_j)_{j \in \N}$,
we can introduce the scale of power spaces for $\vartheta \in [0, \infty)$ as
\begin{align}
\label{eq:power_spaces}
\Ht(\Omega) := \{ f \in L_2(\Omega) ~:~ \sum_{j=1}^\infty \frac{|\langle f, \varphi_j \rangle_{L_2(\Omega)}|^2}{\lambda_j^\vartheta}  < \infty \} =\left\{
\begin{array}{ll}
L_2(\Omega) & \vartheta = 0 \\
\ns & \vartheta = 1 \\
TL_2(\Omega) & \vartheta = 2 \\
\end{array}
\right.
\end{align}
with inner products given as
\begin{align}
\label{eq:dot_product}
\langle f, g \rangle_{\Ht(\Omega)} = \sum_{j = 1}^\infty \frac{\langle f, \varphi_j \rangle_{L_2(\Omega)} \langle g, \varphi_j \rangle_{L_2(\Omega)}}{\lambda_j^\vartheta}.
\end{align}
These spaces are complete, and for $\vartheta$ sufficiently large these spaces are RKHS \cite{steinwart2012mercer}.
For $\vartheta = 1$ we obtain the standard RKHS $\ns$, i.e.\ $\mathcal{H}_{\vartheta=1}(\Omega) = \ns$.
It is important to note that the index $k$ of $\ns$ refers to the kernel $k$, while the index $\vartheta \geq 0$ refers to a scalar value.
The scale of spaces $\mathcal{H}_\vartheta(\Omega)$ in visualized in \Cref{fig:visualization},
while a more complete picture including $TL_p(\Omega)$ spaces is provided in \cite{karvonen2025general}.
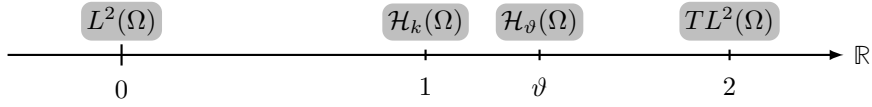
\begin{figure}[t]
\setlength\fwidth{.4\textwidth}
\begin{center}
\begin{tikzpicture}[>=latex, thick]
\draw[->] (0,0) -- (11cm,0) node [right] {$\R$};

\draw (1.5,-4pt) -- (1.5,4pt) node[below=10pt]{0};
\draw (1.5,5pt) node[above, align=center, fill=lightgray, rounded corners, inner sep=2pt]{$L^2(\Omega)$};

\draw (7,-2pt) -- (7,2pt);
\draw (7, 5pt) node[above, align=center, fill=lightgray, rounded corners, inner sep=2pt]{$\mathcal{H}_\vartheta(\Omega)$} node[below=10pt]{$\vartheta$};

\draw (5.5,-2pt) -- (5.5,2pt);
\draw (5.5, 5pt) node[above, align=center, fill=lightgray, rounded corners, inner sep=2pt]{$\ns$} node[below=10pt]{1};
\draw (9.5,-2pt) -- (9.5,2pt);
\draw (9.5, 5pt) node[above, align=center, fill=lightgray, rounded corners, inner sep=2pt]{$TL^2(\Omega)$} node[below=10pt]{2};
\end{tikzpicture}
\end{center}
\caption{Visualization of the scale of power spaces with the special cases $L_2(\Omega)$ (for $\vartheta = 0$), $\ns$ (for $\vartheta = 1$) and $TL_2(\Omega)$ (for $\vartheta = 2$).
This article mainly considers the superconvergence case, i.e.\ $\vartheta \in [1, 2]$.}
\label{fig:visualization}
\end{figure}

For the Sobolev kernels of interest in this manuscript, 
i.e.\ under \Cref{ass:kernel_region},
the eigenvalues $\lambda_j$ decay asymptotically as $j^{-2\tau/d}$ \cite{santin2016approximation}, 
i.e.\ there are constants $c, C > 0$ such that
\begin{align}
\label{eq:asympt_eigvals}
c j^{-2\tau/d} \leq \lambda_j \leq C j^{-2\tau/d}.
\end{align}
Still under \Cref{ass:kernel_region}, it is worth to note that for $\vartheta > \frac{d}{2\tau} =: \vartheta_\text{inf}$, 
the spaces $\mathcal{H}_\vartheta(\Omega)$ are again RKHS with reproducing kernel given by the \emph{power kernel}
\begin{align}
\label{eq:power_kernel}
k^{(\vartheta)}(x, z) = \sum_{n=1}^\infty \lambda_n^{\vartheta} \varphi_n(x) \varphi_n(z),
\end{align}
Especially $\vartheta_\text{inf}$ is a lower bound for possible $\vartheta$ values.
For more details, see e.g.\ \cite{steinwart2012mercer}.

Historically, 
the first superconvergence statements focussed on functions in the image $TL_2(\Omega)$ of the integral operator \cite{schaback1999improved,schaback2018superconvergence,sloan2025doubling}.
Recently, \cite{karvonen2025general} provided general superconvergence statements,
including in particular the spaces $\mathcal{H}_\vartheta(\Omega)$ for $\vartheta \in [1, 2]$.
These general superconvergence statements can be proven in the particular setting of power spaces via the Hölder inequality
\begin{align}
\label{eq:power_space_hoelder}
\left| \langle f, g \rangle_{\mathcal{H}_1(\Omega)} \right| \leq \Vert f \Vert_{\Ht(\Omega)} \cdot \Vert g \Vert_{\mathcal{H}_{2-\vartheta}(\Omega)},
\end{align}
for $f \in \Ht(\Omega), g \in \mathcal{H}_{2-\vartheta}(\Omega)$, $\vartheta \in [1, 2]$
and the interpolation inequality 
\begin{align}
\label{eq:power_space_interpol}
\Vert f \Vert_{\mathcal{H}_{\vartheta}(\Omega)} &\leq \Vert f \Vert_{L_2(\Omega)}^{1 - \vartheta} \cdot \Vert f \Vert_{\ns}^{\vartheta}, \qquad f \in \ns.
\end{align}
Eq.~\eqref{eq:power_space_hoelder} and Eq.~\eqref{eq:power_space_interpol} combined directly 
yield the direct statement of Eq.~\eqref{eq:error_bound_superconv} from \cite{karvonen2025general},
for which we provide a brief proof for convenience:
\begin{theorem}
\label{cor:direct_statement}
Consider a kernel $k$ that satisfies Eq.~\eqref{eq:fourier_decay} for some $\tau > d/2$ and a bounded Lipschitz region $\Omega \subset \R^d$. 
Consider $f \in \mathcal{H}_\vartheta(\Omega)$ for $\vartheta \in [1, 2]$.
Then it holds
\begin{align*}
\Vert f - s_{f, X} \Vert_{L_2(\Omega)} \leq C h_{X}^{\vartheta \tau} \cdot \Vert f \Vert_{\mathcal{H}_{\vartheta}(\Omega)}
\end{align*}
for any $X \subset \Omega$.
\end{theorem}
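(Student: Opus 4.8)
The plan is to keep everything at the level of the native space: I would bootstrap the standard error estimate Eq.~\eqref{eq:standard_bound} against the two power-space inequalities Eq.~\eqref{eq:power_space_hoelder} and Eq.~\eqref{eq:power_space_interpol}, in the spirit of the classical ``doubling'' argument. Since $\vartheta \in [1,2]$ we have $\mathcal{H}_\vartheta(\Omega) \subseteq \ns$, so $f \in \ns$ and, by Eq.~\eqref{eq:interpolant_projection}, the kernel interpolant is the $\ns$-orthogonal projection $s_{f,X} = \Pi_{V(X)}(f)$ onto $V(X) = \Sp\{k(\cdot,x), x \in X\}$. Set $e := f - s_{f,X} \in \ns$; it is orthogonal to $V(X)$ in $\ns$, and in particular $\langle e, s_{f,X}\rangle_{\ns} = 0$, so that $\Vert e\Vert_{\ns}^2 = \langle e, f\rangle_{\ns} = \langle e, f\rangle_{\mathcal{H}_1(\Omega)}$.

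Next I would estimate this pairing. Since $f \in \mathcal{H}_\vartheta(\Omega)$ and $e \in \ns = \mathcal{H}_1(\Omega) \subseteq \mathcal{H}_{2-\vartheta}(\Omega)$ (using $2-\vartheta \in [0,1]$), the Hölder inequality Eq.~\eqref{eq:power_space_hoelder} gives $\langle e, f\rangle_{\mathcal{H}_1(\Omega)} \leq \Vert f\Vert_{\mathcal{H}_\vartheta(\Omega)}\,\Vert e\Vert_{\mathcal{H}_{2-\vartheta}(\Omega)}$, and applying the interpolation inequality Eq.~\eqref{eq:power_space_interpol} to $e \in \ns$ with parameter $2-\vartheta \in [0,1]$ yields $\Vert e\Vert_{\mathcal{H}_{2-\vartheta}(\Omega)} \leq \Vert e\Vert_{L_2(\Omega)}^{\vartheta-1}\,\Vert e\Vert_{\ns}^{2-\vartheta}$. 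Combining the last three displays and cancelling $\Vert e\Vert_{\ns}^{2-\vartheta}$ (the case $e=0$ being trivial) gives the key bound $\Vert e\Vert_{\ns}^{\vartheta} \leq \Vert f\Vert_{\mathcal{H}_\vartheta(\Omega)}\,\Vert e\Vert_{L_2(\Omega)}^{\vartheta-1}$.

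Finally I would feed in the standard estimate Eq.~\eqref{eq:standard_bound}, $\Vert e\Vert_{L_2(\Omega)} \leq C h_X^\tau \Vert e\Vert_{\ns}$, to replace $\Vert e\Vert_{L_2(\Omega)}^{\vartheta-1}$ on the right; after cancelling $\Vert e\Vert_{\ns}^{\vartheta-1}$ this leaves $\Vert e\Vert_{\ns} \leq (C h_X^\tau)^{\vartheta-1}\Vert f\Vert_{\mathcal{H}_\vartheta(\Omega)}$, and inserting this once more into Eq.~\eqref{eq:standard_bound} produces $\Vert f - s_{f,X}\Vert_{L_2(\Omega)} \leq C^\vartheta h_X^{\vartheta\tau}\Vert f\Vert_{\mathcal{H}_\vartheta(\Omega)}$, which is the claim.

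The steps are essentially bookkeeping of exponents, so there is no single hard obstacle; the point that I would be most careful about is the legitimacy of the reduction $\Vert e\Vert_{\ns}^2 = \langle e, f\rangle_{\mathcal{H}_1(\Omega)}$ and of applying Eq.~\eqref{eq:power_space_hoelder}--\eqref{eq:power_space_interpol} to $e$: this hinges on $f \in \ns$ (hence the restriction $\vartheta \geq 1$), which guarantees both that $s_{f,X}$ really is the $\ns$-orthogonal projection and that $e \in \ns$, so that all power-space norms in the chain are finite and the $\vartheta=1$ endpoint of the interpolation inequality is available. The endpoints $\vartheta = 1$ (where the argument collapses to Eq.~\eqref{eq:standard_bound}) and $\vartheta = 2$ (where $\mathcal{H}_2(\Omega) = TL_2(\Omega)$ and one may alternatively write $f = Tv$ and argue directly via the identity Eq.~\eqref{eq:important_identity}) provide convenient sanity checks.
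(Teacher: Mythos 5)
Your proposal is correct and follows essentially the same route as the paper's proof: both combine the standard bound Eq.~\eqref{eq:standard_bound} with the orthogonality identity $\Vert f - s_{f,X}\Vert_{\ns}^2 = \langle f, f - s_{f,X}\rangle_{\ns}$, the power-space H\"older inequality Eq.~\eqref{eq:power_space_hoelder} and the interpolation inequality Eq.~\eqref{eq:power_space_interpol}, differing only in the trivial order in which the resulting exponent bookkeeping is rearranged.
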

\begin{proof}
We start with the standard error bound of Eq.~\eqref{eq:standard_bound}
\begin{align*}
\Vert f - s_{f, X} \Vert_{L_2(\Omega)} \leq C h_{X}^{\tau} \Vert f - s_{f, X} \Vert_{\ns},
\end{align*}
and further estimate the $\Vert f - s_{f, X} \Vert_{\ns}$ factor by using Eq.~\eqref{eq:power_space_hoelder} and Eq.~\eqref{eq:power_space_interpol}.
Note that the kernel interpolant can be expressed as an orthogonal projection, see Eq.~\eqref{eq:interpolant_projection}:
\begin{align*}
\Vert f - s_{f, X} \Vert_{\ns}^2 
&= \langle f, f - s_{f, X} \rangle_{\ns} \\ 
&\leq \Vert f \Vert_{\mathcal{H}_\vartheta(\Omega)} \cdot \Vert f - s_{f, X_n} \Vert_{\mathcal{H}_{2-\vartheta}(\Omega)} \\
&\leq \Vert f \Vert_{\mathcal{H}_\vartheta(\Omega)} \cdot \Vert f - s_{f, X} \Vert_{L_2(\Omega)}^{\vartheta - 1} \cdot \Vert f - s_{f, X} \Vert_{\ns}^{2 - \vartheta} \\
\Leftrightarrow \Vert f - s_{f, X} \Vert_{\ns} &\leq \Vert f \Vert_{\mathcal{H}_\vartheta(\Omega)}^{\frac{1}{\vartheta}} \cdot \Vert f - s_{f, X} \Vert_{L_2(\Omega)}^{\frac{\vartheta - 1}{\vartheta}}.
\end{align*}
Plugging both together and rearranging yields the result.
\end{proof}

The power spaces $\calh_\vartheta(\Omega)$ can also be described as the closure of images of the integral operator under $L_p(\Omega)$ spaces.
For proving our main result \Cref{th:main_result}, we need in particular the following statement,
taken from \cite[Theorem 29]{karvonen2025general}
\begin{align}
\label{eq:power_space_via_closure}
\overline{TL_2(\Omega)}^{\Vert \cdot \Vert_{\calh_\vartheta(\Omega)}} = \calh_\vartheta(\Omega), \qquad \forall \vartheta \in [0, 2].
\end{align}

\section{Utility statements}
\label{sec:utility_statements}

This section derives (technical) utility results, which are important in their own,
and which are crucial for proving the main result.
Readers only interested in the main statement and its proof may proceed with \Cref{sec:cont_superconv_inverse} and come back later.
Note that in the following, constants within proofs may change from line to line.

\subsection{Generalized reproducing property}

A key step for the proof of the inverse statement \Cref{th:main_result} will be the following theorem,
which we will call a \emph{generalized reproducing property}, because it generalizes the standard reproducing property:

\begin{prop}[Generalized reproducing property]
\label{prop:generalized_repr_prop}
Under \Cref{ass:kernel_region},
let $\vartheta_1, \vartheta_2 \in \R_{\geq 0}$ such that $2\vartheta_1 - \vartheta_2 > \frac{d}{2\tau}$.
Then it holds for all $x, z \in \Omega$:
\begin{align*}
\langle k^{(\vartheta_1)}(\cdot, x), k^{(\vartheta_1)}(\cdot, z) \rangle_{\mathcal{H}_{\vartheta_2}(\Omega)} = k^{(2\vartheta_1 - \vartheta_2)}(x, z)
\end{align*}
\end{prop}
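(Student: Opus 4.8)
The plan is to expand everything in the Mercer basis $(\varphi_j)_{j \in \N}$ and reduce the claim to a scalar identity about the eigenvalues $\lambda_j$. First I would record the Mercer expansion of the power kernel: by Eq.~\eqref{eq:power_kernel} we have $k^{(\vartheta_1)}(\cdot, x) = \sum_{j=1}^\infty \lambda_j^{\vartheta_1} \varphi_j(x) \varphi_j$, so that $\langle k^{(\vartheta_1)}(\cdot, x), \varphi_j \rangle_{L_2(\Omega)} = \lambda_j^{\vartheta_1} \varphi_j(x)$. Plugging this into the definition of the $\mathcal{H}_{\vartheta_2}(\Omega)$ inner product, Eq.~\eqref{eq:dot_product}, formally yields
\begin{align*}
\langle k^{(\vartheta_1)}(\cdot, x), k^{(\vartheta_1)}(\cdot, z) \rangle_{\mathcal{H}_{\vartheta_2}(\Omega)}
= \sum_{j=1}^\infty \frac{\lambda_j^{\vartheta_1} \varphi_j(x) \cdot \lambda_j^{\vartheta_1} \varphi_j(z)}{\lambda_j^{\vartheta_2}}
= \sum_{j=1}^\infty \lambda_j^{2\vartheta_1 - \vartheta_2} \varphi_j(x) \varphi_j(z),
\end{align*}
which is exactly the Mercer series defining $k^{(2\vartheta_1 - \vartheta_2)}(x,z)$ by Eq.~\eqref{eq:power_kernel}. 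So the entire content of the proposition is justifying that these manipulations are legitimate, i.e.\ that the relevant series converge and that $k^{(\vartheta_1)}(\cdot, x)$ genuinely lies in $\mathcal{H}_{\vartheta_2}(\Omega)$ for each fixed $x$.

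The key convergence input is the eigenvalue asymptotics $\lambda_j \asymp j^{-2\tau/d}$ from Eq.~\eqref{eq:asympt_eigvals}, valid under \Cref{ass:kernel_region}, together with pointwise control of the eigenfunctions. For the membership $k^{(\vartheta_1)}(\cdot, x) \in \mathcal{H}_{\vartheta_2}(\Omega)$ I need $\sum_j \lambda_j^{2\vartheta_1 - \vartheta_2} \varphi_j(x)^2 < \infty$ uniformly in $x$; the natural way to get this is to observe that this series is precisely the diagonal value $k^{(2\vartheta_1-\vartheta_2)}(x,x)$ of a power kernel with exponent $2\vartheta_1 - \vartheta_2 > \tfrac{d}{2\tau} = \vartheta_{\mathrm{inf}}$, and by the discussion around Eq.~\eqref{eq:power_kernel} such a power kernel is a bona fide (continuous) reproducing kernel on the compact set $\Omega$, hence bounded on the diagonal. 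Concretely one checks $\sum_j \lambda_j^{2\vartheta_1-\vartheta_2} \|\varphi_j\|_{L_\infty}^2 < \infty$ using $\|\varphi_j\|_{L_\infty} \lesssim \lambda_j^{-\varepsilon}$-type bounds (equivalently the finiteness of $\sum_j \lambda_j^s \varphi_j(x)^2$ for $s > \vartheta_{\mathrm{inf}}$, which is standard Mercer theory for Sobolev kernels, cf.\ \cite{steinwart2012mercer,karvonen2025general}). The strict inequality $2\vartheta_1 - \vartheta_2 > \tfrac{d}{2\tau}$ is exactly what makes this sum converge, so it is essential and not merely technical.

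With membership established, the inner-product computation is justified by dominated convergence or simply by continuity of the inner product: write $k^{(\vartheta_1)}(\cdot, z)$ as the $\mathcal{H}_{\vartheta_2}(\Omega)$-limit of its partial sums $\sum_{j=1}^N \lambda_j^{\vartheta_1} \varphi_j(z) \varphi_j$ (this converges in $\mathcal{H}_{\vartheta_2}(\Omega)$ precisely because the tail $\sum_{j>N} \lambda_j^{2\vartheta_1-\vartheta_2}\varphi_j(z)^2 \to 0$), apply $\langle k^{(\vartheta_1)}(\cdot,x), \cdot\rangle_{\mathcal{H}_{\vartheta_2}(\Omega)}$ term by term using $\langle k^{(\vartheta_1)}(\cdot,x), \varphi_j \rangle_{\mathcal{H}_{\vartheta_2}(\Omega)} = \lambda_j^{\vartheta_1-\vartheta_2}\varphi_j(x)$, and pass to the limit. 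I expect the main obstacle to be precisely this bookkeeping around convergence: one must be careful that $k^{(\vartheta_1)}(\cdot,x)$ need not lie in $\mathcal{H}_{\vartheta_2}(\Omega)$ for all pairs $(\vartheta_1,\vartheta_2)$ — only the combination $2\vartheta_1 - \vartheta_2 > \vartheta_{\mathrm{inf}}$ guarantees it — so the argument should foreground the scalar series $\sum_j \lambda_j^{2\vartheta_1-\vartheta_2}\varphi_j(x)\varphi_j(z)$ and its absolute convergence (via Cauchy–Schwarz in $j$ against the diagonal bounds at $x$ and at $z$) as the single technical lemma from which both membership and the identity follow. Everything else is a direct substitution into Eq.~\eqref{eq:dot_product} and Eq.~\eqref{eq:power_kernel}.
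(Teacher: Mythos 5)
Your proposal is correct and follows essentially the same route as the paper: expand both power-kernel sections in the Mercer basis, substitute $\langle k^{(\vartheta_1)}(\cdot,x),\varphi_j\rangle_{L_2(\Omega)}=\lambda_j^{\vartheta_1}\varphi_j(x)$ into the definition of the $\mathcal{H}_{\vartheta_2}(\Omega)$ inner product in Eq.~\eqref{eq:dot_product}, and identify the resulting series with $k^{(2\vartheta_1-\vartheta_2)}(x,z)$ via Eq.~\eqref{eq:power_kernel}, with the condition $2\vartheta_1-\vartheta_2>\frac{d}{2\tau}$ guaranteeing convergence. The only difference is that you spell out the convergence bookkeeping (membership in $\mathcal{H}_{\vartheta_2}(\Omega)$ and absolute convergence of the series) which the paper dispatches with a one-line reference to the discussion around Eq.~\eqref{eq:power_kernel}.
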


\begin{proof}
This is a straightforward calculation,
where convergence is ensured due to $2\vartheta_1 - \vartheta_2 > \frac{d}{2\tau}$ (see the discussion around Eq.~\eqref{eq:power_kernel}):
\begin{align*}
\langle k^{(\vartheta_1)}(\cdot, x), k^{(\vartheta_1)}(\cdot, z) \rangle_{\mathcal{H}_{\vartheta_2}(\Omega)} 
&\equiv \sum_{j=1}^\infty \frac{\langle k^{(\vartheta_1)}(\cdot, x), \varphi_j \rangle_{L_2(\Omega)} \langle k^{(\vartheta_1)}(\cdot, z), \varphi_j \rangle_{L_2(\Omega)}}{\lambda_j^{\vartheta_2}} \\
&= \sum_{j=1}^\infty \frac{1}{\lambda_j^{\vartheta_2}} \cdot \lambda_j^{\vartheta_1} \varphi_j(x) \cdot \lambda_j^{\vartheta_1} \varphi_j(z) \\
&= \sum_{j=1}^\infty \lambda_j^{2\vartheta_1-\vartheta_2} \cdot \varphi_j(x) \varphi_j(z) \\
&\equiv k^{(2\vartheta_1 - \vartheta_2)}(x, z).
\end{align*}
\end{proof}

Note that the kernels $k^{(\vartheta_1)}(\cdot, x)$ are only defined for $\vartheta_1 \geq \frac{d}{2\tau}$, 
however the expression on the left hand side makes also sense for further values of $\vartheta_1, \vartheta_2$ as long as $2\vartheta_1 - \vartheta_2 > \frac{d}{2\tau}$.
For $\vartheta_1 = \vartheta_2 = 1$,
one reobtains the standard reproducing property from Eq.~\eqref{eq:reproducing_property}, i.e.\ $\langle k(\cdot, x), k(\cdot, z) \rangle_{\ns} = k(x, z)$.
Furthermore, 
note that \Cref{prop:generalized_repr_prop} works because according to \cite[Theorem 5.3]{steinwart2012mercer} we have 
\begin{align}
\label{eq:kernel_inclusion}
\begin{aligned}
k(\cdot, x) \in \Ht(\Omega) \quad \forall x \in \Omega \quad 
&\Leftrightarrow \quad k^{(2 - \vartheta)} ~ \text{is well defined} \\
&\Leftrightarrow \quad \vartheta \in \left[0, 2-\frac{d}{2\tau} \right).
\end{aligned}
\end{align}
The statement of \Cref{prop:generalized_repr_prop} does not crucially rely on \Cref{ass:kernel_region},
in fact it works as soon as Mercer's theorem is applicable.

\subsection{Bernstein inequalities}

In this subsection, 
we first slightly extend a Bernstein inequality from the escaping the native space regime:
The Bernstein inequalities in this regime from the literature \cite{zhengjie2025inverse,avesani2025sobolev} 
were actually only stated and proven for RBF kernels.
By carefully checking the proofs of the literature, we see that the Bernstein inequality actually also holds for non-radial kernels.
Second, 
we extend that Bernstein inequality from the escaping the native space regime to the superconvergence regime.
This means, we are interested in bounding $\Vert \cdot \Vert_{\calh_\vartheta(\Omega)}$ norms for $\vartheta > 1$ in terms of weaker norms.
For this, we state and prove the most basic form using the $\Vert \cdot \Vert_{L_2(\Omega)}$ norm, 
and remark that a generalization to further intermediate norms on the left hand side is straightforward by applying interpolation theory.

\begin{prop}[Bernstein inequality in the escaping regime]
\label{prop:escaping_bernstein}
Under \Cref{ass:kernel_region},
let $\vartheta \in [0, 1]$.
Then there exists a constant $C = C_{d, k, \tau, \Omega} > 0$ such that the following Bernstein inverse inequality holds
\begin{align}
\label{eq:escaping_bernstein}
\Vert u \Vert_{\calh_{\vartheta}(\Omega)} 
\asymp \Vert u \Vert_{H^{\vartheta \tau}(\Omega)}
\leq C q_{X}^{-\vartheta\tau} \Vert u \Vert_{L_2(\Omega)}
\end{align}
for all point sets $X \subset \Omega$ and all trial functions $u \in \Sp \{k(\cdot, x), x \in X \}$.
\end{prop}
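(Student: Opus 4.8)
The plan is to reduce the claim to the classical Bernstein inequality \eqref{eq:bernstein_old} and exploit the norm equivalence $\calh_\vartheta(\Omega) \asymp H^{\vartheta\tau}(\Omega)$ that holds for $\vartheta\in[0,1]$ (by interpolation between $\calh_0(\Omega)=L_2(\Omega)$ and $\calh_1(\Omega)=\ns\asymp H^\tau(\Omega)$). The first equivalence $\Vert u\Vert_{\calh_\vartheta(\Omega)}\asymp\Vert u\Vert_{H^{\vartheta\tau}(\Omega)}$ is therefore immediate; the real content is the inequality $\Vert u\Vert_{H^{\vartheta\tau}(\Omega)}\le C q_X^{-\vartheta\tau}\Vert u\Vert_{L_2(\Omega)}$ for trial functions $u\in\Sp\{k(\cdot,x),x\in X\}$, and the point that it holds for \emph{any} Sobolev kernel in the sense of \Cref{ass:kernel_region}, not only RBF kernels.

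First I would set $s=\vartheta\tau\in[0,\tau]$ and revisit the proof of \eqref{eq:bernstein_old} as given in \cite{zhengjie2025inverse} and \cite[Theorem 3.4]{avesani2025sobolev}. The strategy there is: given $u=\sum_{x_j\in X}\alpha_j k(\cdot,x_j)$, one has $\Vert u\Vert_{\ns}^2=\alpha^\top A_X\alpha$ with $A_X$ the kernel matrix, and one controls $\Vert\alpha\Vert_2$ via the lower eigenvalue bound \eqref{eq:estimate_lambda_min}, namely $\Vert\alpha\Vert_2^2\le c_0^{-1}q_X^{d-2\tau}\Vert u\Vert_{L_2(\Omega)}^2$ (after first relating $\Vert u\Vert_{L_2(\Omega)}$ to $\alpha^\top A_X\alpha$ from below, or using a stable local basis). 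Then $\Vert k(\cdot,x_j)\Vert_{H^\tau(\Omega)}$ is bounded uniformly, a sampling/overlap argument using the separation $q_X$ and the interior cone condition controls how many translates can cluster, and one sums up. Crucially, none of these ingredients — the eigenvalue bound \eqref{eq:estimate_lambda_min}, the norm equivalence $\ns\asymp H^\tau(\Omega)$, the uniform bound on $\Vert k(\cdot,x)\Vert_{H^\tau}$, the cone condition — uses radiality; they only use \Cref{ass:kernel_region}. So I would simply transcribe the argument, replacing every invocation of "RBF kernel" by "Sobolev kernel under \Cref{ass:kernel_region}", and check that each step survives. The case $s=\tau$ gives the endpoint $\Vert u\Vert_{H^\tau(\Omega)}\le Cq_X^{-\tau}\Vert u\Vert_{L_2(\Omega)}$, and the case $s=0$ is trivial; intermediate $s=\vartheta\tau$ follows either directly from the same argument or by interpolating these two endpoints (the factor $q_X^{-s}$ being exactly the geometric interpolation of $q_X^0$ and $q_X^{-\tau}$).

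The main obstacle is purely bookkeeping: verifying that the proofs in \cite{zhengjie2025inverse,avesani2025sobolev} really never secretly use the Fourier-analytic structure \eqref{eq:fourier_decay} of a translation-invariant kernel — e.g.\ in bounding $\Vert k(\cdot,x)\Vert_{H^\tau(\Omega)}$ uniformly in $x$, one wants $\sup_{x\in\Omega}\Vert k(\cdot,x)\Vert_{H^\tau(\Omega)}<\infty$, which for a general continuous kernel with $\ns\asymp H^\tau(\Omega)$ follows from $\Vert k(\cdot,x)\Vert_{\ns}=\sqrt{k(x,x)}$ and continuity of $k$ on the compact $\Omega$ together with the norm equivalence, but this deserves an explicit line. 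A secondary subtlety is whether the lower bound relating $\Vert u\Vert_{L_2(\Omega)}$ to the coefficient vector (the $L_2$-stability of the basis $\{k(\cdot,x_j)\}$) is available in the non-radial setting; I expect it follows again from \eqref{eq:estimate_lambda_min} applied suitably or from a Riesz-basis argument, but one should confirm the constant depends only on $d,k,\tau,\Omega$. Once these checks are in place, the proof is essentially a citation plus the interpolation remark, and I would present it as: \textbf{(1)} record $\calh_\vartheta(\Omega)\asymp H^{\vartheta\tau}(\Omega)$ for $\vartheta\in[0,1]$; \textbf{(2)} prove $\Vert u\Vert_{H^\tau(\Omega)}\le Cq_X^{-\tau}\Vert u\Vert_{L_2(\Omega)}$ by adapting the cited proof, noting radiality is never used; \textbf{(3)} interpolate with the trivial $L_2$ bound to get the general exponent $\vartheta\tau$.
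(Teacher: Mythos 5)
Your steps (1) and (3) are fine and match the paper: for $\vartheta\in[0,1]$ the equivalence $\calh_\vartheta(\Omega)\asymp H^{\vartheta\tau}(\Omega)$ is immediate, and the intermediate exponents follow from the endpoint case by Gagliardo--Nirenberg interpolation, exactly as in the paper's proof. The gap is in step (2), which is the actual content. First, you misdescribe the proofs in \cite{zhengjie2025inverse,avesani2025sobolev}: they are not based on the eigenvalue bound \eqref{eq:estimate_lambda_min} and control of the coefficient vector, but on approximating the trial function $u$ by a band-limited (Paley--Wiener) function $f_\sigma$ with bandwidth $\sigma\asymp q_X^{-1}$ that agrees with $u$ on $X$, then using that $u$ is the minimum-norm interpolant of $f_\sigma$ in $\ns\asymp H^\tau(\Omega)$ together with the Bernstein inequality for band-limited functions, and finally Gagliardo--Nirenberg. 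Verifying that this chain never uses radiality is precisely what the paper does; your "transcription" would have to transcribe that argument, not the one you sketch.

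Second, the argument you do sketch does not give the sharp exponent. The bound \eqref{eq:estimate_lambda_min} yields $\Vert\alpha\Vert_{\ell_2}^2\le c_0^{-1}q_X^{d-2\tau}\,\alpha^\top A_X\alpha=c_0^{-1}q_X^{d-2\tau}\Vert u\Vert_{\ns}^2$, i.e.\ with the \emph{native space} norm on the right; replacing it by $q_X^{-2\tau}\Vert u\Vert_{L_2(\Omega)}^2$ is exactly the inequality you are trying to prove, so as written the step is circular. If instead you bound $\Vert u\Vert_{L_2(\Omega)}^2=\alpha^\top A^{(2)}\alpha$ from below by the smallest eigenvalue of the $L_2$-Gram matrix (the kernel matrix of the power kernel $k^{(2)}$, whose smallest eigenvalue scales like $q_X^{4\tau-d}$), you only obtain $\Vert\alpha\Vert_{\ell_2}\lesssim q_X^{d/2-2\tau}\Vert u\Vert_{L_2(\Omega)}$ --- this is the correct coefficient bound (cf.\ \Cref{prop:estimate_lp_coeff_norm}, whose proof in the paper in fact \emph{uses} \eqref{eq:escaping_bernstein}), and it is weaker by a factor $q_X^{-\tau}$ than the bound $\Vert\alpha\Vert_{\ell_2}^2\le c_0^{-1}q_X^{d-2\tau}\Vert u\Vert_{L_2(\Omega)}^2$ you assert. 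Feeding the correct coefficient bound into the "sum up" step (via $\Vert u\Vert_{\ns}^2\le\lambda_{\max}(A_X)\Vert\alpha\Vert_{\ell_2}^2$ with $\lambda_{\max}(A_X)\lesssim q_X^{-d}$, or via $\Vert\alpha\Vert_{\ell_1}\le\sqrt{|X|}\,\Vert\alpha\Vert_{\ell_2}$ and a uniform bound on $\Vert k(\cdot,x)\Vert_{H^\tau(\Omega)}$) produces only $\Vert u\Vert_{H^\tau(\Omega)}\lesssim q_X^{-2\tau}\Vert u\Vert_{L_2(\Omega)}$, i.e.\ the lossy rate. Separate extreme-eigenvalue bounds cannot recover the sharp rate $q_X^{-\tau}$; one needs either the band-limited interpolation argument of the cited works (the paper's route) or a genuine simultaneous spectral-alignment comparison of the matrices $A_X$ and $A_X^{(2)}$ in the spirit of \cite{wenzel2025spectral}, which is a much stronger input than \eqref{eq:estimate_lambda_min}.
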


The proof of \Cref{prop:escaping_bernstein} is just a careful checking of the proofs given in the literature, and does not require any new major ideas.
Thus we provide the proof in \Cref{subsec:bernstein_ineq_proof}.

\begin{prop}[Bernstein inequality in the superconvergence regime]
\label{prop:superconv_bernstein}
Under \Cref{ass:kernel_region},
let $\vartheta \in [1, 2 - \frac{d}{2\tau})$.
Then there exists a constant $C = C_{d, k, \tau, \Omega} > 0$ such that the following Bernstein inverse inequality holds
\begin{align}
\label{eq:superconv_bernstein}
\Vert u \Vert_{\calh_{\vartheta}(\Omega)} \leq C q_{X}^{-\vartheta\tau} \Vert u \Vert_{L_2(\Omega)}
\end{align}
for all point sets $X \subset \Omega$ and all trial functions $u \in \Sp \{k(\cdot, x), x \in X \}$.
\end{prop}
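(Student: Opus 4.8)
The plan is to turn the inequality into a Gram-matrix estimate and then to climb up the power scale by applying the escaping-regime Bernstein inequality \Cref{prop:escaping_bernstein} repeatedly to suitable power kernels.

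Write $u = \sum_j \alpha_j k(\cdot,x_j)$ with $X = \{x_j\}$. Since $\vartheta < 2 - \tfrac{d}{2\tau}$, \eqref{eq:kernel_inclusion} gives $k(\cdot,x)\in\mathcal{H}_\vartheta(\Omega)$ for all $x$, hence $u\in\mathcal{H}_\vartheta(\Omega)$, and the generalized reproducing property \Cref{prop:generalized_repr_prop} (with $\vartheta_1=1$ and $\vartheta_2\in\{\vartheta,0\}$) yields
\[
\|u\|_{\mathcal{H}_\vartheta(\Omega)}^2 = \alpha^\top A_X^{(2-\vartheta)}\alpha,\qquad
\|u\|_{L_2(\Omega)}^2 = \alpha^\top A_X^{(2)}\alpha,
\]
where $A_X^{(s)} := \big(k^{(s)}(x_i,x_j)\big)_{i,j}$ denotes the Gram matrix of the power kernel $k^{(s)}$. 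Thus it suffices to prove the matrix estimate $\alpha^\top A_X^{(2-\vartheta)}\alpha \le C q_X^{-2\vartheta\tau}\,\alpha^\top A_X^{(2)}\alpha$ for all coefficient vectors $\alpha$.

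The key step is an \emph{exponent-shift estimate}: for all $s,s'$ with $\tfrac{d}{2\tau} < s \le s' \le 2s$ and $s' > \tfrac{d}{\tau}$ there is a constant $C$ with $\alpha^\top A_X^{(s)}\alpha \le C q_X^{-2(s'-s)\tau}\,\alpha^\top A_X^{(s')}\alpha$ for all $X\subset\Omega$. To prove it, set $t := s'/2 \in (\tfrac{d}{2\tau},1]$; then $k^{(t)}$ is again a Sobolev kernel, since $\mathcal{H}_{k^{(t)}} = \mathcal{H}_t(\Omega) \asymp H^{t\tau}(\Omega)$ with $t\tau > d/2$ (the norm equivalence for $t\le 1$ follows by interpolation between $\mathcal{H}_0(\Omega)=L_2(\Omega)$ and $\mathcal{H}_1(\Omega)=\ns\asymp H^\tau(\Omega)$), so \Cref{ass:kernel_region} holds for $k^{(t)}$ and \Cref{prop:escaping_bernstein} applies to it. Writing $w := \sum_j \alpha_j k^{(t)}(\cdot,x_j)$ and noting that the $\vartheta^*$-power space of $k^{(t)}$ is $\mathcal{H}_{t\vartheta^*}(\Omega)$, I would choose $\vartheta^* := 2 - s/t \in [0,1]$ (which lies in $[0,1]$ precisely because $s\le s'=2t$ and $s'\le 2s$); another application of \Cref{prop:generalized_repr_prop} then gives $\|w\|_{\mathcal{H}_{s'-s}(\Omega)}^2 = \alpha^\top A_X^{(s)}\alpha$ and $\|w\|_{L_2(\Omega)}^2 = \alpha^\top A_X^{(s')}\alpha$, and \Cref{prop:escaping_bernstein} for $k^{(t)}$ applied to $w$ yields exactly $\alpha^\top A_X^{(s)}\alpha = \|w\|_{\mathcal{H}_{s'-s}(\Omega)}^2 \le C q_X^{-2(s'-s)\tau}\|w\|_{L_2(\Omega)}^2 = C q_X^{-2(s'-s)\tau}\alpha^\top A_X^{(s')}\alpha$.

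Finally I would iterate along a finite chain $2-\vartheta = s_0 < s_1 < \dots < s_m = 2$ with $s_i \le 2 s_{i-1}$ and $s_i > \tfrac{d}{\tau}$ for $i\ge 1$. Such a chain exists because $2(2-\vartheta) > \tfrac{d}{\tau}$ (as $2-\vartheta > \tfrac{d}{2\tau}$), so $s_1$ can already be placed above $\tfrac{d}{\tau}$, after which one doubles repeatedly and adjusts the last ratio to land exactly on $2$. Applying the exponent-shift estimate along the chain and multiplying, the exponents telescope,
\[
\alpha^\top A_X^{(2-\vartheta)}\alpha \;\le\; C\, q_X^{-2\tau\sum_{i=1}^{m}(s_i-s_{i-1})}\,\alpha^\top A_X^{(2)}\alpha \;=\; C\, q_X^{-2\vartheta\tau}\,\alpha^\top A_X^{(2)}\alpha,
\]
which is the desired matrix estimate; combining with the two identities above and taking square roots gives \eqref{eq:superconv_bernstein}. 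I expect the main obstacle to be keeping the chain admissible: each shift requires $k^{(s_i/2)}$ to be a genuine Sobolev kernel, i.e. $s_i/2 > \tfrac{d}{2\tau}$, while all ratios must stay $\le 2$, and it is exactly the hypothesis $\vartheta < 2-\tfrac{d}{2\tau}$ that leaves enough room for the first step; beyond this, no eigenvalue-decay or quasi-uniformity input is needed other than what already enters \Cref{prop:escaping_bernstein}.
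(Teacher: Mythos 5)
Your argument is correct in substance, but it is a genuinely different proof from the one in the paper. The paper also starts from the Gram-matrix identity $\Vert u \Vert_{\mathcal{H}_\vartheta(\Omega)}^2 = \alpha^\top A^{(2-\vartheta)}\alpha$ via \Cref{prop:generalized_repr_prop}, but then inserts $A^{1/2}A^{-1/2}$ to get $\Vert u \Vert_{\mathcal{H}_\vartheta(\Omega)}^2 \leq \Vert u \Vert_{\mathcal{H}_1(\Omega)}^2 \cdot \max_{\beta \neq 0} \frac{\beta^\top A^{(2-\vartheta)}\beta}{\beta^\top A \beta}$, applies \Cref{prop:escaping_bernstein} once (to $\Vert u \Vert_{\mathcal{H}_1(\Omega)}$, giving $C q_X^{-2\tau}\Vert u\Vert_{L_2(\Omega)}^2$), and disposes of the Rayleigh quotient by the external spectral alignment result \cite[Corollary 4.6]{wenzel2025spectral}, which yields the factor $Cq_X^{(1-\vartheta)2\tau}$. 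You instead replace that spectral alignment input by a telescoping chain of exponent shifts, each shift being \Cref{prop:escaping_bernstein} applied to the fractional power kernel $k^{(s'/2)}$ together with \Cref{prop:generalized_repr_prop}; the bookkeeping of the chain ($s_0 = 2-\vartheta > \frac{d}{2\tau}$, ratios at most $2$, all later nodes above $\frac{d}{\tau}$, landing on $2$) is correct and the exponents do telescope to $q_X^{-2\vartheta\tau}$. What your route buys is self-containedness: no reliance on \cite{wenzel2025spectral}, only tools already proved in the paper. What it costs is that each application of \Cref{prop:escaping_bernstein} to $k^{(t)}$, $t = s'/2 < 1$, requires verifying that $k^{(t)}$ itself satisfies \Cref{ass:kernel_region} -- i.e.\ that the power kernel is a continuous, strictly positive definite kernel whose RKHS is norm-equivalent to $H^{t\tau}(\Omega)$ with $t\tau > d/2$. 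The norm equivalence is the interpolation statement the paper already invokes for $\vartheta \in [0,1]$, and strict positive definiteness follows since $\mathcal{H}_t(\Omega) \asymp H^{t\tau}(\Omega)$ separates finite point sets, but continuity of the fractional kernel $k^{(t)}$ is not established anywhere in the paper and would need an explicit argument (e.g.\ via the Mercer-type results of \cite{steinwart2012mercer}); alternatively one can note that the proof of \Cref{prop:escaping_bernstein} only uses the Sobolev norm equivalence, not continuity of the kernel. With that caveat addressed, your proof goes through and even makes transparent why the restriction $\vartheta < 2 - \frac{d}{2\tau}$ enters, namely through the admissibility of the first link of the chain.
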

\begin{proof}
Consider $u = \sum_{j=1}^M \alpha_j k(\cdot, x_j)$.
By using the generalized reproducing property from \Cref{prop:generalized_repr_prop} we have
\begin{align*}
\Vert u \Vert_{\mathcal{H}_{\vartheta}(\Omega)}^2 
=& \sum_{i,j=1}^M \alpha_i \alpha_j \langle k(\cdot, x_i), k(\cdot, x_j) \rangle_{\mathcal{H}_{\vartheta}(\Omega)} 
=~ \sum_{i,j=1}^M \alpha_i \alpha_j k^{(2-\vartheta)}(x_i, x_j) \\
=:&~ \alpha^\top A^{(2-\vartheta)} \alpha.
\end{align*}
Here we introduced $A^{(2-\vartheta)}$ for the kernel matrix using the kernel $k^{(2-\vartheta)}$.
We proceed by inserting $A^{1/2} A^{-1/2}$, where $A$ denotes the kernel matrix using the kernel $k$:
\begin{align*}
=&~ \alpha^\top A^{1/2} A^{-1/2} A^{(2-\vartheta)} A^{-1/2} A^{1/2} \alpha \\
\leq&~ \Vert A^{1/2} \alpha \Vert^2 \cdot \Vert A^{-1/2} A^{(2-\vartheta)} A^{-1/2} \Vert_{2, 2} \\
\leq&~ (\alpha^\top A \alpha) \cdot \lambda_{\max}(A^{-1/2} A^{(2-\vartheta)} A^{-1/2}) \\
=&~ \Vert u \Vert_{\ns}^2 \cdot \max_{0 \neq \alpha \in \R^n} \frac{\alpha^\top A^{-1/2} A^{(2-\vartheta')} A^{-1/2} \alpha}{\alpha^\top \alpha} \\
=&~ \Vert u \Vert_{\ns}^2 \cdot \max_{0 \neq \beta \in \R^n} \frac{\beta^\top A^{(2-\vartheta')} \beta}{\beta^\top A \beta}. 
\end{align*}
For the first factor, we can directly apply the Bernstein inequality Eq.~\eqref{eq:escaping_bernstein}, giving the upper bound $\left( C q_X^{-\tau} \Vert u \Vert_{L_2(\Omega)} \right)^2$.
For the second factor we make use of the spectral alignment result \cite[Corollary 4.6]{wenzel2025spectral},
giving the upper bound $C q_X^{(1-\vartheta)2\tau}$.
Simplifying the expression yields the result.
\end{proof}
Note that \Cref{prop:superconv_bernstein} extends \Cref{prop:escaping_bernstein} in a natural way to the superconvergence norms,
and the restriction $\vartheta < 2 - \frac{d}{2\tau}$ is necessary due to Eq.~\eqref{eq:kernel_inclusion}.
Both statements together provide Bernstein inequalities for the full range $\vartheta \in [0, 2-\frac{d}{2\tau})$,
which we formulate as the following theorem:
\begin{theorem}[Bernstein inequality]
\label{th:bernstein}
Under \Cref{ass:kernel_region},
let $\vartheta \in [0, 2 - \frac{d}{2\tau})$.
Then there exists a constant $C = C_{d, k, \tau, \Omega} > 0$ such that the following Bernstein inverse inequality holds
\begin{align}
\label{eq:superconv_bernstein}
\Vert u \Vert_{\calh_{\vartheta}(\Omega)} \leq C q_{X}^{-\vartheta\tau} \Vert u \Vert_{L_2(\Omega)}
\end{align}
for all point sets $X \subset \Omega$ and all trial functions $u \in \Sp \{k(\cdot, x), x \in X \}$.
\end{theorem}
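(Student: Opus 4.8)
The plan is to obtain \Cref{th:bernstein} by simply concatenating the two preceding propositions, which between them already exhaust the admissible range of $\vartheta$. Concretely, I would fix $\vartheta \in [0, 2 - \frac{d}{2\tau})$ and distinguish two cases. If $\vartheta \in [0,1]$, I invoke \Cref{prop:escaping_bernstein}, using only its conclusion $\Vert u \Vert_{\mathcal{H}_\vartheta(\Omega)} \leq C q_X^{-\vartheta\tau}\Vert u \Vert_{L_2(\Omega)}$ (the norm equivalence $\mathcal{H}_\vartheta(\Omega) \asymp H^{\vartheta\tau}(\Omega)$ having been absorbed into the constant). If $\vartheta \in [1, 2 - \frac{d}{2\tau})$, I invoke \Cref{prop:superconv_bernstein} verbatim. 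In either case one obtains exactly the asserted inequality for all $X \subset \Omega$ and all $u \in \Sp\{k(\cdot,x), x\in X\}$, so nothing further is required.

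The only point worth a remark is consistency at the overlap $\vartheta = 1$, where both propositions apply: they yield the same estimate $\Vert u \Vert_{\ns} \leq C q_X^{-\tau}\Vert u \Vert_{L_2(\Omega)}$ (with possibly different constants), so the case split is unambiguous. If in addition one wants the constant to be genuinely independent of $\vartheta$, as the notation $C_{d,k,\tau,\Omega}$ suggests, then on $[0,1]$ this follows from the single endpoint estimate at $\vartheta = 1$ together with the interpolation inequality \eqref{eq:power_space_interpol}, since $\Vert u \Vert_{\mathcal{H}_\vartheta(\Omega)} \leq \Vert u \Vert_{L_2(\Omega)}^{1-\vartheta}\Vert u \Vert_{\ns}^{\vartheta} \leq (C q_X^{-\tau})^{\vartheta}\Vert u \Vert_{L_2(\Omega)}$ with $C^{\vartheta} \leq \max(1, C)$; on the superconvergence side one would keep the $\vartheta$-dependence, since the constant arising from the spectral-alignment step in \Cref{prop:superconv_bernstein} is expected to deteriorate as $\vartheta \uparrow 2 - \frac{d}{2\tau}$, the power kernel $k^{(2-\vartheta)}$ degenerating in that limit.

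I do not anticipate any real obstacle here: all the substantive work is already contained in \Cref{prop:escaping_bernstein} and \Cref{prop:superconv_bernstein}, and \Cref{th:bernstein} is essentially a bookkeeping statement that repackages them into the single clean range $\vartheta \in [0, 2 - \frac{d}{2\tau})$ that is used in the proof of the main result.
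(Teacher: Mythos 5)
Your proposal is correct and coincides with the paper's own treatment: \Cref{th:bernstein} is stated there precisely as the combination of \Cref{prop:escaping_bernstein} for $\vartheta \in [0,1]$ and \Cref{prop:superconv_bernstein} for $\vartheta \in [1, 2-\frac{d}{2\tau})$, with no further argument needed beyond the case split you describe.
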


\subsection{Construction of density functions}
\label{subsec:construction_density_func}

In this subsection,
we provide a construction of density functions $v_n$,
such that $Tv_n$ approximates the function of interest $f$.
We first start with motivating this approach,
then giving the precise definition and deriving necessary properties.

\subsubsection{Motivation}

The motivation for \Cref{subsec:construction_density_func} is to establish the mathematical foundations of the proof technique for proving the main result in the range $\vartheta \in (2-\frac{d}{2\tau}, 2]$.
In this range, it holds $k(\cdot, x) \notin \calh_\vartheta(\Omega)$ by Eq.~\eqref{eq:kernel_inclusion}, 
thus one can no longer rely on the same proof strategy as for the case $\vartheta \in (0, 2-\frac{d}{2\tau})$,
which uses $s_{f, X_n} = \sum_{j=1}^{|X_n|} \alpha_j k(\cdot, x_j)$ as ansatz functions.

Thus the idea is to use the characterization of $\calh_\vartheta(\Omega)$ as the $\Vert \cdot \Vert_{\calh_\vartheta(\Omega)}$ norm closure of $TL_2(\Omega)$, see Eq.~\eqref{eq:power_space_via_closure}.
Therefore this subsection will introduce a construction of density functions $(v_n)_{n \in \N} \subset L_2(\Omega)$,
such that $Tv_n \rightarrow f$.
These density functions $v_n$ will be defined based on the coefficients $\alpha_j$ of $s_{f, Z} = \sum_{j=1}^{|Z|} \alpha_j k(\cdot, z_j)$ -- or viewed the other way round:
The coefficients $(\alpha_j)_{j=1}^{|Z|}$ constitute a discretization of the density function.
The motivation for this is given by 
\begin{align}
\label{eq:motivation_dirac}
k(\cdot, x) = \lim_{\varepsilon \rightarrow 0} T\delta_{x, \varepsilon} = \lim_{\varepsilon \rightarrow 0} \int_\Omega k(\cdot, z) \delta_{x, \varepsilon}(z) ~ \mathrm{d}z,
\end{align}
where $(\delta_{x, \varepsilon})_{\varepsilon > 0}$ is an approximation of the dirac delta for $\varepsilon \rightarrow 0$.
In particular we will make use of $\delta_{x, \varepsilon}(z) = \frac{1}{\varepsilon^d} \cdot \mathbb{1}_{x+[0, \varepsilon]^d}(z)$,
which allows to nicely decompose the region $\Omega$ and approximate it using small cubes.

\subsubsection{Assumption on kernel-based approximation method}

In order to work as generally as possible, 
we introduce the following assumption on the kernel-based approximation method:

\begin{assumption}
\label{ass:method}
Let $k: \Omega \times \Omega \rightarrow \R$ be a continuous strictly positive definite kernel on a compact Lipschitz region $\Omega \subset \R^d$.
We consider a point-based kernel approximation method,
i.e.\ for any pairwise distinct $X \subset \Omega$ we obtain an approximant
\begin{align*}
s_{f, X} =: \sum_{x_i \in X} \alpha_{f, X; x_i} k(\cdot, x_i)
\end{align*}
with coefficients $(\alpha_{f, X; x_i})_{i=1, ..., |X|} \subset \R^{|X|}$.
For any $f \in \mathcal{C}(\Omega)$,
we assume the coefficients $(\alpha_{f, X; x_i})_{i=1, ..., |X|}$ to depend continuously on $X$.
\end{assumption}

Next we will see,
that this assumption is frequently satisfied,
e.g.\ for kernel interpolation or regularized kernel interpolation.

\begin{prop}
\label{prop:method}
Let $k: \Omega \times \Omega \rightarrow \R$ be a continuous strictly positive definite kernel on a compact Lipschitz region $\Omega \subset \R^d$. 
Then both kernel interpolation as well a regularized kernel interpolation satisfy \Cref{ass:method}.
\end{prop}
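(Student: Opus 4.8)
The plan is to prove Proposition~\ref{prop:method} by exhibiting the coefficient map explicitly in each of the two cases and showing it depends continuously on the point set $X$. Throughout, fix $N := |X|$ and regard a point configuration as an element of the open subset $\Omega^N_{\neq} := \{(x_1,\dots,x_N) \in \Omega^N : x_i \neq x_j \text{ for } i \neq j\}$; continuity of the coefficients is to be understood as continuity of the map $\Omega^N_{\neq} \to \R^N$, $(x_1,\dots,x_N) \mapsto (\alpha_{f,X;x_i})_{i=1}^N$, for fixed $f \in \mathcal{C}(\Omega)$.

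First I would treat plain kernel interpolation. The coefficient vector $\alpha = \alpha(x_1,\dots,x_N)$ solves the linear system $A_X \alpha = f_X$, where $A_X = (k(x_i,x_j))_{i,j}$ and $f_X = (f(x_i))_{i}$. Since $k$ is continuous on $\Omega \times \Omega$, the entry maps $(x_1,\dots,x_N) \mapsto k(x_i,x_j)$ are continuous, so $(x_1,\dots,x_N) \mapsto A_X$ is continuous; likewise $(x_1,\dots,x_N) \mapsto f_X$ is continuous because $f$ is continuous. Because $k$ is strictly positive definite, $A_X$ is invertible for every configuration in $\Omega^N_{\neq}$, and matrix inversion is continuous on the open set of invertible matrices (Cramer's rule, or continuity of $A \mapsto A^{-1}$). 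Hence $\alpha(x_1,\dots,x_N) = A_X^{-1} f_X$ is a composition of continuous maps and is therefore continuous. For regularized kernel interpolation the argument is identical except that the system becomes $(A_X + \mu I)\alpha = f_X$ (or, depending on the precise regularization convention, $(A_X + \mu I)\alpha = f_X$ with a fixed regularization parameter $\mu > 0$, or a data-size-dependent but configuration-independent $\mu$); the matrix $A_X + \mu I$ is symmetric positive definite hence invertible for every configuration, depends continuously on $(x_1,\dots,x_N)$, and the same composition-of-continuous-maps argument applies.

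I expect no genuinely hard step here; the proposition is essentially a bookkeeping statement that linear-solve coefficients inherit continuity from the continuity of $k$ and $f$ together with the invertibility guaranteed by strict positive definiteness. The only point deserving care is making precise the sense in which ``the coefficients depend continuously on $X$'' is meant in Assumption~\ref{ass:method} --- namely continuity in the point locations for a fixed cardinality --- and checking that the regularization parameter in ``regularized kernel interpolation'' is not allowed to vary with the point locations (if it were a fixed function of $N$ only, the argument still goes through verbatim; if it depended discontinuously on $X$, the statement could fail, so one should state the convention used). Once that is pinned down, the proof is just the two displayed linear systems plus the remark that $A \mapsto A^{-1}$ is continuous on $\mathrm{GL}(N,\R)$.
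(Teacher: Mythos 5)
Your proposal is correct and follows essentially the same route as the paper: write the coefficients as the solution of $(A_X+\lambda\mathbb{1})\alpha_X = f(X)$, note that $X \mapsto A_X$ and $X \mapsto f(X)$ are continuous by continuity of $k$ and $f$, invoke invertibility from strict positive definiteness (plus continuity of matrix inversion), and conclude by composition. Your added care about fixing the cardinality and keeping the regularization parameter independent of the point locations only makes explicit what the paper leaves implicit.
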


\begin{proof}
For any pairwise distinct $X \subset \Omega$,
the coefficients $\alpha_X := (\alpha_{f, X; x_i})_{i=1, ..., |X|} \in \R^{|X|}$ for kernel interpolation and regularized kernel interpolation are determined by the solution of the linear equation system 
\begin{align*}
(A_X + \lambda \mathbb{1}) \alpha_X &= f(X), \\
\Leftrightarrow ~~ \alpha_X &= (A_X + \lambda \mathbb{1})^{-1} f(X).
\end{align*}
Here $A_X$ is the kernel matrix as defined in Eq.~\eqref{eq:kernel_matrix} and $f(X) := (f(x_i))_{i=1, ..., |X|} \in \R^{|X|}$.
Furthermore it holds $\lambda = 0$ for kernel interpolation and $\lambda > 0$ for regularized kernel interpolation.

Since the kernel $k$ is assumed to be continuous on $\Omega \times \Omega$ and $f$ is continuous on $\Omega$,
both $X \mapsto (k(X, X) + \lambda)^{-1}$ and $X \mapsto f(X)$ are continuous.
Thus it follows that $X \mapsto \alpha_X$ is continuous.
\end{proof}

\Cref{ass:method} enables us to develop the subsequent tools for general kernel-based approximation methods, 
not being limited to kernel interpolation.

\subsubsection{Definition of density functions}

In order to obtain a meaningful discretization of the integral of the kernel integral operator $T$, 
we will here consider sets of centers $X$ given by the intersection of a grid with the region $\Omega$.
To be precise, we define for $n \in \N$
\begin{align}
\begin{aligned}
\label{eq:def_grid_like}
Z_n :=&~ \{ z \in 2^{-n} \Z^d ~|~ z + [0, 2^{-n}]^d \subset \Omega \} , \\
\end{aligned}
\end{align}
Note that, by definition, we have $Z_n + b \in \Omega$ for all $b \in [0, 2^{-n}]^d$
and furthermore $Z_n \subseteq Z_{n+1} \subset \Omega$ for all $n \in \N$.
We make use of $Z$ instead of $X$ to distinguish these grid-based sets from possibly scattered sets.
If follows immediately that
\begin{align*}
Z_n + [0, 2^{-n}]^d \subset Z_{n+1} + [0, 2^{-(n+1)}]^d \subset \Omega.
\end{align*}
Such constructions are also considered in finite element literature, 
see e.g.\ Whitney decomposition.
If $\Omega$ is Jordan measurable, which is the case for Lipschitz regions,
then this approximation will fill $\Omega$ from inside, see \Cref{fig:vis_discretization_region}.

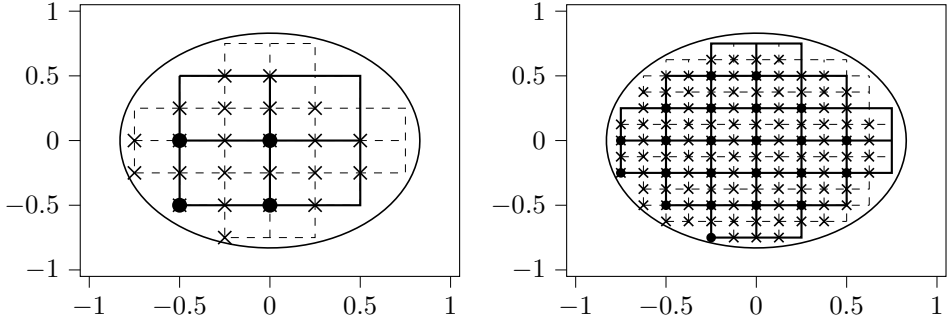
\begin{figure}[t]
\setlength\fwidth{.54\textwidth}
\centering
\begin{tikzpicture}

\definecolor{darkgray176}{RGB}{176,176,176}

\begin{axis}[
width=0.951\fwidth,
height=0.75\fwidth,
tick align=outside,
tick pos=left,
x grid style={darkgray176},
xmin=-1.05, xmax=1.05,
xtick style={color=black},
y grid style={darkgray176},
ymin=-1.05, ymax=1.05,
ytick style={color=black}
]
\addplot [thick, black]
table {%
-0.5 0.5
0.5 0.5
0.5 -0.5
-0.5 -0.5
-0.5 0.5
};
\addplot [thick, black]
table {%
-0.5 0
0.5 0
};
\addplot [thick, black]
table {%
0 0.5
0 -0.5
};
\addplot [line width=0.24pt, black, dashed]
table {%
-0.5 0.5
-0.5 0.25
-0.75 0.25
-0.75 -0.25
-0.5 -0.25
-0.5 -0.5
-0.25 -0.5
-0.25 -0.75
0.25 -0.75
0.25 -0.5
0.5 -0.5
0.5 -0.25
0.75 -0.25
0.75 0.25
0.5 0.25
0.5 0.5
0.25 0.5
0.25 0.75
-0.25 0.75
-0.25 0.5
-0.5 0.5
};
\addplot [line width=0.24pt, black, dashed]
table {%
-0.5 0.25
0.5 0.25
};
\addplot [line width=0.24pt, black, dashed]
table {%
-0.75 0
0.75 0
};
\addplot [line width=0.24pt, black, dashed]
table {%
-0.5 -0.25
0.5 -0.25
};
\addplot [line width=0.24pt, black, dashed]
table {%
0.25 -0.5
0.25 0.5
};
\addplot [line width=0.24pt, black, dashed]
table {%
0 -0.75
0 0.75
};
\addplot [line width=0.24pt, black, dashed]
table {%
-0.25 -0.5
-0.25 0.5
};
\addplot [semithick, black, mark=*, mark size=2.5, mark options={solid}, only marks]
table {%
-0.5 0
-0.5 -0.5
0 -0.5
0 0
};
\addplot [semithick, black, mark=x, mark size=3.5, mark options={solid}, only marks]
table {%
-0.25 0.5
0 0.5
-0.5 0.25
-0.25 0.25
0 0.25
0.25 0.25
-0.75 0
-0.5 0
-0.25 0
0 0
0.25 0
0.5 0
-0.75 -0.25
-0.5 -0.25
-0.25 -0.25
0 -0.25
0.25 -0.25
0.5 -0.25
-0.5 -0.5
-0.25 -0.5
0 -0.5
0.25 -0.5
-0.25 -0.75
};
\addplot [semithick, black]
table {%
0.83 0
0.828328941471664 0.0526418533149485
0.82332249464956 0.105071736466212
0.815000818728047 0.15707853281914
0.803397422158976 0.208452829360296
0.788559027724985 0.258987759929744
0.77054538440334 0.308479838198072
0.749429026777896 0.356729777034012
0.725294982967921 0.403543290963389
0.69824043224988 0.448731878488146
0.668374313750779 0.492113581115352
0.635816887788752 0.533513716039828
0.600699251627208 0.572765579530153
0.56316281159242 0.609711118185753
0.523358713680154 0.644201565362158
0.481447234944094 0.676098040201779
0.437597138116717 0.705272106858097
0.391984992061327 0.731606291661493
0.344794460791566 0.75499455614425
0.296215563921254 0.775342724020038
0.246443911522468 0.792568860398581
0.195679916472825 0.80660360170854
0.144127987463552 0.817390435000133
0.0919957059178392 0.824885925502841
0.039492990133706 0.829059891521897
-0.0131687499828907 0.829895525969316
-0.0657774641911345 0.827389464040112
-0.118121315766827 0.821551796761174
-0.169989534494108 0.812406030358266
-0.221173265362729 0.799988991604752
-0.27146640955346 0.784350679533175
-0.320666454325297 0.765554064106802
-0.368575288462793 0.743674832661809
-0.415 0.718801085141084
-0.459753653008871 0.69103297934686
-0.502656040324263 0.660482327640591
-0.543534409174587 0.627272146714034
-0.582224156796247 0.591536162244456
-0.618569493230876 0.553418270428522
-0.652424068636514 0.513071958563102
-0.683651562586761 0.4706596870061
-0.712126232985031 0.426352235005927
-0.737733422383586 0.380328013033751
-0.760370019668618 0.332772344387489
-0.779944875252304 0.283876718960305
-0.796379168100033 0.233838022178387
-0.809606723114888 0.182857742212829
-0.819574277601407 0.13114115865788
-0.82624169573566 0.0788965159424714
-0.829582130178044 0.0263341848033962
-0.829582130178044 -0.0263341848033963
-0.82624169573566 -0.0788965159424716
-0.819574277601407 -0.131141158657881
-0.809606723114888 -0.182857742212829
-0.796379168100033 -0.233838022178387
-0.779944875252304 -0.283876718960305
-0.760370019668618 -0.332772344387489
-0.737733422383586 -0.380328013033751
-0.712126232985031 -0.426352235005927
-0.683651562586761 -0.4706596870061
-0.652424068636513 -0.513071958563102
-0.618569493230876 -0.553418270428522
-0.582224156796247 -0.591536162244456
-0.543534409174587 -0.627272146714034
-0.502656040324263 -0.660482327640591
-0.459753653008872 -0.69103297934686
-0.415 -0.718801085141084
-0.368575288462792 -0.743674832661809
-0.320666454325297 -0.765554064106802
-0.27146640955346 -0.784350679533175
-0.221173265362729 -0.799988991604752
-0.169989534494108 -0.812406030358266
-0.118121315766827 -0.821551796761174
-0.0657774641911347 -0.827389464040112
-0.0131687499828903 -0.829895525969316
0.0394929901337062 -0.829059891521897
0.091995705917839 -0.824885925502841
0.144127987463552 -0.817390435000133
0.195679916472825 -0.80660360170854
0.246443911522468 -0.792568860398581
0.296215563921253 -0.775342724020039
0.344794460791566 -0.75499455614425
0.391984992061327 -0.731606291661493
0.437597138116717 -0.705272106858097
0.481447234944094 -0.676098040201779
0.523358713680154 -0.644201565362158
0.56316281159242 -0.609711118185753
0.600699251627208 -0.572765579530153
0.635816887788752 -0.533513716039828
0.668374313750779 -0.492113581115351
0.69824043224988 -0.448731878488146
0.725294982967921 -0.403543290963389
0.749429026777895 -0.356729777034013
0.77054538440334 -0.308479838198072
0.788559027724985 -0.258987759929744
0.803397422158976 -0.208452829360296
0.815000818728047 -0.15707853281914
0.82332249464956 -0.105071736466212
0.828328941471664 -0.0526418533149485
0.83 -2.03291368658461e-16
};
\end{axis}

\end{tikzpicture}
\begin{tikzpicture}

\definecolor{darkgray176}{RGB}{176,176,176}

\begin{axis}[
width=0.951\fwidth,
height=0.75\fwidth,
tick align=outside,
tick pos=left,
x grid style={darkgray176},
xmin=-1.05, xmax=1.05,
xtick style={color=black},
y grid style={darkgray176},
ymin=-1.05, ymax=1.05,
ytick style={color=black}
]
\addplot [thick, black]
table {%
-0.5 0.5
-0.5 0.25
-0.75 0.25
-0.75 -0.25
-0.5 -0.25
-0.5 -0.5
-0.25 -0.5
-0.25 -0.75
0.25 -0.75
0.25 -0.5
0.5 -0.5
0.5 -0.25
0.75 -0.25
0.75 0.25
0.5 0.25
0.5 0.5
0.25 0.5
0.25 0.75
-0.25 0.75
-0.25 0.5
-0.5 0.5
};
\addplot [thick, black]
table {%
-0.25 0.5
0.25 0.5
};
\addplot [thick, black]
table {%
-0.5 0.25
0.5 0.25
};
\addplot [thick, black]
table {%
-0.75 0
0.75 0
};
\addplot [thick, black]
table {%
-0.5 -0.25
0.5 -0.25
};
\addplot [thick, black]
table {%
-0.25 -0.5
0.25 -0.5
};
\addplot [thick, black]
table {%
-0.5 -0.25
-0.5 0.25
};
\addplot [thick, black]
table {%
-0.25 -0.5
-0.25 0.5
};
\addplot [thick, black]
table {%
0 -0.75
0 0.75
};
\addplot [thick, black]
table {%
0.25 -0.5
0.25 0.5
};
\addplot [thick, black]
table {%
0.5 -0.25
0.5 0.25
};
\addplot [line width=0.24pt, black, dashed]
table {%
-0.5 0.625
0.5 0.625
};
\addplot [line width=0.24pt, black, dashed]
table {%
-0.625 0.5
0.625 0.5
};
\addplot [line width=0.24pt, black, dashed]
table {%
-0.625 0.375
0.625 0.375
};
\addplot [line width=0.24pt, black, dashed]
table {%
-0.75 0.125
0.75 0.125
};
\addplot [line width=0.24pt, black, dashed]
table {%
-0.75 -0.125
0.75 -0.125
};
\addplot [line width=0.24pt, black, dashed]
table {%
-0.625 -0.375
0.625 -0.375
};
\addplot [line width=0.24pt, black, dashed]
table {%
-0.625 -0.5
0.625 -0.5
};
\addplot [line width=0.24pt, black, dashed]
table {%
-0.5 -0.625
0.5 -0.625
};
\addplot [line width=0.24pt, black, dashed]
table {%
-0.625 -0.5
-0.625 0.5
};
\addplot [line width=0.24pt, black, dashed]
table {%
-0.5 -0.625
-0.5 0.625
};
\addplot [line width=0.24pt, black, dashed]
table {%
-0.375 -0.625
-0.375 0.625
};
\addplot [line width=0.24pt, black, dashed]
table {%
-0.125 -0.75
-0.125 0.75
};
\addplot [line width=0.24pt, black, dashed]
table {%
0.125 -0.75
0.125 0.75
};
\addplot [line width=0.24pt, black, dashed]
table {%
0.375 -0.625
0.375 0.625
};
\addplot [line width=0.24pt, black, dashed]
table {%
0.5 -0.625
0.5 0.625
};
\addplot [line width=0.24pt, black, dashed]
table {%
0.625 -0.5
0.625 0.5
};
\addplot [semithick, black, mark=*, mark size=1.5, mark options={solid}, only marks]
table {%
-0.25 0.5
0 0.5
-0.5 0.25
-0.25 0.25
0 0.25
0.25 0.25
-0.75 0
-0.5 0
-0.25 0
0 0
0.25 0
0.5 0
-0.75 -0.25
-0.5 -0.25
-0.25 -0.25
0 -0.25
0.25 -0.25
0.5 -0.25
-0.5 -0.5
-0.25 -0.5
0 -0.5
0.25 -0.5
-0.25 -0.75
};
\addplot [semithick, black, mark=x, mark size=2.5, mark options={solid}, only marks]
table {%
0.125 0.625
0 0.625
-0.125 0.625
-0.25 0.625
0.375 0.5
0.25 0.5
0.125 0.5
0 0.5
-0.125 0.5
-0.25 0.5
-0.375 0.5
-0.5 0.5
0.5 0.375
0.375 0.375
0.25 0.375
0.125 0.375
0 0.375
-0.125 0.375
-0.25 0.375
-0.375 0.375
-0.5 0.375
-0.625 0.375
0.5 0.25
0.375 0.25
0.25 0.25
0.125 0.25
0 0.25
-0.125 0.25
-0.25 0.25
-0.375 0.25
-0.5 0.25
-0.625 0.25
0.625 0.125
0.5 0.125
0.375 0.125
0.25 0.125
0.125 0.125
0 0.125
-0.125 0.125
-0.25 0.125
-0.375 0.125
-0.5 0.125
-0.625 0.125
-0.75 0.125
0.625 0
0.5 0
0.375 0
0.25 0
0.125 0
0 0
-0.125 0
-0.25 0
-0.375 0
-0.5 0
-0.625 0
-0.75 0
0.625 -0.125
0.5 -0.125
0.375 -0.125
0.25 -0.125
0.125 -0.125
0 -0.125
-0.125 -0.125
-0.25 -0.125
-0.375 -0.125
-0.5 -0.125
-0.625 -0.125
-0.75 -0.125
0.625 -0.25
0.5 -0.25
0.375 -0.25
0.25 -0.25
0.125 -0.25
0 -0.25
-0.125 -0.25
-0.25 -0.25
-0.375 -0.25
-0.5 -0.25
-0.625 -0.25
-0.75 -0.25
0.5 -0.375
0.375 -0.375
0.25 -0.375
0.125 -0.375
0 -0.375
-0.125 -0.375
-0.25 -0.375
-0.375 -0.375
-0.5 -0.375
-0.625 -0.375
0.5 -0.5
0.375 -0.5
0.25 -0.5
0.125 -0.5
0 -0.5
-0.125 -0.5
-0.25 -0.5
-0.375 -0.5
-0.5 -0.5
-0.625 -0.5
0.375 -0.625
0.25 -0.625
0.125 -0.625
0 -0.625
-0.125 -0.625
-0.25 -0.625
-0.375 -0.625
-0.5 -0.625
0.125 -0.75
0 -0.75
-0.125 -0.75
};
\addplot [semithick, black]
table {%
0.83 0
0.828328941471664 0.0526418533149485
0.82332249464956 0.105071736466212
0.815000818728047 0.15707853281914
0.803397422158976 0.208452829360296
0.788559027724985 0.258987759929744
0.77054538440334 0.308479838198072
0.749429026777896 0.356729777034012
0.725294982967921 0.403543290963389
0.69824043224988 0.448731878488146
0.668374313750779 0.492113581115352
0.635816887788752 0.533513716039828
0.600699251627208 0.572765579530153
0.56316281159242 0.609711118185753
0.523358713680154 0.644201565362158
0.481447234944094 0.676098040201779
0.437597138116717 0.705272106858097
0.391984992061327 0.731606291661493
0.344794460791566 0.75499455614425
0.296215563921254 0.775342724020038
0.246443911522468 0.792568860398581
0.195679916472825 0.80660360170854
0.144127987463552 0.817390435000133
0.0919957059178392 0.824885925502841
0.039492990133706 0.829059891521897
-0.0131687499828907 0.829895525969316
-0.0657774641911345 0.827389464040112
-0.118121315766827 0.821551796761174
-0.169989534494108 0.812406030358266
-0.221173265362729 0.799988991604752
-0.27146640955346 0.784350679533175
-0.320666454325297 0.765554064106802
-0.368575288462793 0.743674832661809
-0.415 0.718801085141084
-0.459753653008871 0.69103297934686
-0.502656040324263 0.660482327640591
-0.543534409174587 0.627272146714034
-0.582224156796247 0.591536162244456
-0.618569493230876 0.553418270428522
-0.652424068636514 0.513071958563102
-0.683651562586761 0.4706596870061
-0.712126232985031 0.426352235005927
-0.737733422383586 0.380328013033751
-0.760370019668618 0.332772344387489
-0.779944875252304 0.283876718960305
-0.796379168100033 0.233838022178387
-0.809606723114888 0.182857742212829
-0.819574277601407 0.13114115865788
-0.82624169573566 0.0788965159424714
-0.829582130178044 0.0263341848033962
-0.829582130178044 -0.0263341848033963
-0.82624169573566 -0.0788965159424716
-0.819574277601407 -0.131141158657881
-0.809606723114888 -0.182857742212829
-0.796379168100033 -0.233838022178387
-0.779944875252304 -0.283876718960305
-0.760370019668618 -0.332772344387489
-0.737733422383586 -0.380328013033751
-0.712126232985031 -0.426352235005927
-0.683651562586761 -0.4706596870061
-0.652424068636513 -0.513071958563102
-0.618569493230876 -0.553418270428522
-0.582224156796247 -0.591536162244456
-0.543534409174587 -0.627272146714034
-0.502656040324263 -0.660482327640591
-0.459753653008872 -0.69103297934686
-0.415 -0.718801085141084
-0.368575288462792 -0.743674832661809
-0.320666454325297 -0.765554064106802
-0.27146640955346 -0.784350679533175
-0.221173265362729 -0.799988991604752
-0.169989534494108 -0.812406030358266
-0.118121315766827 -0.821551796761174
-0.0657774641911347 -0.827389464040112
-0.0131687499828903 -0.829895525969316
0.0394929901337062 -0.829059891521897
0.091995705917839 -0.824885925502841
0.144127987463552 -0.817390435000133
0.195679916472825 -0.80660360170854
0.246443911522468 -0.792568860398581
0.296215563921253 -0.775342724020039
0.344794460791566 -0.75499455614425
0.391984992061327 -0.731606291661493
0.437597138116717 -0.705272106858097
0.481447234944094 -0.676098040201779
0.523358713680154 -0.644201565362158
0.56316281159242 -0.609711118185753
0.600699251627208 -0.572765579530153
0.635816887788752 -0.533513716039828
0.668374313750779 -0.492113581115351
0.69824043224988 -0.448731878488146
0.725294982967921 -0.403543290963389
0.749429026777895 -0.356729777034013
0.77054538440334 -0.308479838198072
0.788559027724985 -0.258987759929744
0.803397422158976 -0.208452829360296
0.815000818728047 -0.15707853281914
0.82332249464956 -0.105071736466212
0.828328941471664 -0.0526418533149485
0.83 -2.03291368658461e-16
};
\end{axis}

\end{tikzpicture}
\caption{Visualization of the approximation of $\Omega$ from the interior with help of $Z_n + [0, 2^{-n}]^d$.
In this example, $\Omega$ is given by a circle of radius $0.83$, and $Z_n$ (solid dots) and $Z_{n+1}$ (crosses) are visualized for $n=1$ (left) and $n=2$ (right).}
\label{fig:vis_discretization_region}
\end{figure}

As a first technical statement, we characterize the fill- and seperation distance of $Z_n + b$ for any $b \in [0, 2^{-n}]^d$.
Essentially we show that the sequence of sets $(Z_n)_{n \in \N}$ is quasi-uniform, 
i.e.\ it satisfies \Cref{ass:points}.
The statement itself is probably known in similar forms in the literature, however we could not locate it. 
Thus we provide the easy geometrical proof, 
including a visualization of the proof idea in \Cref{fig:vis_cone_construction}.

\begin{figure}[t]
\setlength\fwidth{.4\textwidth}
\centering
\input{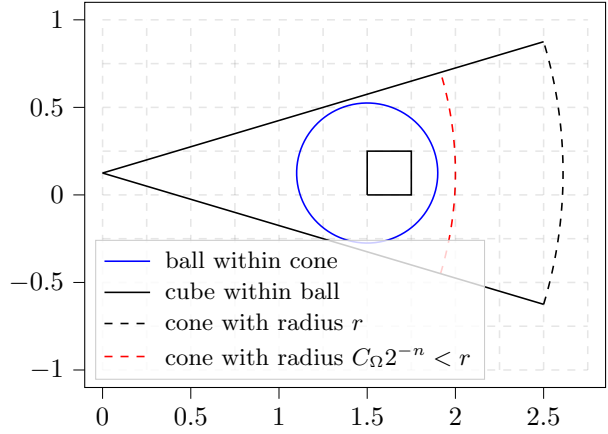}
\caption{Visualization of the proof of \Cref{prop:estimate_sep_fill_dist_Z}: 
For any $x \in \Omega$, the cone condition provides a ball and thus a cube within the region,
which is not too far from the point $x$.}
\label{fig:vis_cone_construction}
\end{figure}

\begin{prop}
\label{prop:estimate_sep_fill_dist_Z}
Let $\Omega \subset \R^d$ be a Lipschitz region.
Then there exists $n_0 \in \N$ and constants $c_\Omega, C_\Omega > 0$ such that for all $n \geq n_0$ the point sets $Z_{n}+b$ for any $b \in [0, 2^{-n}]^d$ satisfy the following uniformity bounds:
\begin{align*}
c_\Omega 2^{-n} = c_\Omega q_{Z_n + b} \leq h_{Z_n + b} \leq C_\Omega q_{Z_n + b} = C_\Omega 2^{-n}.
\end{align*}
\end{prop}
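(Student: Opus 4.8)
The plan is to dispatch the separation distance and the fill-distance lower bound first (both essentially for free), and then to establish the fill-distance upper bound via the interior cone condition, which is the only part requiring an idea; it is precisely the argument sketched in \Cref{fig:vis_cone_construction}.

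\emph{Separation distance and fill-distance lower bound.} First I would observe that $Z_n+b$ is a translate of the subset $Z_n\subset 2^{-n}\Z^d$, and since translations preserve pairwise distances, $q_{Z_n+b}=q_{Z_n}$. Two distinct points of $2^{-n}\Z^d$ are at Euclidean distance $\ge 2^{-n}$, with equality attained as soon as $Z_n$ contains two grid-neighbouring points. A Lipschitz region has nonempty interior and hence contains a Euclidean ball, so fixing $n_0$ with $2^{-n_0}$ small enough relative to that ball's radius guarantees, for all $n\ge n_0$, that a block of two adjacent cells $z+[0,2^{-n}]^d$ and $z+2^{-n}e_1+[0,2^{-n}]^d$ lies in $\Omega$; thus the minimal pairwise distance in $Z_n+b$ equals $2^{-n}$, which gives the claimed value of $q_{Z_n+b}$. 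For the fill-distance lower bound I would simply invoke that a Lipschitz region satisfies an interior cone condition (\cite[Lemma~1.5]{krieg2024random}), so that Eq.~\eqref{eq:bound_fill_sep_dist} applies with the admissible point set $X=Z_n+b\subset\Omega$ and yields $h_{Z_n+b}\ge c_\Omega q_{Z_n+b}=c_\Omega 2^{-n}$.

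\emph{Fill-distance upper bound.} This is the substantial step. Fix $x\in\Omega$ and let $C(x,\xi(x),\alpha,r)\subset\Omega$ be the cone supplied by the interior cone condition. Set $\ell:=\tfrac{2\sqrt d}{\sin\alpha}2^{-n}$ and, enlarging $n_0$ if necessary so that $\ell\le r$ for $n\ge n_0$, note that the scaled cone $C(x,\xi(x),\alpha,\ell)$ is still contained in $\Omega$. I would then check the elementary geometric fact that this scaled cone contains the Euclidean ball $B\!\left(x+\tfrac{\ell}{2}\xi(x),\tfrac{\sin\alpha}{2}\ell\right)$: the tangent lines from the apex $x$ to that ball make angle exactly $\alpha$ with the axis $\xi(x)$ (since $\arcsin\big(\tfrac{\sin\alpha\,\ell/2}{\ell/2}\big)=\alpha$), and every point of the ball lies within distance $\tfrac{\ell}{2}(1+\sin\alpha)\le\ell$ of $x$. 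This ball has radius $\tfrac{\sin\alpha}{2}\ell=\sqrt d\,2^{-n}$, and a second elementary estimate shows that any Euclidean ball of radius $\ge\sqrt d\,2^{-n}$ contains a full grid cell $z+[0,2^{-n}]^d$ with $z\in 2^{-n}\Z^d$: the cell centres form a translate of $2^{-n}\Z^d$, whose covering radius is $\tfrac{\sqrt d}{2}2^{-n}$, and each cell has circumradius $\tfrac{\sqrt d}{2}2^{-n}$ about its centre, so a ball of radius $2\cdot\tfrac{\sqrt d}{2}2^{-n}$ around any point swallows some cell entirely. Such a cell then lies in the ball, hence in the scaled cone, hence in $\Omega$, so $z\in Z_n$ by \eqref{eq:def_grid_like}; and since the whole scaled cone lies within distance $\ell$ of $x$, one gets $\|x-z\|_2\le\ell$, whence $\|x-(z+b)\|_2\le\ell+\|b\|_2\le\big(\tfrac{2\sqrt d}{\sin\alpha}+\sqrt d\big)2^{-n}=:C_\Omega 2^{-n}$. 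Taking the supremum over $x\in\Omega$ — the bound being uniform in $b\in[0,2^{-n}]^d$ — gives $h_{Z_n+b}\le C_\Omega 2^{-n}$.

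\emph{Main obstacle.} Nothing here is deep; the only real work lies in carrying out the two packing estimates above (a ball of controlled radius inscribed in a cone of half-angle $\alpha$, and a grid cell inscribed in a ball) and in the bookkeeping that makes $c_\Omega$, $C_\Omega$ and $n_0$ depend on $\Omega$ only through $d$ and the cone parameters $\alpha,r$, uniformly over $n\ge n_0$ and $b\in[0,2^{-n}]^d$. As a by-product, $(Z_n)_{n\in\N}$ is quasi-uniform with ratio $\rho_{Z_n}\le C_\Omega$ and satisfies \Cref{ass:points} with $a=\tfrac12$, which is how it enters the proof of \Cref{th:main_result}.
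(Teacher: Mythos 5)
Your proof is correct and follows essentially the same route as the paper: the separation distance comes from the grid structure, the lower bound on $h_{Z_n+b}$ from the volume-comparison estimate Eq.~\eqref{eq:bound_fill_sep_dist}, and the fill-distance upper bound from the interior cone condition by inscribing a ball of radius $\sqrt{d}\,2^{-n}$ in a truncated cone and then a full grid cell $z+[0,2^{-n}]^d$ inside that ball, so that $z \in Z_n$ and $x$ is close to $z+b$. The only differences are cosmetic: you verify the ball-in-cone fact by a direct tangent-line computation (and you additionally pin down that adjacent occupied cells exist so that $q_{Z_n+b}$ equals exactly $2^{-n}$), whereas the paper cites \cite[Lemma 1.1]{krieg2024random} for the inscribed ball, which only changes the explicit value of $C_\Omega$.
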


\begin{proof}
Since $\Omega$ is a Lipschitz region, it satisfies an interior cone condition \cite[Lemma 1.5]{krieg2024random}.
Let $r$ and $\alpha$ be the radius and angle of this cone condition.
We define $C_\Omega := 2\sqrt{d} \cdot \frac{1+\sin(\alpha)}{\sin(\alpha)} > 1$ and $n_0 \in \N$ as the smallest integer such that $C_\Omega 2^{-n_0} < r$.

We first show an upper bound on the fill distance,
which is defined as $h_{Z_n} \equiv \sup_{x \in \Omega} \min_{z \in Z_n}  \Vert x - z \Vert_2$.
Thus let $x \in \Omega$.
Due to the interior cone condition,
we have a cone $C(x, \xi(x), \alpha, r)$ within $\Omega$, i.e.
\begin{align*}
C(x, \xi(x), \alpha, r) \subset \Omega.
\end{align*}
For $n \geq n_0$ we consider the smaller cone $C(x, \xi(x), \alpha, C_\Omega 2^{-n})$.
By definition of $n_0$ we have for $n \geq n_0$ that $C_\Omega 2^{-n} \leq C_\Omega 2^{-n_0} < r$, i.e.\
\begin{align*}
C(x, \xi(x), \alpha, C_\Omega 2^{-n}) \subset C(x, \xi(x), \alpha, r).
\end{align*}
By \cite[Lemma 1.1]{krieg2024random}, there exists a ball within this cone with radius $\frac{\sin(\alpha)}{1+\sin(\alpha)} C_\Omega 2^{-n} = 2\sqrt{d} 2^{-n}$ centered at some $\tilde{x} \in C(x, \xi(x), \alpha, C_\Omega 2^{-n})$,
i.e.\
\begin{align*}
B_{2\sqrt{d}2^{-n}}(\tilde{x}) \subset C(x, \xi(x), \alpha, C_\Omega 2^{-n}).
\end{align*}
We now show that within this ball $B_{2\sqrt{d}2^{-n}}(\tilde{x})$, there exists a cube $z + [0, 2^{-n}]^d$ for some $z \in \Z_n$:
The distance of $\tilde{x}$ to $2^{-n}\Z^d$ is at most $\frac{1}{2}\sqrt{d} \cdot 2^{-n}$,
while the diameter of a cube of side length $2^{-n}$ is $\sqrt{d} 2^{-n}$.
Since the ball has a radius of $2\sqrt{d}2^{-n} > \frac{1}{2} \sqrt{d} 2^{-n} + \sqrt{d} 2^{-n}$,
there exists a $z \in 2^{-n}\Z^d$ such that $z + [0, 2^{-n}]^d$ is included in that ball, 
i.e.\
\begin{align*}
z + [0, 2^{-n}]^d \subset B_{2\sqrt{d}2^{-n}}(\tilde{x}) \subset \Omega \qquad \text{for } z \in 2^{-n}\Z^d.
\end{align*}
Since this cube is fully contained within $\Omega$, it follows $z \in Z_n$.
Now we may finally estimate the distance of $x$ to $Z_n + b$ for any $b \in [0, 2^{-n}]^d$:
Due to $x \in C(x, \xi(x), \alpha, C_\Omega 2^{-n})$ and $z + [0, 2^{-n}]^d \subset C(x, \xi(x), \alpha, C_\Omega 2^{-n})$ we have $\mathrm{dist}(x, Z_n + b) \leq C_\Omega 2^{-n}$
and thus $h_{Z_n + b} \leq C_\Omega 2^{-n}$.

Now we show an estimate on the seperation distance:
Since $Z_n \subset 2^{-n} \Z^d$ for any $n \in \N$, we have immediately $q_{Z_n} = 2^{-n}$ and thus $q_{Z_n + b} = 2^{-n}$ for any $n \in \N$.

Combining both estimates, we obtain the uniformity bound $\frac{h_{Z_n + b}}{q_{Z_n + b}} \leq C_\Omega \Leftrightarrow h_{Z_n + b} \leq C_\Omega q_{Z_n + b}$
for any $b \in [0, 2^{-n}]^d$.

The remaining bound $c_\Omega q_{Z_n+b} \leq h_{Z_n+b}$ is not specific to the set $Z_n+b$,
in fact it holds $c_\Omega q_X \leq h_X$ for any set of points $X \subset \Omega$ by a volume comparison arguments, see Eq.~\eqref{eq:bound_fill_sep_dist}.
\end{proof}

Next, given a continuous function $f \in \mathcal{C}(\Omega)$ we define a piecewise continuous density function $D_{Z_n}(f): \Omega \rightarrow \R$ based on the coefficients $(\alpha_j)_j$ of the kernel-based approximant:
Since $Z_n$ is a subset of $2^{-n} \Z^d$,
there is a unique decomposition of $x \in Z_n + [0, 2^{-n})^d \subset \Omega$ as
\begin{align}
\label{eq:unique_representation_x}
x = z(x) + b(x), \qquad z(x) \in Z_n, ~ b(x) \in [0, 2^{-n})^d.
\end{align}

\begin{definition}
Consider $f \in \mathcal{C}(\Omega)$ 
and a given kernel-based approximation scheme in Eq.~\eqref{eq:def_coefficients_D_new}, e.g.\ kernel-based interpolation.
\begin{itemize}
\item For $x \in \Omega \setminus (Z_n + [0, 2^{-n}))^d$,
we set $D_{Z_n}(f)(x) := 0$.
\item For $x \in Z_n + [0, 2^{-n})^d \subset \Omega$, 
we consider the unique decomposition of Eq.~\eqref{eq:unique_representation_x} as $x = z(x) + b(x)$ and consider the kernel approximant $s_{f, Z_n+b(x)}$ to $f$ based on the centers $Z_n + b(x)$,
\begin{align}
\label{eq:def_coefficients_D_new}
s_{f, Z_n + b(x)} &=: \sum_{x_i \in Z_n  + b} \alpha_{f, Z_n + b(x); x_i} k(\cdot, x_i).
\end{align}
Then we define $D_{Z_n}(f)$ via
\begin{align}
\label{eq:def_operator_D_new}
D_{Z_n}(f)(x) := 2^{nd} \cdot \alpha_{f, Z_n+b(x); x}
\end{align}
\end{itemize}
\end{definition}
The definition is well-defined, because for specified $n \in \N$, 
there is a unique $b(x) \in [0, 2^{-n})^d$ and a unique $z \in 2^{-n}\Z^d$ such that
$x = b(x) + z$, 
see Eq.~\eqref{eq:unique_representation_x}.
Note that this definition does not specify, how the kernel approximants $s_{f, X}$ actually looks like:
it does not rely on being a kernel-based interpolant,
and thus also works for e.g.\ regularized kernel-based approximation or kernel Galerkin methods.
A visualization of the functions $D_{Z_n}(f)$ obtained by kernel interpolation for $n \in \{ 3, 4, 5 \}$ is provided in \Cref{fig:vis_density_func_approx}.

\begin{figure}[t]
\centering
\input{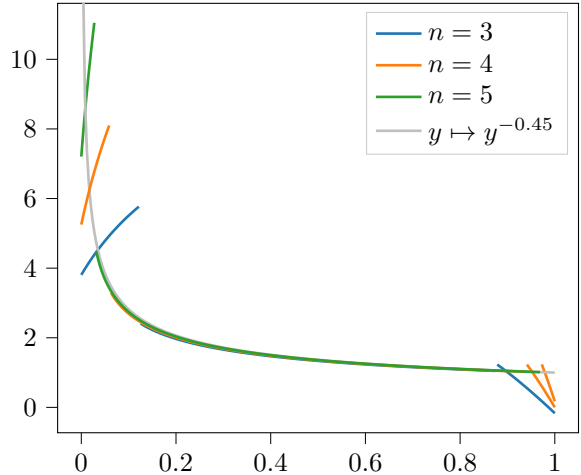}
\caption{Visualization of the density functions $D_{Z_n}(f)$ obtained by kernel interpolation for $n \in \{3, 4, 5\}$ for $\Omega = [0, 1]$ and $f$ given by $f = Tv$ with $v(z) = z^{-0.45} \in L_2(\Omega)$.
The piecewise continuous nature of these functions is clearly observeable.
The Wendland kernel $k(x, z) = \max(1-|x-z|, 0)$ was used for this example.}
\label{fig:vis_density_func_approx}
\end{figure}

\subsubsection{Properties of density functions}

Next, we will derive several necessary properties of the constructed density functions $D_{Z_n}(f)$ and of $TD_{Z_n}(f)$.
First we need to show that the constructed density function $D_{Z_n}(f)$ is piecewiese continuous.
This is the case, if the coefficients $\alpha_{f, Z_n + b(x); x_i}$ of Eq.~\eqref{eq:def_coefficients_D_new} depend in a continuous way on $b(x)$,
i.e.\ under \Cref{ass:method}.

\begin{prop}
\label{prop:piecewise_continuous}
Let $k: \Omega \times \Omega \rightarrow \R$ be a continuous strictly positive definite kernel defined on a compact Lipschitz region $\Omega \subset \R^d$.
Consider a kernel-based approximation method satisfying \Cref{ass:method}.

Given $f \in \mathcal{C}(\Omega)$, 
it follows that the density function $D_{Z_n}(f)$ is continuous on $z + (0, 2^{-n})^d$ for any $z \in 2^{-n} \Z^d$.
Furthermore, $D_{Z_n}(f)|_{z + (0, 2^{-n})^d}$ can be extended continuously to $z + [0, 2^{-n}]^d$.
\end{prop}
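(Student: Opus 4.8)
The plan is to reduce the statement to the coefficient continuity granted by \Cref{ass:method}, using the key observation that on the interior of a single grid cube the decomposition \eqref{eq:unique_representation_x} keeps the cell index $z(x)$ fixed, so that only the common translation $b(x) = x - z(x)$ varies. I would first dispose of the trivial case $z \in 2^{-n}\Z^d \setminus Z_n$: since the half-open cubes $\{z' + [0,2^{-n})^d : z' \in 2^{-n}\Z^d\}$ partition $\R^d$, the set $z + [0,2^{-n})^d$ is disjoint from $Z_n + [0,2^{-n})^d$, so $D_{Z_n}(f) \equiv 0$ on $z + (0,2^{-n})^d$, which is continuous there and extends continuously by the zero function.

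So assume $z \in Z_n$. Let $N := |Z_n| < \infty$ and fix an enumeration $Z_n = \{w_1,\dots,w_N\}$. For $b \in \R^d$ the translated tuple $(w_1+b,\dots,w_N+b)$ has pairwise distinct entries (translation is an isometry and $q_{Z_n} = 2^{-n} > 0$), and for every $b \in [0,2^{-n}]^d$ it lies in $\Omega$ because $w_i + [0,2^{-n}]^d \subset \Omega$ by the construction \eqref{eq:def_grid_like} of $Z_n$. Hence $b \mapsto (w_1+b,\dots,w_N+b)$ is a continuous (indeed affine) path, defined on the closed cube $[0,2^{-n}]^d$, through admissible $N$-point center configurations in $\Omega$. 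Composing this path with the coefficient map, which is continuous in the configuration by \Cref{ass:method}, shows that
\[
\beta : [0,2^{-n}]^d \to \R, \qquad \beta(b) := \alpha_{f,\,Z_n+b;\,z+b},
\]
is continuous on the closed cube.

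It then remains to translate back. For $x \in z + [0,2^{-n})^d$ the unique decomposition \eqref{eq:unique_representation_x} is $x = z + (x-z)$ with $z(x) = z$ and $b(x) = x - z \in [0,2^{-n})^d$, so \eqref{eq:def_operator_D_new} reads $D_{Z_n}(f)(x) = 2^{nd}\,\beta(x-z)$. Since $x \mapsto x-z$ is continuous, $D_{Z_n}(f) = 2^{nd}\,\beta(\cdot - z)$ is continuous on $z + (0,2^{-n})^d$; moreover $x \mapsto 2^{nd}\,\beta(x-z)$ is defined and continuous on the whole closed cube $z + [0,2^{-n}]^d$ and coincides with $D_{Z_n}(f)$ on $z + [0,2^{-n})^d \supseteq z + (0,2^{-n})^d$, so it is the desired continuous extension.

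The only genuinely delicate point is the verification, used for the composition in the second step, that $Z_n+b$ remains \emph{for every} $b$ in the closed cube $[0,2^{-n}]^d$ a set of pairwise distinct points contained in $\Omega$, so that \Cref{ass:method} is actually applicable along the entire path $b \mapsto Z_n+b$; this is precisely where the lattice structure $Z_n \subset 2^{-n}\Z^d$ and the interior-filling property $Z_n + [0,2^{-n}]^d \subset \Omega$ enter. Everything else is routine bookkeeping with the cube partition.
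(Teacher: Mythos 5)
Your proposal is correct and follows essentially the same route as the paper's proof: split into the trivial case $z \notin Z_n$ and the case $z \in Z_n$, where the continuity of $D_{Z_n}(f)(x) = 2^{nd}\alpha_{f, Z_n+b(x); z+b(x)}$ in $b(x)$ follows from the coefficient continuity of \Cref{ass:method}, and the extension to the closed cube uses that $z + [0,2^{-n}]^d \subset \Omega$ for $z \in Z_n$. Your write-up merely makes explicit (pairwise distinctness under translation, the affine path $b \mapsto Z_n + b$ staying in $\Omega$) what the paper's proof leaves implicit.
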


\begin{proof}
For $z \in 2^{-n} \Z^d \setminus Z_n$,
the density function $D_{Z_n}(f)$ is defined to be equal zero and thus continuous.
For $z \in Z_n \subset \Omega$, the function values are obtained by Eq.~\eqref{eq:def_operator_D_new}.
Since the kernel-based approximation method is assumed to satisfy \Cref{ass:method},
the coefficients $(\alpha_{f, Z_n+b(x); x_i})_{i=1, ..., |Z_n+b(x)|}$ depend continuously on $Z_n+b(x)$ and thus on $b(x)$,
where $b(x) \in (0, 2^{-n})^d$.
Recall that it holds $z \in Z_n$ if and only if $z + [0, 2^{-n}]^d \subset \Omega$.
Thus the coefficient vector $(\alpha_{f, Z_n+b(x); x_i})_{i=1, ..., |Z_n+b(x)|}$ (as a function in $b(x)$) can be extended continuously to $z + [0, 2^{-n}]^d$.
\end{proof}

From the previous proposition, we can immediately conclude that $D_{Z_n}(f)$ is bounded and thus $D_{Z_n}(f) \in L_2(\Omega)$.
Therefore $TD_{Z_n}(f) \in TL_2(\Omega) \subset \mathcal{C}(\Omega)$.

Next, we would like to show that $TD_{Z_n}(f)$ converges to $f$ in $L_2(\Omega)$ for $n \rightarrow \infty$.
This can even be quantified:
\begin{prop}
\label{prop:approx_by_TD}
Given $f \in \mathcal{C}(\Omega)$,
assume it holds 
\begin{align*}
\Vert f - s_{f, X} \Vert_{L_2(\Omega)} \leq Ch_X^\beta
\end{align*}
for some $\beta > 0$ and all quasi-uniform $X \subset \Omega$.
Then it holds
\begin{align*}
\Vert f - TD_{Z_n}(f) \Vert_{L_2(\Omega)} \leq C 2^{-n\beta}.
\end{align*}
\end{prop}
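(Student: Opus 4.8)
\emph{Overview.} The plan is to identify $TD_{Z_n}(f)$ with an average of the shifted kernel approximants $s_{f,Z_n+b}$ over all grid shifts $b\in[0,2^{-n})^d$, and then to transfer the approximation hypothesis pointwise in $b$ by Minkowski's integral inequality, using the uniformity estimate for $Z_n+b$ already established in \Cref{prop:estimate_sep_fill_dist_Z}.

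\emph{Step 1: an integral representation of $TD_{Z_n}(f)$.} Since $D_{Z_n}(f)$ vanishes outside $Z_n+[0,2^{-n})^d$, we have for every $y\in\Omega$
\[
TD_{Z_n}(f)(y)=\sum_{z\in Z_n}\int_{z+[0,2^{-n})^d}k(y,x)\,D_{Z_n}(f)(x)\,\mathrm{d}x .
\]
In the $z$-th summand I would substitute $x=z+b$ with $b\in[0,2^{-n})^d$ and use the defining identity $D_{Z_n}(f)(z+b)=2^{nd}\,\alpha_{f,Z_n+b;\,z+b}$ of Eq.~\eqref{eq:def_operator_D_new}. Interchanging the finite sum over $z\in Z_n$ with the $b$-integral and recognizing $\sum_{z\in Z_n}\alpha_{f,Z_n+b;\,z+b}\,k(y,z+b)=s_{f,Z_n+b}(y)$ gives the key identity
\[
TD_{Z_n}(f)=2^{nd}\int_{[0,2^{-n})^d}s_{f,Z_n+b}\,\mathrm{d}b ,
\]
understood as an $L_2(\Omega)$-valued integral. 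This is legitimate because \Cref{ass:method} makes $b\mapsto s_{f,Z_n+b}$ continuous, hence bounded and measurable, on $[0,2^{-n}]^d$ — note that $Z_n+b\subset\Omega$ for every such $b$, so the approximant is always well defined.

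\emph{Step 2: Minkowski's inequality and the uniformity bound.} Since $f$ does not depend on $b$ and $2^{nd}\vol([0,2^{-n})^d)=1$, we may write $f=2^{nd}\int_{[0,2^{-n})^d}f\,\mathrm{d}b$, whence $f-TD_{Z_n}(f)=2^{nd}\int_{[0,2^{-n})^d}(f-s_{f,Z_n+b})\,\mathrm{d}b$. Minkowski's integral inequality then yields
\[
\Vert f-TD_{Z_n}(f)\Vert_{L_2(\Omega)}\le 2^{nd}\int_{[0,2^{-n})^d}\Vert f-s_{f,Z_n+b}\Vert_{L_2(\Omega)}\,\mathrm{d}b .
\]
By \Cref{prop:estimate_sep_fill_dist_Z}, for $n\ge n_0$ and every $b\in[0,2^{-n}]^d$ the set $Z_n+b$ is quasi-uniform with $h_{Z_n+b}\le C_\Omega 2^{-n}$ (and uniformity constant bounded by $C_\Omega$, so the hypothesis applies with one fixed constant $C$, uniformly in $b$). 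Hence $\Vert f-s_{f,Z_n+b}\Vert_{L_2(\Omega)}\le C\,h_{Z_n+b}^\beta\le C\,C_\Omega^\beta\,2^{-n\beta}$. Plugging this in and using $2^{nd}\vol([0,2^{-n})^d)=1$ gives $\Vert f-TD_{Z_n}(f)\Vert_{L_2(\Omega)}\le C\,C_\Omega^\beta\,2^{-n\beta}$, which is the claim after renaming the constant; the finitely many $n<n_0$ are absorbed into the constant.

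\emph{Main obstacle.} The only genuinely delicate point is the bookkeeping in Step 1: one must verify that the half-open cubes $z+[0,2^{-n})^d$ with $z\in Z_n$ carry exactly the support of $D_{Z_n}(f)$, and that the substitution $x=z+b$ matches the indexing in Eq.~\eqref{eq:def_operator_D_new} so that the shifted approximant $s_{f,Z_n+b}$ reappears cleanly after summing over $z$. Once the representation $TD_{Z_n}(f)=2^{nd}\int_{[0,2^{-n})^d}s_{f,Z_n+b}\,\mathrm{d}b$ is in place, the rest is a one-line consequence of Minkowski's inequality and \Cref{prop:estimate_sep_fill_dist_Z}.
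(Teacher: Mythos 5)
Your proposal is correct and follows essentially the same route as the paper: both identify $TD_{Z_n}(f)$ with the average of the shifted approximants $s_{f,Z_n+b}$ over $b\in[0,2^{-n}]^d$ and then transfer the assumed rate to this average via the uniformity bound $h_{Z_n+b}\leq C_\Omega 2^{-n}$ from \Cref{prop:estimate_sep_fill_dist_Z}. The only difference is cosmetic: the paper realizes the average as a limit of Riemann sums (justified through the piecewise continuity of \Cref{prop:piecewise_continuous}) and applies the triangle inequality to the finite sums, whereas you keep the $b$-integral as an $L_2(\Omega)$-valued integral and apply Minkowski's integral inequality.
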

Note that the rate of convergence $2^{-n\beta}$ matches the rate given by $h_X^\beta$,
but expressed in terms of the discretization level $n$ of $Z_n$.
Essentially, the assumed approximation rate of $s_{f, X}$ carries over to $TD_{Z_n}(f)$ without any loss in the rate.

\begin{proof}
In order to show that the convergence rate on $\Vert f - s_{f, X} \Vert_{L_2(\Omega)}$ basically carries over to $\Vert f - TD_{Z_n}(f) \Vert_{L_2(\Omega)}$,
we make use of Eq.~\eqref{eq:motivation_dirac} and the piecewise continuity of $TD_{Z_n}(f)$,
which allows to discretize the $L_2(\Omega)$ integral with help of Riemann sums.
We have:
\begin{align*}
\Vert f - TD_{Z_n}(f) \Vert_{L_2(\Omega)}^2 
&= \int_\Omega \left| f(x) - TD_{Z_n}(f)(x) \right|^2 ~ \mathrm{d}x
\end{align*}
We compute by using the definition of $D_{Z_n}(f)$
\begin{align*}
TD_{Z_n}(f) &= \int_\Omega k(\cdot, y) D_{Z_n}(f)(z) ~ \mathrm{d}z \\
&= \int_{Z_n + [0, 2^{-n}]^d} k(\cdot, z) D_{Z_n}(f)(z) ~ \mathrm{d}z \\
&= \sum_{z \in Z_n} \int_{z + [0, 2^{-n}]^d} k(\cdot, z) D_{Z_n}(f)(z) ~ \mathrm{d}z \\
&= \sum_{z \in Z_n} \lim_{N \rightarrow \infty} \frac{2^{-nd}}{N^d} \cdot \sum_{b \in \frac{2^{-n}}{N} \{ 0, ..., N-1 \}^d } k(\cdot, z+b) 2^{nd} \alpha_{f, Z_n + b; z+b}.
\end{align*}
In the last step, the integral was replaced by the limit of an Riemann approximation using grid points $b$ in $2^{-n} [0, 1]^d$. 
This is possible because of the piecewise continuity, 
guaranteed by \Cref{prop:piecewise_continuous}.
Some simplifications and rearrangements of the sums yield
\begin{align*}
&= \lim_{N \rightarrow \infty} \frac{1}{N^d} \cdot \sum_{b \in \frac{2^{-n}}{N} \{ 0, ..., N-1 \}^d } \sum_{z \in Z_n} k(\cdot, z+b) \alpha_{f, Z_n + b; z+b}  \\
&= \lim_{N \rightarrow \infty} \frac{1}{N^d} \cdot \sum_{b \in \frac{2^{-n}}{N} \{ 0, ..., N-1 \}^d } s_{f, Z_n+b}.
\end{align*}
Thus we can finally estimate:
\begin{align*}
\Vert f - TD_{Z_n}(f) \Vert_{L_2(\Omega)}
&= \left \Vert f - \lim_{N \rightarrow \infty} \frac{1}{N^d} \cdot \sum_{b \in \frac{2^{-n}}{N} \{ 0, ..., N-1 \}^d } s_{f, Z_n+b} \right \Vert_{L_2(\Omega)} \\
&\leq \lim_{N \rightarrow \infty} \frac{1}{N^d} \sum_{b \in \frac{2^{-n}}{N} \{ 0, ..., N-1 \}^d } \left \Vert f -  s_{f, Z_n+b} \right \Vert_{L_2(\Omega)} \\
&\leq \lim_{N \rightarrow \infty} \frac{1}{N^d} \sum_{b \in \frac{2^{-n}}{N} \{ 0, ..., N-1 \}^d } C h_{Z_n + b}^{\beta}.
\end{align*}
The fill distances $h_{Z_n+b}$ for any $b \in [0, 2^{-n}]^d$ can be bounded by \Cref{prop:estimate_sep_fill_dist_Z} as $C_\Omega 2^{-n}$,
such that the final result follows immediately.
\end{proof}

Next,
as a preparation for showing a Cauchy sequence property in certain spaces (see proof of \Cref{th:main_result}),
we are interested in precisely bounding $\Vert D_{Z_n} f - D_{Z_{n+1}} f \Vert_{L_2(\Omega)}$.
For this we require the following utility statements on the norm of the coefficient vector $(\alpha_j)_{j=1}^{|X|}$.
The statement is  basic combination of \cite[Proposition 6]{wenzel2025sharp} and \Cref{prop:escaping_bernstein}:
\begin{prop}
\label{prop:estimate_lp_coeff_norm}
Let $\alpha = (\alpha_j)_{j=1}^{|X|} \in \R^{|X|}$ be the vector of the coefficients of $s_X = \sum_{j=1}^{|X|} \alpha_j k(\cdot, x_j)$ and $X:= \{x_1, ..., x_{|X|} \}$.
Then it holds
\begin{align}
\label{eq:bound_coeffs}
\Vert \alpha \Vert_{\ell_2} \leq Cq_X^{d/2-2\tau} \Vert s_X \Vert_{L_2(\Omega)}.
\end{align}
\end{prop}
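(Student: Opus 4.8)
The plan is to use $\Vert s_X \Vert_{\ns}$ as an intermediate quantity: first I would bound $\Vert \alpha \Vert_{\ell_2}$ from above by a multiple of $q_X^{(d-2\tau)/2}\Vert s_X \Vert_{\ns}$ via the lower eigenvalue bound on the kernel matrix, and then convert the $\ns$-norm into the $L_2(\Omega)$-norm using the Bernstein inequality of \Cref{prop:escaping_bernstein}. This is precisely the combination referred to above, with \cite[Proposition 6]{wenzel2025sharp} supplying the first step.

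For the first step I would expand, using the reproducing property \eqref{eq:reproducing_property},
\[
\Vert s_X \Vert_{\ns}^2 = \sum_{i,j=1}^{|X|} \alpha_i \alpha_j k(x_i, x_j) = \alpha^\top A_X \alpha \geq \lambda_{\min}(A_X)\, \Vert \alpha \Vert_{\ell_2}^2 ,
\]
where $A_X$ is the kernel matrix of Eq.~\eqref{eq:kernel_matrix}, which is symmetric positive definite. Invoking the eigenvalue estimate \eqref{eq:estimate_lambda_min}, i.e.\ $\lambda_{\min}(A_X) \geq c_0 q_X^{2\tau-d}$, then gives $\Vert \alpha \Vert_{\ell_2} \leq c_0^{-1/2} q_X^{(d-2\tau)/2}\Vert s_X \Vert_{\ns}$. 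For the second step, since $s_X \in \Sp\{k(\cdot,x), x\in X\}$, I would apply \Cref{prop:escaping_bernstein} with $\vartheta = 1$ together with the norm equivalence $\ns \asymp H^\tau(\Omega)$ to obtain $\Vert s_X \Vert_{\ns} \leq C q_X^{-\tau}\Vert s_X \Vert_{L_2(\Omega)}$. Chaining the two inequalities and simplifying the exponent $(d-2\tau)/2 - \tau = d/2 - 2\tau$ yields \eqref{eq:bound_coeffs}.

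There is no genuine obstacle in this argument; it is a routine composition of two already-established estimates, and constants may of course be absorbed along the way. The only substantive ingredient is that \Cref{prop:escaping_bernstein} holds for general Sobolev kernels, not merely radial ones, which is exactly what was arranged earlier. Put differently, the content of this proposition amounts to replacing the native-space norm on the right-hand side of \cite[Proposition 6]{wenzel2025sharp} by the weaker $L_2(\Omega)$-norm, a replacement that the Bernstein inequality permits at the cost of an additional factor $q_X^{-\tau}$.
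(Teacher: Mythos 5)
Your proposal is correct and follows essentially the same route as the paper: the lower bound on $\lambda_{\min}(A_X)$ (equivalently $\Vert A_X^{-1}\Vert_{2,2}$) combined with $\alpha^\top A_X \alpha = \Vert s_X\Vert_{\ns}^2$, followed by the Bernstein inequality of \Cref{prop:escaping_bernstein} with $\vartheta=1$ to pass from the $\ns$-norm to the $L_2(\Omega)$-norm. The exponent bookkeeping $(d-2\tau)/2-\tau = d/2-2\tau$ matches the paper's bound.
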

\begin{proof}
We compute
\begin{align*}
\Vert \alpha \Vert_{\ell_2}^2 &= \alpha^\top A_{X}^{1/2} A_{X}^{-1} A_{X}^{1/2} \alpha \leq \Vert A_{X}^{-1} \Vert_{2,2} \cdot \alpha^\top A_{X} \alpha \notag \\
&\leq \Vert A_{X}^{-1} \Vert_{2,2} \cdot \Vert s_X \Vert_{\ns}^2 \\
&\leq Cq_X^{d-2\tau} q_X^{-2\tau} \Vert s_X \Vert_{L_2(\Omega)}^2,
\end{align*}
where we used both Eq.~\eqref{eq:estimate_lambda_min} and Eq.~\eqref{eq:escaping_bernstein} in the last step.
\end{proof}
Note that inequalities like Eq.~\eqref{eq:bound_coeffs} using $\ell_p$ and $L_p$ norms were discussed and derived in \cite[Theorem 5.3]{ward2012lp},
however using $L_p(\R^d)$ instead of $L_p(\Omega)$.
Thus \Cref{prop:estimate_lp_coeff_norm} provides a localized version of these for the case of $p=2$.

With this proposition at hand,
we can prove the following $L_2(\Omega)$ bound on $D_{Z_n}(f)$.
Note that for $\beta < 2\tau$, 
the upper bound grows in $n$.
\begin{prop}
\label{prop:L2_bound_density_approx_2}
Let $f \in \mathcal{C}(\Omega)$,
assume it holds 
\begin{align}
\label{eq:assumed_decay_rate}
\Vert f - s_{f, X} \Vert_{L_2(\Omega)} \leq C h_X^{\beta}
\end{align}
for some $\beta > 0$ and all quasi-uniform $X \subset \Omega$. 
Then it holds
\begin{align*}
\Vert D_{Z_n} f \Vert_{L_2(\Omega)} \leq C 2^{-n(\beta-2\tau)}.
\end{align*}
\end{prop}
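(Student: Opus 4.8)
The plan is to reduce $\Vert D_{Z_n}(f)\Vert_{L_2(\Omega)}$ to a bound, uniform in the offset $b$, on the $\ell_2$-norm of the coefficient vector of $s_{f,Z_n+b}$, and then to obtain the latter by telescoping over the discretization level. First I would decompose $\Omega$ into the level-$n$ cubes $z+[0,2^{-n})^d$, $z\in Z_n$, and substitute $x=z+b$ exactly as in the proof of \Cref{prop:approx_by_TD}, arriving at the identity
\begin{align*}
\Vert D_{Z_n}(f)\Vert_{L_2(\Omega)}^2 = 2^{2nd}\int_{[0,2^{-n})^d}\Vert\alpha_{f,Z_n+b}\Vert_{\ell_2}^2\,\mathrm{d}b,
\end{align*}
where $\alpha_{f,Z_n+b}=(\alpha_{f,Z_n+b;z+b})_{z\in Z_n}$ is the coefficient vector of $s_{f,Z_n+b}$. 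Since the integration domain has volume $2^{-nd}$, it suffices to bound $\sup_{b}\Vert\alpha_{f,Z_n+b}\Vert_{\ell_2}$.

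To control this supremum I would fix $b\in[0,2^{-n})^d$ and form the nested chain of offset sets $Z_{n_0}+b_{n_0}\subseteq Z_{n_0+1}+b_{n_0+1}\subseteq\cdots\subseteq Z_n+b$ with $b_m:=b\bmod 2^{-m}$; nestedness follows from $Z_{m-1}\subseteq Z_m$ together with $b_{m-1}=b_m+2^{-m}\epsilon$ for some $\epsilon\in\{0,1\}^d$, and by \Cref{prop:estimate_sep_fill_dist_Z} every set in this chain is quasi-uniform with uniformity constant $C_\Omega$ and separation distance $2^{-m}$. Hence the error hypothesis applies along the whole chain, so each telescoping increment obeys $\Vert s_{f,Z_m+b_m}-s_{f,Z_{m-1}+b_{m-1}}\Vert_{L_2(\Omega)}\le C2^{-m\beta}$; since the increment lies in $\Sp\{k(\cdot,x):x\in Z_m+b_m\}$, the Bernstein inequality \Cref{th:bernstein} upgrades this, for any fixed $\vartheta\in[0,2-\frac{d}{2\tau})$, to $\Vert s_{f,Z_m+b_m}-s_{f,Z_{m-1}+b_{m-1}}\Vert_{\calh_\vartheta(\Omega)}\le C2^{m(\vartheta\tau-\beta)}$. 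For $\vartheta\tau<\beta$ the geometric series $\sum_m 2^{m(\vartheta\tau-\beta)}$ converges, so telescoping yields $\Vert s_{f,Z_n+b}\Vert_{\calh_\vartheta(\Omega)}\le C$ uniformly in $n$ and $b$.

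Then I would pass from the $\calh_\vartheta$-norm to the coefficient norm: writing $A^{(2-\vartheta)}$ for the kernel matrix of $k^{(2-\vartheta)}$ on $Z_n+b$, the generalized reproducing property \Cref{prop:generalized_repr_prop} gives $\Vert s_{f,Z_n+b}\Vert_{\calh_\vartheta(\Omega)}^2=\alpha_{f,Z_n+b}^\top A^{(2-\vartheta)}\alpha_{f,Z_n+b}$, hence $\Vert\alpha_{f,Z_n+b}\Vert_{\ell_2}^2\le\Vert(A^{(2-\vartheta)})^{-1}\Vert_{2,2}\cdot\Vert s_{f,Z_n+b}\Vert_{\calh_\vartheta(\Omega)}^2\le C\,2^{n(2(2-\vartheta)\tau-d)}$, using the small-eigenvalue bound for $k^{(2-\vartheta)}$ (the analogue of Eq.~\eqref{eq:estimate_lambda_min} at smoothness $(2-\vartheta)\tau>d/2$; for $\vartheta=1$ this is precisely \Cref{prop:estimate_lp_coeff_norm}). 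Combining with the first display gives $\Vert D_{Z_n}(f)\Vert_{L_2(\Omega)}\le C2^{n(2-\vartheta)\tau}$, and letting $\vartheta\uparrow\min\{\beta/\tau,\,2-\frac{d}{2\tau}\}$ produces $\Vert D_{Z_n}(f)\Vert_{L_2(\Omega)}\le C2^{-n(\beta-2\tau)}$ when $\beta\le 2\tau-\frac{d}{2}$; for larger $\beta$ the remaining (stronger, eventually decaying) part of the claim I would recover by additionally invoking the convergence $TD_{Z_n}(f)\to f$ in $L_2(\Omega)$ from \Cref{prop:approx_by_TD} together with the closure characterization $\overline{TL_2(\Omega)}^{\,\Vert\cdot\Vert_{\calh_\vartheta(\Omega)}}=\calh_\vartheta(\Omega)$.

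The hard part will be the chain construction: one must verify that the offsets $b_m=b\bmod 2^{-m}$ really produce \emph{nested} point sets that remain quasi-uniform with the \emph{fixed} constant $C_\Omega$ up to the target level $n$, so that the assumed rate and the Bernstein inequality apply with constants independent of $m$, and one must track how the constant degrades as $\vartheta\to\beta/\tau$. A secondary obstacle is the barrier $\vartheta<2-\frac{d}{2\tau}$, which prevents pushing $\vartheta$ up to $\beta/\tau$ once $\beta$ approaches $2\tau$, so that the sharp exponent in that regime must come from the separate \Cref{prop:approx_by_TD} argument rather than from the coefficient-vector estimate.
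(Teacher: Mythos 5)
Your reduction to a uniform-in-$b$ bound on $\Vert \alpha_{f,Z_n+b}\Vert_{\ell_2}$ is exactly the right first step (it matches the paper's Riemann-sum computation), and the telescoping chain $Z_{m-1}+b_{m-1}\subseteq Z_m+b_m$ is constructed correctly. The gap is in how you convert this into the coefficient bound. First, your exponent is obtained only in the limit $\vartheta\uparrow\beta/\tau$, but the constant in your estimate contains the geometric series $\sum_m 2^{m(\vartheta\tau-\beta)}\sim (1-2^{\vartheta\tau-\beta})^{-1}$, which blows up as $\vartheta\tau\to\beta$; so for each admissible $\vartheta$ you only get $\Vert D_{Z_n}(f)\Vert_{L_2(\Omega)}\leq C_\vartheta\,2^{n(2-\vartheta)\tau}$ with $(2-\vartheta)\tau>2\tau-\beta$, i.e.\ the claimed sharp rate is missed by an $\varepsilon$ in the exponent. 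Second, and more seriously, the whole route through $\Vert s_{f,Z_n+b}\Vert_{\calh_\vartheta(\Omega)}$ and the power-kernel matrix is capped by the barrier $\vartheta<2-\frac{d}{2\tau}$ (\Cref{th:bernstein}, Eq.~\eqref{eq:kernel_inclusion}), so the best exponent you can ever produce this way is $(2-\vartheta)\tau>\frac{d}{2}$. Hence for $\beta\geq 2\tau-\frac{d}{2}$ — in particular for the saturation regime $\beta>2\tau$, which is precisely where \Cref{prop:L2_bound_density_approx_2} is needed in the proof of \Cref{th:main_result} — your argument cannot reach the claimed bound $C2^{-n(\beta-2\tau)}$. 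The fallback you propose for that regime does not close the gap: \Cref{prop:approx_by_TD} controls $\Vert f-TD_{Z_n}(f)\Vert_{L_2(\Omega)}$, and since $T$ is smoothing (its inverse is unbounded on $L_2(\Omega)$), convergence of $TD_{Z_n}(f)$ gives no control whatsoever on $\Vert D_{Z_n}(f)\Vert_{L_2(\Omega)}$; the closure characterization Eq.~\eqref{eq:power_space_via_closure} does not help here either.

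The paper avoids both problems with a different, more elementary device that stays entirely at the level of the original kernel $k$: for each $b$ it compares $s_{f,Z_n+b}$ with a \emph{disjointly shifted} approximant $s_{f,Z_n+b+b'}$, $b'\in\{-2^{-(n+1)},2^{-(n+1)}\}^d$. The triangle inequality and the assumed rate give $\Vert s_{f,Z_n+b}-s_{f,Z_n+b+b'}\Vert_{L_2(\Omega)}\leq C2^{-n\beta}$, and this difference is a trial function on the disjoint union $(Z_n+b)\cup(Z_n+b+b')$ with separation distance $\asymp 2^{-(n+1)}$, so \Cref{prop:estimate_lp_coeff_norm} applied to the \emph{difference} yields $\Vert\alpha_{f,Z_n+b}\Vert_{\ell_2}^2\leq C2^{-nd-2n(\beta-2\tau)}$ directly. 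This uses only the $\vartheta=1$ Bernstein inequality and the $\lambda_{\min}$ bound for $k$, so it is sharp (no $\varepsilon$ loss) and valid for every $\beta>0$ with no smoothness ceiling — this comparison trick is the idea missing from your proposal.
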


\begin{proof}
First we would like to estimate the norm of the coefficient vector $\alpha_{f, Z_n + b} \in \R^{|Z_n|}$ of
$s_{f, Z_n + b} \equiv \sum_{x_i \in Z_n+b} \alpha_{f, Z_n + b; x_i} k(\cdot, x_i)$ for any $b \in [0, 2^{-n}]^d$,
see Eq.~\eqref{eq:def_coefficients_D_new}:
For $b \in [0, 2^{-n}]^d$ we use $b' \in \{ -2^{-(n+1)}, 2^{-(n+1)} \}^{d} \in \R^d$ such that
$Z_n + b + b' \in Z_n + [0, 2^{-n}]^d \subset \Omega$.
Then we have
\begin{align*}
\Vert s_{f, Z_n + b} - s_{f, Z_n + b + b'} \Vert_{L_2(\Omega)} 
&\leq \Vert s_{f, Z_n + b} - f \Vert_{L_2(\Omega)} + \Vert f - s_{f, Z_n + b + b'} \Vert_{L_2(\Omega)} \\
&\leq  C h_{Z_n + b}^\beta + C h_{Z_n + b + b'}^\beta \\
&\leq C 2^{-n\beta},
\end{align*}
where we used in the final step \Cref{prop:estimate_sep_fill_dist_Z} to estimate the fill distances.
Since $Z_n+b$ and $Z_n+b+b'$ are disjoint,
the squared norm of the vector of coefficients of $s_{f, Z_n + b} - s_{f, Z_n + b + b'}$
is given by the sum of squared norms of the vector of coefficients of $s_{f, Z_n + b}$ and of $s_{f, Z_n + b + b'}$,
i.e.\ by $\Vert \alpha_{f, Z_n + b} \Vert_{\ell_2}^2 + \Vert \alpha_{f, Z_n + b + b'} \Vert_{\ell_2}^2$.
Thus by \Cref{prop:estimate_lp_coeff_norm} we obtain
\begin{align*}
\Vert \alpha_{f, Z_n + b} \Vert_{\ell_2}^2 + \Vert \alpha_{f, Z_n + b + b'} \Vert_{\ell_2}^2 
&\leq C^2 q_{(Z_n + b) \cup (Z_n + b + b')}^{d-4\tau} \Vert s_{f, Z_n + b} - s_{f, Z_n + b + b'} \Vert_{L_2(\Omega)}^2 \\
&\leq C 2^{-(n+1)(d-4\tau)} 2^{-2n\beta} \\
&\leq C 2^{-nd - 2n(\beta - 2\tau)},
\end{align*}
and in particular
\begin{align}
\label{eq:intermediate_eq_1}
\Vert \alpha_{f, Z_n + b} \Vert_{\ell_2}^2 &\leq C 2^{-nd - 2n(\beta - 2\tau)}.
\end{align}
With this bound, which holds for any $b \in [0, 2^{-n}]^d$,
we can bound $\Vert D_{Z_n}(f) \Vert_{L_2(\Omega)}$:
For this, we start by bounding for any $b \in [0, 2^{-n}]^d$:
\begin{align*}
\Vert D_{Z_n}(f) \Vert_{\ell_2(Z_n + b)}^2
:=&~ \sum_{z \in Z_n + b} |D_{Z_n}(f)(z)|^2 \\
=&~ \sum_{z \in Z_n + b} |2^{nd} \alpha_{f, Z_n + b; z}|^2 \\
=&~ 2^{2nd} \cdot \Vert \alpha_{f, Z_n+b} \Vert_{\ell_2}^2 \\
\leq&~ C 2^{nd-2n(\beta-2\tau)}.
\end{align*}
Due to the piecewise continuity according to \Cref{prop:piecewise_continuous},
we can approximate the integral $\Vert D_{Z_n}(f) \Vert_{L_2(\Omega)}$ via Riemann sums:
\begin{align*}
\Vert D_{Z_n}(f) \Vert_{L_2(\Omega)}^2 
&= \int_\Omega |D_{Z_n}(f)(x)|^2 ~ \mathrm{d}x \\
&= \sum_{z \in Z_n} \int_{z + [0, 2^{-n}]^d} |D_{Z_n}(f)(x)|^2 ~ \mathrm{d}x \\
&= \sum_{z \in Z_n} \lim_{N \rightarrow \infty} \frac{2^{-nd}}{N^d} \sum_{b \in \frac{2^{-n}}{N} \{ 0, ..., N-1\}^d} |D_{Z_n}(f)(z + b)|^2 \\
&= \lim_{N \rightarrow \infty} \frac{2^{-nd}}{N^d} \sum_{b \in \frac{2^{-n}}{N} \{ 0, ..., N-1\}^d} \sum_{z \in Z_n} |D_{Z_n}(f)(z + b)|^2 \\
&\leq \lim_{N \rightarrow \infty} \frac{2^{-nd}}{N^d}
\sum_{b \in \frac{2^{-n}}{N} \{ 0, ..., N-1\}^d} C 2^{nd - 2n(\beta-2\tau)} \\
&= C 2^{-2n(\beta-2\tau)}.
\end{align*}
\end{proof}
\noindent Note that 
\Cref{prop:L2_bound_density_approx_2} immediately implies the bound
\begin{align*}
\Vert D_{Z_n} f - D_{Z_{n+1}} f \Vert_{L_2(\Omega)} \leq C 2^{n(2\tau-\beta)}
\end{align*}
by triangle inequality.

\section{Sharp inverse statements and saturation} \label{sec:cont_superconv_inverse}

This section states and proves the main result in \Cref{subsec:main_result},
and discusses it subsequently in \Cref{subsec:discussion_main_result}.

\subsection{Main result}
\label{subsec:main_result}

We restate the main theorem, 
briefly elaborate on the proof strategy and then have the full proof:

\mainresult*

Before providing the proof, we briefly elaborate on the proof strategy:
The case $\beta \in (0, \tau]$ was covered in \cite{wenzel2025sharp} and refined in \cite{avesani2025sobolev},
and is here addressed again due to a mild change in the assumptions.
For its proof, one considers the sequence of approximants $(s_{f, X_n})_{n \in \N}$ for a sequence $(X_n)_{n \in \N}$ satisfying \Cref{ass:points}.
Using the Bernstein inequality from \Cref{prop:escaping_bernstein},
one can show that this sequence is a Cauchy sequence in $\calh_{\vartheta'}(\Omega)$ for any $\vartheta' < \frac{\beta}{\tau}$.
Standard arguments then conclude the proof.
For the new case of $\beta \in (\tau, 2\tau - \frac{d}{2})$,
one can consider the same sequence $(s_{f, X_n})_{n \in \N}$, as actually $k(\cdot, x) \in \calh_\vartheta(\Omega)$ for $\vartheta < 2 - \frac{d}{2\tau}$ by Eq.~\eqref{eq:kernel_inclusion}.  
Using the novel superconvergence Bernstein inequality from \Cref{prop:superconv_bernstein},
the same proof strategy still works.
The other new case of $\beta \in (2\tau - \frac{d}{2}, 2\tau]$ is the more challenging one, as it holds $k(\cdot, x) \notin \calh_\vartheta(\Omega)$,
such that the previous proof strategy does no longer work.
Therefore the idea is to leverage Eq.~\eqref{eq:power_space_via_closure},
and use approximants from $TL_2(\Omega)$ instead.
The construction of such density functions $(D_{Z_n}(f))_{n \in \N}$ was derived and discussed in \Cref{subsec:construction_density_func}, 
based on the idea that $k(\cdot, x)$ can be approximated via $T\delta_{x, \varepsilon} = \int_\Omega k(\cdot, z)\delta_{x, \varepsilon}(z) ~ \mathrm{d}z$,
where $\delta_{x, \varepsilon}$ is an approximation of the Dirac delta for $\varepsilon \rightarrow 0$.
For the final case of $\beta > 2\tau$,
one considers again the sequence of density functions $(D_{Z_n}(f))_{n \in \N}$ and shows that $D_{Z_n}(f) \stackrel{n \rightarrow \infty}{\longrightarrow} 0$,
which then implies $f=0$.

\begin{proof}[Proof of \Cref{th:main_result}]
We distinguish three cases, depending on the range of $\vartheta$: \\
\textbf{First case:} For $\beta \in (0, 2\tau-\frac{d}{2}]$, we use the same constructive idea as in \cite{schaback2002inverse,wenzel2025sharp,avesani2025sobolev},
because we have $k(\cdot, x) \in \Ht$ for $\vartheta \in \left( 0, 2-\frac{d}{2\tau} \right)$ by Eq.~\eqref{eq:kernel_inclusion}.
Thus we apply Eq.~\eqref{eq:superconv_bernstein} to $s_{f, X_{n+1}} - s_{f, X_{n}}$:
\begin{align*}
\Vert s_{f, X_{n+1}} - s_{f, X_{n}} \Vert_{\calh_{\vartheta}(\Omega)} 
\leq C q_{X_{n+1}}^{-\vartheta\tau} \Vert s_{f, X_{n+1}} - s_{f, X_{n}} \Vert_{L_2(\Omega)}.
\end{align*}
Inserting $-f + f$, we can estimate $\Vert s_{f, X_{n+1}} - s_{f, X_{n}} \Vert_{L_2(\Omega)}$ by the assumption Eq.~\eqref{eq:inverse_statement_assumption}, 
and subsequently estimate the fill distances via the assumption on the sequence of points $(X_n)_{n \in \N}$ as formulated in \Cref{ass:points}:
\begin{align*}
\Vert s_{f, X_{n+1}} - s_{f, X_{n}} \Vert_{\calh_{\vartheta}(\Omega)} 
&\leq C q_{X_{n+1}}^{-\vartheta\tau} \left( c_f h_{X_{n+1}}^\beta + c_f h_{X_n}^\beta \right) \\
&\leq C a^{-(n+1) \vartheta \tau} a^{n\beta} \\
&\leq C a^{n(\beta-\vartheta \tau)}.
\end{align*}
For $\beta - \vartheta \tau > 0$, the exponent is positive, such that we are able to show that $(s_{f, X_n})_{n \in \N}$ is actually a Cauchy sequence:
Let $n > m \geq m_0$:
\begin{align*}
\Vert s_{f, X_{n}} - s_{f, X_m} \Vert_{\mathcal{H}_{\vartheta'}(\Omega)} &= \left \Vert \sum_{\ell=m}^{n-1} s_{f, X_{\ell+1}} - s_{f, X_{\ell}} \right \Vert_{\mathcal{H}_{\vartheta'}(\Omega)} \notag \\
&\leq \sum_{\ell=m}^{n-1} \Vert s_{f, X_{\ell+1}} - s_{f, X_{\ell}} \Vert_{\mathcal{H}_{\vartheta'}(\Omega)} \notag \\
&\leq \sum_{\ell=m}^{\infty} a^{\ell (\beta - \vartheta \tau)}
\leq \frac{a^{m_0 (\beta - \vartheta \tau)}}{1-a^{\beta - \vartheta \tau}} \stackrel{m_0 \rightarrow \infty}{\longrightarrow} 0
\end{align*}
due to $a \in (0, 1)$ and $\beta - \vartheta \tau > 0 \Leftrightarrow \vartheta < \frac{\beta}{\tau}$.
Since $\mathcal{H}_{\vartheta}(\Omega)$ is a complete space,
we obtain a unique limit element $\tilde{f} \in \mathcal{H}_{\vartheta}(\Omega)$. 
Finally we can easily show that $\tilde{f}$ coincides with $f$ by adding $0 = s_{f, X_n} - s_{f, X_n}$:
\begin{align*}
\Vert f - \tilde{f} \Vert_{L_2(\Omega)} 
&\leq \Vert f - s_{f, X_n} \Vert_{L_2(\Omega)} + \Vert \tilde{f} - s_{f, X_n} \Vert_{L_2(\Omega)} \\
&\leq c_f h_{X_n}^{\mu} + C_{\mathcal{H}_{\vartheta'}(\Omega) \hookrightarrow L_2(\Omega)} \Vert \tilde{f} - s_{f, X_n} \Vert_{\mathcal{H}_{\vartheta}(\Omega)} \stackrel{n \rightarrow \infty}{\longrightarrow} 0.
\end{align*}
Thus we have $f = \tilde{f}$ in $L_2(\Omega)$,
i.e.\ $f$ can be seen as a particular representative of $\tilde{f}$, 
such that we have proven that $f \in \mathcal{H}_{\vartheta}(\Omega)$ for any $\vartheta < \frac{\beta}{\tau}$. 

\textbf{Second case:} For $\beta \in (2\tau - \frac{d}{2}, \tau]$:
We consider the sequence of density functions $(D_{Z_n} f)_{n \in \N}$ as defined in \Cref{subsec:construction_density_func}.
We would like to show that $(TD_{Z_n})_{n \in \N}$ is a Cauchy sequence in $\mathcal{H}_\vartheta(\Omega)$ for some $\vartheta \in (2-\frac{d}{2\tau}, 2)$.
For this we consider for some $\vartheta \in (2 - \frac{d}{2\tau}, 2)$
\begin{align}
\label{eq:sum_to_consider}
\Vert TD_{Z_{n+1}} f - TD_{Z_n} f \Vert_{\mathcal{H}_{\vartheta}(\Omega)}^2 
&= \sum_{j=1}^\infty \frac{\left| \langle T(D_{Z_{n+1}}(f) - D_{Z_n}(f)), \varphi_j \rangle_{L_2(\Omega)} \right|^2}{\lambda_j^{\vartheta}} \notag \\
&= \sum_{j=1}^\infty \lambda_j^{2-\vartheta} \left| \langle D_{Z_{n+1}}(f) - D_{Z_n}(f), \varphi_j \rangle_{L_2(\Omega)} \right|^2,
\end{align}
where we used that $T: L_2(\Omega) \rightarrow L_2(\Omega)$ is self-adjoint and that $T\varphi_j = \lambda_j \varphi_j$.
Note that $\sum_{j=1}^\infty \left| \langle D_{Z_{n+1}} f - D_{Z_n} f, \varphi_j \rangle_{L_2(\Omega)} \right|^2 = \Vert D_{Z_{n+1}}(f) - D_{Z_n}(f) \Vert_{L_2(\Omega)}^2$,
for which we have only a growing upper bound in $n$ according to \Cref{prop:approx_by_TD}.
Thus the idea is to counter this growth via $\lambda_j^{2-\vartheta}$, which is a decaying term (in $j$).
We split the sum of Eq.~\eqref{eq:sum_to_consider} into two parts, small indices up to $F(n)$ (to be determined) and large indices beyond:
\begin{itemize}
\item \textbf{$j$ large}: We make use of $\lambda_{F(n)+1}^{2-\vartheta}$ being small:
\begin{align*}
&~~~ \sum_{j=F(n)+1}^\infty \lambda_j^{2-\vartheta} \left| \langle D_{Z_{n+1}}(f) - D_{Z_n}(f), \varphi_j \rangle_{L_2(\Omega)} \right|^2 \\
&\leq \lambda_{F(n)}^{2-\vartheta} \cdot \Vert D_{Z_{n+1}}(f) - D_{Z_n}(f) \Vert_{L_2(\Omega)}^2 \\
&\leq \lambda_{F(n)}^{2-\vartheta} \cdot \left( \Vert D_{Z_{n+1}}(f) \Vert_{L_2(\Omega)} + \Vert D_{Z_n}(f) \Vert_{L_2(\Omega)} \right)^2 \\
&\leq C\lambda_{F(n)}^{2-\vartheta} \cdot 2^{2n(2\tau-\beta)}.
\end{align*}
In the last step, \Cref{prop:approx_by_TD} was leveraged for bounding the $L_2(\Omega)$-norms.
\item \textbf{$j$ small}:
In this case, we make use of the lengthy but straightforward estimate provided in \Cref{subsec:estimate_for_sum},
which shows that it holds
\begin{align*}
\sum_{j=1}^{F(n)} \lambda_j^{2-\vartheta} \left| \langle D_{Z_{n+1}}(f) - D_{Z_n}(f), \varphi_j \rangle_{L_2(\Omega)} \right|^2 \leq C \lambda_{F(n)}^{-\vartheta} 2^{-2n\beta}.
\end{align*}
\end{itemize}

In view of Eq.~\eqref{eq:sum_to_consider}, and given estimates for $j$ small and $j$ big, 
we can combine both estimates and subsequently choose an optimized value of $F(n)$:
\begin{align*}
\Vert TD_{Z_{n+1}} f - TD_{Z_n} f \Vert_{\mathcal{H}_{\vartheta}(\Omega)}^2 
\leq C\lambda_{F(n)}^{2-\vartheta} \cdot 2^{n(4\tau-2\beta)} + C\lambda_{F(n)}^{-\vartheta} 2^{-2n\beta}.
\end{align*}
This expression can be minimized in $\lambda = \lambda_{F(n)}$:
\begin{align*}
\frac{\mathrm{d}}{\mathrm{d}\lambda} (\lambda^{2-\vartheta} 2^{n(4\tau-2\beta)} + \lambda^{-\vartheta} \cdot 2^{-2n\beta}) &\stackrel{!}{=} 0 \\
\Leftrightarrow ~ (2-\vartheta) \lambda^{1-\vartheta} 2^{n(4\tau-2\beta)} 
-\vartheta \lambda^{-\vartheta-1} 2^{-2n\beta} &\stackrel{!}{=} 0 \\
\Leftrightarrow \lambda_{F(n)} &= \sqrt{\frac{\vartheta}{2-\vartheta}} \cdot 2^{-2n\tau}.
\end{align*}
Since the eigenvalues $\lambda_j$ of a Sobolev kernel satisfy $\lambda_j \asymp j^{-2\tau/d}$ (see Eq.~\eqref{eq:asympt_eigvals}),
we pick $F(n) = 2^{nd}$, 
such that $\lambda_{F(n)} \asymp 2^{-2\tau n}$.
With this bound for $\lambda_{F(n)}$, we proceed as
\begin{align*}
\Vert TD_{X_{n+1}} f - TD_{X_n} f \Vert_{\mathcal{H}_{\vartheta}(\Omega)}^2 
&\leq C \left( \lambda_{F(n)}^{2-\vartheta} \cdot 2^{n(4\tau - 2\beta)} + \lambda_{F(n)}^{-\vartheta} \cdot 2^{-2n\beta} \right) \\
&\leq C \left( 2^{-2n\tau \cdot (2-\vartheta)} 2^{n(4\tau-2\beta)} + 2^{2n \tau\vartheta} 2^{-2n\beta} \right) \\
&\leq C 2^{2n(\tau \vartheta - \beta)} \\
\Rightarrow \Vert TD_{X_{n+1}} f - TD_{X_n} f \Vert_{\mathcal{H}_{\vartheta}(\Omega)} &\leq \sqrt{C} 2^{n(\tau \vartheta - \beta)}.
\end{align*} 
The exponent is negative for $\vartheta < \frac{\beta}{\tau}$.
With this estimate at hand, we can prove that $(TD_{X_n}(f))_{n \in \N}$ is a Cauchy sequence in $\mathcal{H}_\vartheta(\Omega)$ for $\vartheta < \frac{\beta}{\tau}$:
Let $n > m \geq m_0$:
\begin{align*}
\Vert TD_{X_{n}}(f) - TD_{X_m}(f) \Vert_{\mathcal{H}_\vartheta(\Omega)} 
&= 	\left \Vert \sum_{\ell=m}^{n-1} TD_{X_{\ell+1}}(f) - TD_{X_{\ell}}(f) \right \Vert_{\mathcal{H}_\vartheta(\Omega)} \\
&\leq \sum_{\ell=m}^{n-1} \left \Vert TD_{X_{\ell+1}}(f) - TD_{X_{\ell}}(f) \right \Vert_{\mathcal{H}_\vartheta(\Omega)} \\
&\leq \sum_{\ell=m}^{n-1} \sqrt{C} 2^{n(\tau \vartheta - \beta)} \\
&\leq \sqrt{C} \frac{2^{m_0(\tau\vartheta-\beta)}}{1-2^{\tau\vartheta - \beta}} \stackrel{m_0 \rightarrow \infty}{\longrightarrow} 0.
\end{align*}
Since $\mathcal{H}_\vartheta(\Omega)$ is a complete space and $(TD_{X_n}(f))_{n \in \N}$ is a Cauchy sequence in $\mathcal{H}_\vartheta(\Omega)$,
there exists a unique limiting element $\tilde{f} \in \mathcal{H}_\vartheta(\Omega)$.
Finally, $f$ coincides with $\tilde{f}$ on $\Omega$ as elements of $L_2(\Omega)$
due to \Cref{prop:approx_by_TD} and the previous calculation:
\begin{align*}
\Vert f - \tilde{f} \Vert_{L_2(\Omega)} 
&\leq \Vert f - TD_{Z_n}(f) \Vert_{L_2(\Omega)} + \Vert \tilde{f} - TD_{Z_n}(f) \Vert_{L_2(\Omega)} \\
&\leq C 2^{-n\beta} + C_{\mathcal{H}_\vartheta(\Omega) \hookrightarrow L_2(\Omega)}  \Vert \tilde{f} - TD_{Z_n}(f) \Vert_{\mathcal{H}_\vartheta(\Omega)} \stackrel{n \rightarrow \infty}{\longrightarrow} 0.
\end{align*}
Thus $f$ can be seen as a particular representative of $\tilde{f}$,
and thus we have proven that $f \in \mathcal{H}_\vartheta(\Omega)$ for all $\vartheta < \frac{\beta}{\tau}$. \\
\textbf{Third case}: For $\beta > 2\tau$,
we again consider the sequence of density functions $(D_{Z_n}(f))_{n \in \N}$.
We make use of \Cref{prop:L2_bound_density_approx_2}, i.e.\ we obtain
\begin{align*}
\Vert D_{Z_n}(f) \Vert_{L_2(\Omega)} \leq C 2^{-n(\beta-2\tau)} \stackrel{n \rightarrow \infty}{\longrightarrow} 0
\end{align*}
because of $\beta > 2\tau$.
Since $T$ is a bounded operator, 
we also have 
\begin{align*}
\Vert TD_{Z_n}(f) \Vert_{L_2(\Omega)} \stackrel{n \rightarrow \infty}{\longrightarrow} 0.
\end{align*} 
On the other hand, by \Cref{prop:approx_by_TD} we have $\Vert f - TD_{Z_n}(f) \Vert_{L_2(\Omega)} \stackrel{n \rightarrow \infty}{\longrightarrow} 0$.
Thus we obtain
\begin{align*}
\Vert f \Vert_{L_2(\Omega)} &\leq \Vert f - TD_{Z_n}(f) \Vert_{L_2(\Omega)} + \Vert TD_{Z_n}(f) \Vert_{L_2(\Omega)} \stackrel{n \rightarrow \infty}{\longrightarrow} 0,
\end{align*}
i.e.\ $f=0$.
\end{proof}

As a next step, we first present an immediate corollary of \Cref{th:main_result},
which states a saturation result using $L_p(\Omega)$, $2 \leq p \leq \infty$:

\begin{cor}
\label{cor:saturation_Lp}
Consider a compact Lipschitz region $\Omega \subset \R^d$ and a continuous kernel $k$ such that $\ns \asymp H^\tau(\Omega)$ for some $\tau > d/2$.
Consider $f \in \mathcal{C}(\Omega)$ and the estimate
\begin{align}
\label{eq:inverse_statement_assumption_Lp}
\Vert f - s_{f, X} \Vert_{L_p(\Omega)} \leq c_f h_{X}^\beta,
\end{align}
for some $p \in [2, \infty]$,
where $s_{f, X} \in \Sp \{ k(\cdot, x), x \in X\}$ is a kernel-based approximant
satisfying \Cref{ass:method}.

If Eq.~\eqref{eq:inverse_statement_assumption_Lp} holds for some $\beta > 2\tau$ 
for any quasi-uniform sequence $(X_n)_{n \in \N} \subset \Omega$, 
then $f=0$.
\end{cor}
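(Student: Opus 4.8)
The plan is to reduce the $L_p(\Omega)$ assumption to the $L_2(\Omega)$ assumption of \Cref{th:main_result}~(iii) and then invoke that statement directly. The key observation is that on a bounded domain larger Lebesgue exponents control smaller ones: since $\Omega$ is compact, it has finite measure $\vol(\Omega) < \infty$, and Hölder's inequality yields the continuous embedding $L_p(\Omega) \hookrightarrow L_2(\Omega)$ for every $p \in [2, \infty]$, with
\[
\Vert g \Vert_{L_2(\Omega)} \leq \vol(\Omega)^{1/2 - 1/p} \Vert g \Vert_{L_p(\Omega)}
\qquad \text{for all } g \in L_p(\Omega),
\]
where we use the convention $1/p = 0$ for $p = \infty$.

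Next I would apply this embedding to the error $g = f - s_{f, X}$ and insert the assumed bound Eq.~\eqref{eq:inverse_statement_assumption_Lp}, which gives
\[
\Vert f - s_{f, X} \Vert_{L_2(\Omega)} \leq \vol(\Omega)^{1/2 - 1/p} \Vert f - s_{f, X} \Vert_{L_p(\Omega)} \leq c_f \vol(\Omega)^{1/2 - 1/p} h_X^\beta =: \tilde c_f h_X^\beta .
\]
By hypothesis this holds for any quasi-uniform sequence $(X_n)_{n \in \N} \subset \Omega$, in particular for those with uniformity constant $C_\Omega$, so it is exactly the assumption Eq.~\eqref{eq:inverse_statement_assumption} of \Cref{th:main_result} with the modified constant $\tilde c_f$ and the unchanged exponent $\beta > 2\tau$. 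Since \Cref{ass:method} is assumed in the statement of the corollary, \Cref{th:main_result}~(iii) applies and delivers $f = 0$.

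There is essentially no obstacle here: the whole argument is the elementary finite-measure embedding $L_p(\Omega) \hookrightarrow L_2(\Omega)$ for $p \geq 2$, followed by a black-box application of part (iii) of the main theorem. The only points worth noting are the endpoint $p = \infty$, where the embedding constant becomes $\vol(\Omega)^{1/2}$, and the trivial remark that phrasing the hypothesis for \emph{any} quasi-uniform sequence is precisely what makes it feed into \Cref{th:main_result}~(iii) without further adjustment.
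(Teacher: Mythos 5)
Your proposal is correct and follows essentially the same route as the paper: the Hölder (finite-measure) embedding $L_p(\Omega) \hookrightarrow L_2(\Omega)$ for $p \in [2,\infty]$ applied to $f - s_{f,X}$, followed by a direct application of \Cref{th:main_result}~(iii). Your write-up is in fact slightly more explicit about the embedding constant and the endpoint $p=\infty$ than the paper's proof.
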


\begin{proof}
Since $\Omega \subset \R^d$ is bounded,
Hölder inequality yields the inequality
$\Vert \cdot \Vert_{L_2(\Omega)} \leq |\Omega|^{1/2-1/p} \cdot \Vert \cdot \Vert_{L_\infty(\Omega)}$.
Hence Eq.~\eqref{eq:inverse_statement_assumption_Lp} implies Eq.~\eqref{eq:inverse_statement_assumption},
such that \Cref{th:main_result} yields the desired result $f=0$.
\end{proof}

Thus the convergence rate saturates at $2\tau$ for any $2 \leq p \leq \infty$.
Note that this improves and generalizes the previously known saturation result of \cite[Theorem 7.1]{schaback2002inverse} for thin-plate splines.

In the main result \Cref{th:main_result},
there is a slight difference in the assumptions for the regime $\beta \in [0, 2\tau - d/2)$ and $\beta \in [2\tau - d/2, 2]$.
In the first regime,
one assumes a decay rate for a single sequence $(X_n)_{n \in \N} \subset \Omega$ of quasi-uniform points,
while in the second regime one assumes a decay rate to hold for any set of quasi-uniform points.
This difference in the assumption is due to the fact that $k(\cdot, x) \in \mathcal{H}_\vartheta(\Omega)$ if and only if $\vartheta \in [0, 2-\frac{d}{2\tau})$, 
see Eq.~\eqref{eq:kernel_inclusion}:
Consider $f := k(\cdot, x_0)$,
and the point $x_0$ is included in $X_n, n \geq n_0$ within the the nested sequence $(X_n)_{n \in \N}$,
then it holds $s_{f, X_n} = f$ for all $n \geq n_0$.
Thus the approximation error $\Vert f - s_{f, X_n} \Vert_{L_2(\Omega)}$ 
will be zero, 
i.e.\ any arbitrary high approximation rate is realized.
Such an example can be easily extended to other situations,
e.g.\ where $x_0$ is actually never included in any $X_n$,
but the distance $\mathrm{dist}(x_0, X_n)$ decays with some (fast) rate.

However, this \emph{neither} implies $f \in \mathcal{H}_\vartheta(\Omega)$ for $\vartheta$ beyond $2-\frac{d}{2\tau}$ (due to Eq.~\eqref{eq:kernel_inclusion})
\emph{nor} $f=0$ (saturation).
Thus the difference in the assumptions in \Cref{th:main_result} --
for a particular sequence $(X_n)_{n \in \N}$ vs for any set of quasi-uniform points -- 
is crucial.

\subsection{Discussion of main result}
\label{subsec:discussion_main_result}

The main result \Cref{th:main_result} states an inverse result for finitely smooth Sobolev kernels with a focus on the superconvergence regime (for $\beta \in [\tau, 2\tau]$),
thus complementing recently derived corresponding direct statements \cite{karvonen2025general}.
Moreover, also the inverse statements of the escaping the native space regime (for $\beta \in (0, \tau]$) from the literature were slightly improved,
either by weaking assumptions on the point distribution \cite{wenzel2025sharp}
or by extending the class of applicable kernels \cite{avesani2025sobolev}.
Furthermore, also novel saturation statements have been derived, 
which naturally provide an upper limit on the convergence rate
and thus addresses one of the important research directions stated in \cite[Chapter 10]{buhmann2003radial}.

There are two key features about the inverse (superconvergence) statements:
First and most important, it establishes an one-to-one correspondence between smoothness (measured in terms of the power space parameter $\vartheta$) and the rate of approximation $\beta$,
thus extending the previously already established one-to-one correspondence from the escaping the native space regime \cite{wenzel2025sharp} also to the superconvergence regime:
Any continuous $f \in \mathcal{H}_\vartheta(\Omega)$ for $\vartheta \in (0, 2\tau]$ can be approximated by kernel interpolation with a rate of convergence (in the fill distance) of $h^{\vartheta \beta}$.
And if a function $f$ can be approximated by kernel interpolation with a rate of convergence of $h^{\vartheta \beta}$, 
one can conclude that $f \in \mathcal{H}_{\vartheta'}(\Omega)$ for all $\vartheta' < \vartheta$.
We will address implications and applications of this one-to-one correspondence for the challenging selection of an ``optimal shape parameter'' for RBF kernels in an follow up work \cite{wenzel2026optimal}.
Second, the result shows that the power spaces $\mathcal{H}_\vartheta(\Omega)$ for $\vartheta \in [1, 2]$ are the correct spaces to expect superconvergence to happen for.
This is important, because the previous literature on superconvergence \cite{schaback2018superconvergence,santin2022sampling,sloan2025doubling,karvonen2025general}
was able to show superconvergence for specific subspaces,
that satisfy certain embeddings.
However the relation between these different characterizations remained unclear.
The inverse result \Cref{th:main_result} now implies,
that these specific subspaces are always subspaces of some power space $\mathcal{H}_\vartheta(\Omega)$.

Next we disscuss the assumptions of the main result \Cref{th:main_result}:
The assumption on the set $\Omega$ of interest being a Lipschitz region is pretty standard,
and the assumption on the kernel being a Sobolev kernel covers plenty of kernels that are used in practice,
including Wendland and Matérn kernels.
The assumption on the set of points -- 
either a single nested sequence of quasi-uniform points vs for any set of quasi-uniform points -- was mostly already discussed below \Cref{th:main_result}.
Instead of assuming a decay rate in the $L_2(\Omega)$ norm, 
one could also either consider some $\mathcal{H}_\vartheta(\Omega)$ norm (smoothness scale),
some $L_p(\Omega)$ norm (integrability scale) 
or a combination of both.
Regarding the first option, this is easily possible by combining the main result \Cref{th:main_result} with the Bernstein inequalities provided in \Cref{th:bernstein}.
The second option, and thus also the third one,
is possible in the range $p \in [2, \infty]$,
due to the embeddings $L_p(\Omega) \hookrightarrow L_2(\Omega)$ (see e.g.\ \Cref{cor:saturation_Lp}) and the availability of corresponding direct statements \cite{wendland2005approximate}.
The range $p \in [1, 2]$ is more challenging.
Nevertheless the $\ell_p-L_p$ Bernstein inequalities established in \cite[Theorem 5.3]{ward2012lp} (and similar \cite{mhaskar2010bernstein} for the sphere) may provide helpful tools to generalize the analysis provided here for $p=2$ to the more general cases,
and thus completing the full picture for $TL_p(\Omega)$ spaces as provided in \cite[Figure 2]{karvonen2025general}.
We remark that for $p \in [1, 2]$ one likely needs to resort to Besov spaces instead of Hilbertian power spaces.

Finally we comment on the role of the approximant $s_{f, X}$:
On purpose, the main result \Cref{th:main_result} was formulated for general point based approximants $s_{f, X} \in \Sp \{ k(\cdot, x), x \in X \}$ (partly assuming the mild condition of \Cref{ass:method}),
not only focussing on kernel interpolants.
This allows to apply the result not only to kernel interpolation,
but also to other approximation schemes like regularized interpolation, 
least-squares approximation or kernel Galerkin methods.
Note that the corresponding direct statements are not yet available,
as the direct superconvergence statements in \cite{karvonen2025general} relied explicitly on the use of projections (see e.g.\ \cite[Theorem 6]{karvonen2025general}).

Direct and inverse statements on manifold are also well researched on the sphere,
and e.g.\ \cite{fuselier2012scattered} provides direct statements in the superconvergence regime.
However corresponding inverse statements as well as saturation statements seem to be missing.
We expect that our proof techniques of \Cref{th:main_result} can also be transfered to the case of $\Omega$ being a sphere or more generally a manifold.
For the special case of the sphere,
the analysis might be even more simple due to the absense of a boundary and the explicit availability of spherical basis functions.

Finally we observe the following implication of the main result \Cref{th:main_result} and the previously established direct and inverse statements:
For $\vartheta \in [0, 2]$,
the  kernel-based approximation $s_{f, X_n}$ of $f \in \mathcal{H}_\vartheta(\Omega)$ using well-distributed points $X_n \subset \Omega$ provides the same rate of approximation as approximation with the Mercer eigenfunctions.
For $\vartheta > 2$,
the kernel-based approximation $s_{f, X_n}$ can no longer match the rate of approximation using eigenfunctions,
due to the saturation established in \Cref{th:main_result} and \Cref{cor:saturation_Lp}.
Let $V_n := \Sp \{ \varphi_j, j=1, ..., n\}$ and recall the classical error estimate using eigenfunctions
\begin{align*}
\Vert f - \Pi_{V_n}(f) \Vert_{L_2(\Omega)}^2 &= \sum_{j=N+1}^\infty \lambda_j^\vartheta \cdot \frac{|\langle f, \varphi_j \rangle_{L_2(\Omega)}|^2}{\lambda_j^\vartheta} \leq \lambda_{N+1}^\vartheta \cdot \Vert f \Vert_{L_2(\Omega)}^2.
\end{align*}
Due to $\lambda_N \asymp N^{-2\tau/d}$ (see Eq.~\eqref{eq:asympt_eigvals}) and $h_X \asymp |X|^{-1/d}$ for well distributed $X \subset \Omega$,
this results in the same error rates as using kernel-based approximants, 
see Eq.~\eqref{eq:error_bound_escaping} and Eq.~\eqref{eq:error_bound_superconv}.
However, this approximation rate using Mercer eigenfunctions is not limited by $\vartheta \leq 2$, as for kernel-based approximation due to the saturation result in \Cref{th:main_result} and \Cref{cor:saturation_Lp}.
On the other hand, the kernel-based approximation is actually computable, 
whereas the Mercer eigenfunctions are usually unknown.

\section{Conclusion}
\label{sec:conclusion}

This paper proved sharp inverse and saturation statements for kernel-based approximation using finitely smooth kernels with a focus on the superconvergence regime.
Thus this analysis complements recently derived direct statements for the superconvergence regime.
This was enabled by deriving a range of important utility statements,
such as a generalized reproducing property and a superconvergence Bernstein inequality,
and furthermore proposing and anlyzing a density function construction.
Together with direct and inverse statments for the escaping the native space regime,
a complete characterization of the convergence rates was derived,
giving a one-to-one correspondence between convergence rate and smoothness (measured in terms of power spaces).

In future research we will discuss implications and applications of the established one-to-one correspondence.
In particular, we will address the prominent question of the optimal shape parameter from a theoretical point of view \cite{wenzel2026optimal}.

~ \\ 

\textbf{Acknowledgements:}
The author likes to thank Daniel Winkle and Gabriele Santin for helpful discussions.

\addtocontents{toc}{\protect\setcounter{tocdepth}{-10}} %
\IfFileExists{/home/wenzel/references.bib}{
\bibliography{/home/wenzel/references}				%
}{
\IfFileExists{/home/ians1/wenzeltn/references.bib}{
\bibliography{/home/ians1/wenzeltn/references}      %
}{
\bibliography{/home/wenzeltn/references}			%
}
}
\bibliographystyle{abbrv}

\appendix

\section{Outsourced calculations}

\subsection{Bernstein inequality in the escaping regime}
\label{subsec:bernstein_ineq_proof}

\begin{proof}[Proof of \Cref{prop:escaping_bernstein}]
The proof follows by realizing that \cite[Lemma 4.1]{zhengjie2025inverse} actually holds for any $u \in H^m(\Omega)$,
and \cite[Lemma 4.3]{zhengjie2025inverse} holds as long as the RKHS of the kernel is norm-equivalent to $H^\tau(\Omega)$, 
since we make use of the fact that $u$ interpolates $f_\sigma$ in $X$.
We provide the details:

We start by noting that \cite[Lemma 4.1]{zhengjie2025inverse} actually holds for any $u \in H^m(\Omega)$,
i.e.\ no kernel is involved in this lemma yet.
Next we observe, that \cite[Lemma 4.3]{zhengjie2025inverse} also works for any Sobolev kernel,
i.e.\ for any $\alpha \in (d/2, \tau]$ there exists a constant $C > 0$ such that it holds
\begin{align}
\label{eq:intermediate_eq_2}
\Vert u \Vert_{H^\tau(\Omega)} \leq C q_X^{-\tau + \alpha} \Vert u \Vert_{H^\alpha(\Omega)}
\end{align}
for any trial functions $u \in \Sp \{ k(\cdot, x), x \in X\}$:
In fact, applying \cite[Lemma 4.1]{zhengjie2025inverse} to $u \in \Sp \{ k(\cdot, x), x \in X \} \subset \ns \asymp H^\tau(\Omega) \subseteq H^\alpha(\Omega)$ yields a band-limited function $f_{\sigma, u, \alpha, \Omega} \in \mathcal{B}_\sigma$ for some $\sigma \asymp q_X^{-1}$
such that
\begin{align*}
u|_X &= f_{\sigma, u, \alpha, \Omega}|_X, \\
\Vert f_{\sigma, u, \alpha, \Omega} \Vert_{H^\alpha(\R^d)} &\leq C_{\alpha, \tau, \Omega} \Vert u \Vert_{H^\alpha(\Omega)}, \\
\Vert u - f_{\sigma, u, \alpha, \Omega} \Vert_{H^\alpha(\Omega)} &\leq C'_{\alpha, m, \Omega} q_X^{\tau-\alpha} \Vert u \Vert_{H^m(\Omega)},
\end{align*}
and any $f_\sigma \in \mathcal{B}_\sigma$ satisfies the Bernstein inequality
\begin{align*}
\Vert f_\sigma \Vert_{H^\tau(\Omega)} \leq 2^{(\tau-\beta)/2} \max \{ 1, \sigma^{\tau-\beta} \} \Vert f_\sigma \Vert_{H^\beta(\R^d)}.
\end{align*}
Thus we may estimate by noting that $u = s_{f_{\sigma, u, \alpha, \Omega}, X}$,
\begin{align*}
\Vert u \Vert_{H^\tau(\Omega)} 
&\leq \Vert u - f_{\sigma, u, \alpha, \Omega} \Vert_{H^\tau(\Omega)} + \Vert f_{\sigma, u, \alpha, \Omega} \Vert_{H^\tau(\Omega)} \\
&= \Vert s_{f_{\sigma, u, \alpha, \Omega}, X} - f_\sigma \Vert_{H^\tau(\Omega)} + \Vert f_\sigma \Vert_{H^\tau(\Omega)} \\
&\leq C \Vert f_{\sigma, u, \alpha, \Omega} \Vert_{H^\tau(\Omega)}.
\end{align*}
In the last inequality, we used that $s_{f_{\sigma, u, \alpha, \Omega}, X}$ is the orthogonal projection of $f_{\sigma, u, \alpha, \Omega}$ onto $\Sp \{ k(\cdot, x), x \in X \} \subset \ns \asymp H^\tau(\Omega)$.
Using aboves inequalities due to \cite[Lemma 4.1]{zhengjie2025inverse},
we can further estimate
\begin{align*}
\Vert f_{\sigma, u, \alpha, \Omega} \Vert_{H^\tau(\Omega)}
\leq \Vert f_{\sigma, u, \alpha, \Omega} \Vert_{H^\tau(\R^d)}
&\leq 2^{(\tau-\beta)/2} \max \{ 1, \sigma^{\tau-\beta} \} \Vert f_{\sigma, u, \alpha, \Omega} \Vert_{H^\beta(\R^d)} \\
&\leq 2^{(\tau-\beta)/2} \max \{ 1, \sigma^{\tau-\beta} \} \Vert f_{\sigma, u, \alpha, \Omega} \Vert_{H^\beta(\Omega)} \\
&\leq C q_X^{-\tau+\beta} \Vert f_{\sigma, u, \alpha, \Omega} \Vert_{H^\beta(\Omega)},
\end{align*}
establishing Eq.~\eqref{eq:intermediate_eq_2}. \\
We continue to estimate the right hand side norm $\Vert u \Vert_{H^\alpha(\Omega)}$ of Eq.~\eqref{eq:intermediate_eq_2} via
Gangliardo-Nierenberg interpolation inequality as $\Vert u \Vert_{H^\alpha(\Omega)} \leq C \Vert u \Vert_{L_2(\Omega)}^{1-\alpha/\tau} \Vert u \Vert_{H^\tau(\Omega)}^{\alpha/\tau}$.
Rearranging the terms yields
\begin{align*}
\Vert u \Vert_{H^\tau(\Omega)} \leq C q_X^{-\tau} \Vert u \Vert_{L_2(\Omega)}.
\end{align*}
Another application of Gangliardo-Nierenberg interpolation inequality as in \cite[Theorem 3.4]{avesani2025sobolev} yields the final result:
\begin{align*}
\Vert u \Vert_{H^{\vartheta \tau}(\Omega)} 
&\leq  C \Vert u \Vert_{L_2(\Omega)}^{1-\vartheta} \Vert u \Vert_{H^{\tau}(\Omega)}^{\vartheta} \\
&\leq C \Vert u \Vert_{L_2(\Omega)}^{1-\vartheta} \left( C q_X^{-\tau} \Vert u \Vert_{L_2(\Omega)} \right)^{\vartheta} \\
&= C q_X^{–\vartheta\tau} \Vert u \Vert_{L_2(\Omega)}.
\end{align*}
\end{proof}

\subsection{Estimate for sum}
\label{subsec:estimate_for_sum}

\begin{prop}
In the setting of \Cref{th:main_result} it holds
\begin{align*}
\sum_{j=1}^{F(n)} \lambda_j^{2-\vartheta} \left| \langle D_{Z_{n+1}}(f) - D_{Z_n}(f), \varphi_j \rangle_{L_2(\Omega)} \right|^2 \leq C \lambda_{F(n)}^{-\vartheta} 2^{-2n\beta}.
\end{align*}

\end{prop}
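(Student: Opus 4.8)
The plan is to bound the small-index part of the sum by exploiting that $\lambda_j^{2-\vartheta}$ is bounded by $\lambda_{F(n)}^{2-\vartheta}$ only helps for $j$ near $F(n)$; the genuinely useful fact for small $j$ is instead that the eigenvalues are \emph{large} there, so dividing and multiplying by suitable powers of $\lambda_j$ lets us trade into a norm we control. Concretely I would write
\begin{align*}
\lambda_j^{2-\vartheta} \left| \langle D_{Z_{n+1}}(f) - D_{Z_n}(f), \varphi_j \rangle_{L_2(\Omega)} \right|^2
= \lambda_j^{2} \cdot \frac{\left| \langle D_{Z_{n+1}}(f) - D_{Z_n}(f), \varphi_j \rangle_{L_2(\Omega)} \right|^2}{\lambda_j^{\vartheta}},
\end{align*}
recognize $\lambda_j \langle D_{Z_{n+1}}(f) - D_{Z_n}(f), \varphi_j \rangle_{L_2(\Omega)} = \langle T(D_{Z_{n+1}}(f) - D_{Z_n}(f)), \varphi_j \rangle_{L_2(\Omega)}$, and use that $\lambda_j \leq \lambda_1$ on the remaining factor of $\lambda_j^1$, so that up to a constant the sum is at most $\lambda_1 \sum_{j=1}^{F(n)} \lambda_j^{-\vartheta + 1}\left| \langle T(D_{Z_{n+1}}(f) - D_{Z_n}(f)), \varphi_j\rangle_{L_2(\Omega)}\right|^2 / \lambda_j^{?}$ — I will need to be careful which power ends up where. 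The cleanest bookkeeping is: on $\{1,\dots,F(n)\}$ we have $\lambda_j \geq \lambda_{F(n)}$, hence $\lambda_j^{2-\vartheta} = \lambda_j^{2} \lambda_j^{-\vartheta} \leq \lambda_j^{2}\lambda_{F(n)}^{-\vartheta}$ when $\vartheta \geq 0$, which immediately gives
\begin{align*}
\sum_{j=1}^{F(n)} \lambda_j^{2-\vartheta} \left| \langle D_{Z_{n+1}}(f) - D_{Z_n}(f), \varphi_j \rangle_{L_2(\Omega)} \right|^2
\leq \lambda_{F(n)}^{-\vartheta} \sum_{j=1}^{F(n)} \lambda_j^{2}\left| \langle D_{Z_{n+1}}(f) - D_{Z_n}(f), \varphi_j \rangle_{L_2(\Omega)} \right|^2.
\end{align*}
Since $\lambda_j^{2}\left| \langle D_{Z_{n+1}}(f) - D_{Z_n}(f), \varphi_j \rangle_{L_2(\Omega)} \right|^2 = \left| \langle T(D_{Z_{n+1}}(f) - D_{Z_n}(f)), \varphi_j \rangle_{L_2(\Omega)} \right|^2$, Parseval gives that the remaining sum is at most $\Vert TD_{Z_{n+1}}(f) - TD_{Z_n}(f)\Vert_{L_2(\Omega)}^2$.

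The remaining task is to bound $\Vert TD_{Z_{n+1}}(f) - TD_{Z_n}(f)\Vert_{L_2(\Omega)}$ by $C 2^{-n\beta}$. This follows from \Cref{prop:approx_by_TD}: writing $TD_{Z_{n+1}}(f) - TD_{Z_n}(f) = (TD_{Z_{n+1}}(f) - f) + (f - TD_{Z_n}(f))$ and applying the triangle inequality, each term is at most $C 2^{-(n+1)\beta}$ respectively $C 2^{-n\beta}$, hence the difference is at most $C 2^{-n\beta}$. Squaring yields $\Vert TD_{Z_{n+1}}(f) - TD_{Z_n}(f)\Vert_{L_2(\Omega)}^2 \leq C 2^{-2n\beta}$, and combining with the above gives exactly the claimed bound $C\lambda_{F(n)}^{-\vartheta} 2^{-2n\beta}$.

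The main (and really the only) subtlety is making sure the proof of \Cref{prop:approx_by_TD} applies here, i.e.\ that $(Z_n)_{n\in\N}$ is a sequence of quasi-uniform point sets so that the hypothesis ``for all quasi-uniform $X$'' can be invoked; this is guaranteed by \Cref{prop:estimate_sep_fill_dist_Z}, which shows $h_{Z_n}/q_{Z_n} \leq C_\Omega$ for $n \geq n_0$. So the estimate holds for all sufficiently large $n$, which is all that is needed for the Cauchy-sequence argument in the proof of \Cref{th:main_result}. One should also note that the manipulation $\lambda_j^{2-\vartheta} \leq \lambda_j^2 \lambda_{F(n)}^{-\vartheta}$ is valid precisely because $\vartheta \geq 0$ (indeed $\vartheta \in (2-\tfrac{d}{2\tau},2)$ in the relevant case), so no further case distinction is required.
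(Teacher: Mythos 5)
Your argument is correct, but it is a genuinely different and substantially shorter route than the paper's. The paper's proof works at the level of the defining integrals: it expands $\langle D_{Z_{n+1}}(f)-D_{Z_n}(f),\varphi_j\rangle_{L_2(\Omega)}$, uses the reproducing property together with Eq.~\eqref{eq:important_identity} to extract a factor $1/\lambda_j$, discretizes the integrals by Riemann sums, applies Cauchy--Schwarz over the Riemann nodes and Bessel over $j\le F(n)$, and then estimates the resulting discrete sums $\Vert s_{f,Z_{n+1}+b}-s_{f,Z_n+b_i+b}\Vert_{L_2(\Omega)}$ directly from the assumed rate and \Cref{prop:estimate_sep_fill_dist_Z} --- in effect re-proving a discrete analogue of \Cref{prop:approx_by_TD} inline. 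You instead use the self-adjointness identity $\lambda_j\langle v,\varphi_j\rangle_{L_2(\Omega)}=\langle Tv,\varphi_j\rangle_{L_2(\Omega)}$ with $v=D_{Z_{n+1}}(f)-D_{Z_n}(f)$, the monotonicity $\lambda_j\ge\lambda_{F(n)}$ for $j\le F(n)$ (valid since $\vartheta\ge 0$), Bessel's inequality, and then simply quote \Cref{prop:approx_by_TD} twice with the triangle inequality to get $\Vert TD_{Z_{n+1}}(f)-TD_{Z_n}(f)\Vert_{L_2(\Omega)}\le C2^{-n\beta}$; this yields exactly the claimed bound $C\lambda_{F(n)}^{-\vartheta}2^{-2n\beta}$ with no circularity, since \Cref{prop:approx_by_TD} is established independently. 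Your version makes the mechanism more transparent (applying $T$ trades the growing bound on $\Vert D_{Z_n}(f)\Vert_{L_2(\Omega)}$ from \Cref{prop:L2_bound_density_approx_2} for the decaying bound on $\Vert f-TD_{Z_n}(f)\Vert_{L_2(\Omega)}$, at the price of the factor $\lambda_{F(n)}^{-\vartheta}$), and it reuses an already-proven proposition instead of repeating the Riemann-sum manipulations; the paper's computation is longer but self-contained at the discrete level. Your remarks on quasi-uniformity of the shifted grids (via \Cref{prop:estimate_sep_fill_dist_Z}, for $n\ge n_0$) are exactly the hypothesis needed to invoke \Cref{prop:approx_by_TD}, and the exploratory detour via $\lambda_j\le\lambda_1$ at the start of your write-up is discarded and harmless; only the final bookkeeping $\lambda_j^{2-\vartheta}=\lambda_j^2\lambda_j^{-\vartheta}\le\lambda_j^2\lambda_{F(n)}^{-\vartheta}$ is used, and it is sound.
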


\begin{proof}
We start with basic computations, using the definition of $D_{Z_n}$ as introduced in \Cref{subsec:construction_density_func}:
\begin{align*}
&~~~\left| \langle D_{Z_{n+1}}(f) - D_{Z_n}(f), \varphi_j \rangle_{L_2(\Omega)} \right| \\
&= \left| \int_{\Omega} (D_{Z_{n+1}}(f) - D_{Z_n}(f))(x) \varphi_j(x) ~ \mathrm{d}x \right|.
\end{align*}
Note that both $D_{Z_n}(f)$ and $D_{Z_{n+1}}(f)$ are zero on $\Omega \setminus \left( Z_{n+1} + [0, 2^{-(n+1)}]^d \right)$ by definition,
such that we can restrict the integral to $Z_{n+1} + [0, 2^{-(n+1)}]^d$.
We continue as
\begin{align*}
&~~~\left| \langle D_{Z_{n+1}}(f) - D_{Z_n}(f), \varphi_j \rangle_{L_2(\Omega)} \right| \\
&= \Big| \sum_{z \in Z_{n+1}} \int_{z + [0, 2^{-(n+1)}]^d} (D_{Z_{n+1}}(f) - D_{Z_n}(f))(x) \varphi_j(x) ~ \mathrm{d}x \\
&= \Big| \sum_{z \in Z_{n+1}} \int_{[0, 2^{-(n+1)}]^d} (D_{Z_{n+1}}(f) - D_{Z_n}(f))(z+b) \langle k(\cdot, z+b), \varphi_j \rangle_{\ns} ~ \mathrm{d}b \Big| \\
&= \frac{1}{\lambda_j} \cdot \Big| \sum_{z \in Z_{n+1}} \int_{[0, 2^{-(n+1)}]^d} \left \langle (D_{Z_{n+1}}(f) - D_{Z_n}(f))(z+b) k(\cdot, z+b), \varphi_j \right \rangle_{L_2(\Omega)} \Big|.
\end{align*}
In the penultimate step, we made use of the reproducing property,
and in the last step we made use of $\varphi_j = \frac{1}{\lambda_j} T\varphi_j$ and subsequently Eq.~\eqref{eq:important_identity}.
As a next step, we write the integral as a limit of Riemann sums to obtain
\begin{align*}
&= \frac{1}{\lambda_j} \cdot \Big| \sum_{z \in Z_{n+1}} \lim_{N \rightarrow \infty} \frac{2^{-(n+1)d}}{N^d} \sum_{b \in \frac{2^{-(n+1)}}{N} \{ 0, ..., N-1 \}^d } \left \langle (D_{Z_{n+1}}(f) - D_{Z_n}(f))(z+b) k(\cdot, z+b), \varphi_j \right \rangle_{L_2(\Omega)} \Big| \\ 
&= \frac{1}{\lambda_j} \cdot \Big| \lim_{N \rightarrow \infty} \frac{2^{-(n+1)d}}{N^d} \sum_{b \in \frac{2^{-(n+1)}}{N} \{ 0, ..., N-1 \}^d } \left \langle \sum_{z \in Z_n} (D_{Z_{n+1}}(f) - D_{Z_n}(f))(z+b) k(\cdot, z+b), \varphi_j \right \rangle_{L_2(\Omega)} \Big| \\
&\leq \frac{1}{\lambda_j} \lim_{N \rightarrow \infty} \frac{2^{-(n+1)d}}{N^d} \sum_{b \in \frac{2^{-(n+1)}}{N} \{ 0, ..., N-1 \}^d } \Big| \left \langle \sum_{z \in Z_n} (D_{Z_{n+1}}(f) - D_{Z_n}(f))(z+b) k(\cdot, z+b), \varphi_j \right \rangle_{L_2(\Omega)} \Big|.
\end{align*}
From this we conclude by using Cauchy Schwarz inequality
\begin{align*}
&~~~\left| \langle D_{Z_{n+1}}(f) - D_{Z_n}(f), \varphi_j \rangle_{L_2(\Omega)} \right|^2 \\
&\leq \frac{1}{\lambda_j^2} \lim_{N \rightarrow \infty} \frac{2^{-2(n+1)d}}{N^{2d}} \left( \sum_{b \in \frac{2^{-(n+1)}}{N} \{ 0, ..., N-1 \}^d } \Big| \left \langle \sum_{z \in Z_{n+1}} (D_{Z_{n+1}}(f) - D_{Z_n}(f))(z+b) k(\cdot, z+b), \varphi_j \right \rangle_{L_2(\Omega)} \Big| \right)^2 \\
&= \frac{1}{\lambda_j^2} \lim_{N \rightarrow \infty} \frac{2^{-2(n+1)d}}{N^{2d}} \left( \sum_{b \in \frac{2^{-(n+1)}}{N} \{ 0, ..., N-1 \}^d } 1^2 \right) \\
&\qquad \quad \cdot \left( \sum_{b \in \frac{2^{-(n+1)}}{N} \{ 0, ..., N-1 \}^d} \Big| \left \langle \sum_{z \in Z_{n+1}} (D_{Z_{n+1}}(f) - D_{Z_n}(f))(z+b) k(\cdot, z+b), \varphi_j \right \rangle_{L_2(\Omega)} \Big|^2 \right) \\ 
&= \frac{1}{\lambda_j^2} \lim_{N \rightarrow \infty} \frac{2^{-2(n+1)d}}{N^{d}} \sum_{b \in \frac{2^{-(n+1)}}{N} \{ 0, ..., N-1 \}^d } \left| \left \langle \sum_{z \in Z_{n+1}} (D_{Z_{n+1}}(f) - D_{Z_n}(f))(z+b) k(\cdot, z+b), \varphi_j \right \rangle_{L_2(\Omega)} \right|^2 \\
&\leq \frac{1}{\lambda_j^2} \lim_{N \rightarrow \infty} \frac{2^{-2(n+1)d}}{N^{d}} \sum_{b \in \frac{2^{-(n+1)}}{N} \{ 0, ..., N-1 \}^d } \left| \left \langle \sum_{z \in Z_{n+1}} (D_{Z_{n+1}}(f) - D_{Z_n}(f))(z+b) k(\cdot, z+b), \varphi_j \right \rangle_{L_2(\Omega)} \right|^2.
\end{align*}
Thus we obtain
\begin{align*}
&~~~ \sum_{j=1}^{F(n)} \lambda_j^{2-\vartheta} \left| \langle D_{Z_{n+1}}(f) - D_{Z_n}(f), \varphi_j \rangle_{L_2(\Omega)} \right|^2 \\
&\leq \sum_{j=1}^{F(n)} \lambda_j^{-\vartheta} \lim_{N \rightarrow \infty} \frac{2^{-2(n+1)d}}{N^{d}} \sum_{b \in \frac{2^{-(n+1)}}{N} \{ 0, ..., N-1 \}^d } \left| \left \langle \sum_{z \in Z_{n+1}} (D_{Z_{n+1}}(f) - D_{Z_n}(f))(z+b) k(\cdot, z+b), \varphi_j \right \rangle_{L_2(\Omega)} \right|^2 \\
&\leq \lambda_{F(n)}^{-\vartheta} \lim_{N \rightarrow \infty} \frac{2^{-2(n+1)d}}{N^{d}} \sum_{b \in \frac{2^{-(n+1)}}{N} \{ 0, ..., N-1 \}^d } \sum_{j=1}^{F(n)} \left| \left \langle \sum_{z \in Z_{n+1}} (D_{Z_{n+1}}(f) - D_{Z_n}(f))(z+b) k(\cdot, z+b), \varphi_j \right \rangle_{L_2(\Omega)} \right|^2 \\
&\leq \lambda_{F(n)}^{-\vartheta} \lim_{N \rightarrow \infty} \frac{2^{-2(n+1)d}}{N^{d}} \sum_{b \in \frac{2^{-(n+1)}}{N} \{ 0, ..., N-1 \}^d } \left \Vert \sum_{z \in Z_{n+1}} (D_{Z_{n+1}}(f) - D_{Z_n}(f))(z+b) k(\cdot, z+b) \right \Vert_{L_2(\Omega)}^2
\end{align*}
The $\Vert \cdot \Vert_{L_2(\Omega)}$-norm can be estimated by inserting the definition of $D_{Z_{n+1}}(f)$ and $D_{Z_n}(f)$:
\begin{align*}
&~\left \Vert \sum_{z \in Z_{n+1}} (D_{Z_{n+1}}(f) - D_{Z_n}(f))(z+b) k(\cdot, z+b) \right \Vert_{L_2(\Omega)} \\
=&~\left \Vert \sum_{z \in Z_{n+1}} D_{Z_{n+1}}(f)(z+b)k(\cdot, z+b) - D_{Z_n}(f)(z+b) k(\cdot, z+b) \right \Vert_{L_2(\Omega)} \\
=&~\left \Vert \sum_{z \in Z_{n+1}} D_{Z_{n+1}}(f)(z+b)k(\cdot, z+b) 
- \sum_{b_i \in B_{n+1}} \sum_{z \in Z_n} D_{Z_n}(f)(z+b_i+b) k(\cdot, z+b_i+b) \right \Vert_{L_2(\Omega)} \\
=&~\left \Vert \sum_{z \in Z_{n+1}} 2^{(n+1)d} \alpha_{f, Z_{n+1}+b; z+b} k(\cdot, z+b) 
- \sum_{b_i \in B_{n+1}} \sum_{z \in Z_{n}} 2^{nd} \alpha_{f, Z_n+b_i+b; z+b_i+b} k(\cdot, z+b_i+b) \right \Vert_{L_2(\Omega)}
\end{align*}
\begin{align*}
=&~ 2^{nd} \left \Vert \sum_{z \in Z_{n+1}} 2^{d} \alpha_{f, Z_{n+1}+b; z+b} k(\cdot, z+b) 
- \sum_{b_i \in B_{n+1}} \sum_{z \in Z_{n}} \alpha_{f, Z_n+b_i+b; z+b_i+b} k(\cdot, z+b_i+b) \right \Vert_{L_2(\Omega)} \\
=&~ 2^{nd} \left \Vert \sum_{b_i \in B_{n+1}} \sum_{z \in Z_{n+1}} \alpha_{f, Z_{n+1}+b; z+b} k(\cdot, z+b) 
- \sum_{b_i \in B_{n+1}} \sum_{z \in Z_{n}} \alpha_{f, Z_n+b_i+b; z+b_i+b} k(\cdot, z+b_i+b) \right \Vert_{L_2(\Omega)} \\
=&~ 2^{nd} \left \Vert \sum_{b_i \in B_{n+1}} \left( \sum_{z \in Z_{n+1}} \alpha_{f, Z_{n+1}+b; z+b} k(\cdot, z+b) 
- \sum_{z \in Z_{n}} \alpha_{f, Z_n+b_i+b; z+b_i+b} k(\cdot, z+b_i+b) \right) \right \Vert_{L_2(\Omega)} \\
=&~ 2^{nd} \left \Vert \sum_{b_i \in B_{n+1}} \left( s_{f, Z_{n+1}+b}
- s_{f, Z_n + b_i + b} \right) \right \Vert_{L_2(\Omega)} \\
=&~ 2^{nd} \sum_{b_i \in B_{n+1}} \left \Vert s_{f, Z_{n+1}+b}
- s_{f, Z_n + b_i + b} \right \Vert_{L_2(\Omega)} \\
\leq&~ 2^{nd} \sum_{b_i \in B_{n+1}} \left( \left \Vert s_{f, Z_{n+1}+b} - f \Vert_{L_2(\Omega)} + \Vert f - s_{f, Z_n + b_i + b} \right \Vert_{L_2(\Omega)} \right).
\end{align*}
For this expression, we can finally apply the given convergence rate of Eq.~\eqref{eq:inverse_statement_assumption} and the estimates on the fill distance of \Cref{prop:estimate_sep_fill_dist_Z} to bound it via
\begin{align*}
\leq&~ 2^{nd} \sum_{b_i \in B_{n+1}} \left( c_f h_{Z_{n+1}+b}^\beta + c_f h_{Z_n+b_i+b}^\beta \right) \\
\leq&~ 2^{nd} \sum_{b_i \in B_{n+1}} \left( c_f C_\Omega^\beta 2^{-(n+1)\beta} + c_f C_\Omega^\beta 2^{-n\beta} \right) \\
\leq&~ 2 c_f C_\Omega^\beta 2^d \cdot 2^{nd} 2^{-n\beta}.
\end{align*}
Plugging this estimate into aboves calculation, we obtain
\begin{align*}
&~~~ \sum_{j=1}^{F(n)} \lambda_j^{2-\vartheta} \left| \langle D_{Z_{n+1}}(f) - D_{Z_n}(f), \varphi_j \rangle_{L_2(\Omega)} \right|^2 \\
&\leq \lambda_{F(n)}^{-\vartheta} \lim_{N \rightarrow \infty} \frac{2^{-2(n+1)d}}{N^{d}} \sum_{b \in \frac{2^{-(n+1)}}{N} \{ 0, ..., N-1 \}^d } C 2^{2nd} 2^{-2n\beta} \\
&\leq C\lambda_{F(n)}^{-\vartheta} \lim_{N \rightarrow \infty} \frac{1}{N^{d}} \sum_{b \in \frac{2^{-(n+1)}}{N} \{ 0, ..., N-1 \}^d } 2^{-2n\beta} \\
&= C \lambda_{F(n)}^{-\vartheta} 2^{-2n\beta}.
\end{align*}
\end{proof}

\end{document}